\documentclass[11pt]{article}
\usepackage[numbers,sort&compress]{natbib}
\usepackage{enumerate}
\usepackage{amscd}
\usepackage{amsmath}
\usepackage{latexsym}
\usepackage{amsfonts}
\usepackage{setspace}
\usepackage{amssymb}
\usepackage{amsthm}
\usepackage{verbatim}
\usepackage{mathrsfs}
\usepackage{enumerate}
\usepackage[hypertexnames=false]{hyperref}

\oddsidemargin .5cm \evensidemargin .5cm \marginparwidth 40pt
\marginparsep 10pt \topmargin 0.30cm
\headsep1pt
\headheight 0pt
\textheight 8.85in
\textwidth 6in
\sloppy

\setlength{\parskip}{11pt}

%

\theoremstyle{plain}
\theoremstyle{definition}\newtheorem{theorem}{Theorem}[section]
\theoremstyle{definition}\newtheorem{lemma}[theorem]{Lemma}
\theoremstyle{plain}
\theoremstyle{plain}\newtheorem{prop}[theorem]{Proposition}
\theoremstyle{definition}\newtheorem{remark}{Remark}[section]
\usepackage{xcolor}

\newcommand{\mr}{\mathbb{R}}
\newcommand{\mt}{\mathbb{T}}
\newcommand{\mz}{\mathbb{Z}}

\newcommand{\ben}{\begin{enumerate}}
	\newcommand{\een}{\end{enumerate}}

\newcommand{\dd}{\mathrm{~d}}
\newcommand{\w}{\widehat}
\newcommand{\tb}{\tilde{b} }

\newcommand{\Rmnum}[1]{\expandafter\@slowromancap\romannumeral #1@}

\allowdisplaybreaks

\numberwithin{equation}{section}
\begin{document}

	\title{Global stability and asymptotic behavior for the incompressible MHD equations without viscosity or magnetic diffusion}
\author{Qunyi Bie\,\,
Hui Fang\,\,
Yanping Zhou
}
\date{}
\maketitle
\begin{abstract}
Physical experiments and numerical simulations have revealed a remarkable stabilizing phenomenon: a background magnetic field stabilizes and dampens electrically conducting fluids. This paper provides a rigorous mathematical justification of this effect for the $n$-dimensional incompressible magnetohydrodynamic equations with partial diffusion on periodic domains. We establish the global stability and derive explicit decay rates for perturbations around an equilibrium magnetic field satisfying the Diophantine condition. Our results yield the \textit{effective decay rates in all intermediate Sobolev norms} and \textit{significantly relax the regularity requirements} on the initial data compared with previous works (\textit{Sci. China Math.} 41:1--10, 2022; \textit{J. Differ. Equ.} 374:267--278, 2023; \textit{Calc. Var. Partial Differ. Equ.} 63:191, 2024). Furthermore, the analytical framework developed here is dimension-independent and can be flexibly adapted to other fluid models with partial dissipation.
\end{abstract}
\noindent {\bf MSC(2020):}\quad 35A01, 35B35, 35Q35, 76E25, 76W05.
    \vskip 0.02cm
	\noindent {\bf Keywords:} Incompressible MHD equations, stability, decay estimates.


\newpage
{\small 
\tableofcontents
}

\vskip 0.3cm

\section{Introduction and main results}
\subsection{Model and related research}
The magnetohydrodynamics (MHD) equations describe the motion of electrically conducting fluids, such as plasmas, liquid metals, and electrolytes, and are fundamental in geophysics, astrophysics, and engineering (see, e.g., \cite{Biskamp1993,Priest-Forbes2000,Davidson2001,duvaut-1972}).
Mathematically, they share essential features with the Euler and Navier-Stokes equations, while exhibiting additional structures that give rise to rich phenomena and analytical challenges. In this paper, we consider the incompressible MHD system on the $n$-dimensional torus $\mathbb{T}^n$ ($n\ge2$):
\begin{align}\label{mhd}
\left\{
\begin{array}{l}
\partial_t u - \mu \Delta u + u \cdot \nabla u + \nabla p = b \cdot \nabla b, \\[1mm]
\partial_t b - \nu \Delta b + u \cdot \nabla b = b \cdot \nabla u, \\[1mm]
\nabla \cdot u = \nabla \cdot b = 0, \\[1mm]
u(x,0) = u_0(x), \quad b(x,0) = b_0(x),
\end{array}
\right.
\end{align}
where $u$ and $b$ denote the velocity and magnetic field, respectively, $p$ is the pressure, and $\mu,\nu\ge0$ are the viscosity and magnetic diffusion coefficients.
Since dissipative mechanisms fundamentally influence the long-time behavior of solutions, much of the mathematical literature on \eqref{mhd} has been devoted to the well-posedness problem under different dissipative regimes, as briefly reviewed below.

\medskip
\noindent\textbf{(i) Full dissipation:} $\mu>0$, $\nu>0$.  The existence and uniqueness results for both weak and strong solutions in 2D could be found in Duvaut and Lions \cite{duvaut-1972} (see also  \cite{sermange-1983,Abidi-Paicu2008}). However, the corresponding problem in 3D remains open.

\medskip
\noindent\textbf{(ii) Only magnetic diffusion:} $\mu=0$, $\nu>0$.
Lei and Zhou \cite{Lei-Zhou2009} established the existence of global $H^1(\mathbb{R}^2)$ weak solutions via compactness arguments (see also \cite{Cao-Wu2011}), while the smoothness and uniqueness of such weak solutions remain open.  A related but weaker result was later obtained by Jiu, Niu, Wu, Xu, and Yu \cite{Jiu-Niu-Wu-Xu-Yu2015}. When the initial datum $(u_0, b_0)$ is assumed to be small, the first global well-posedness result is established by Wei and Zhang \cite{zhang-2020-GlobalWellPosedness2D} in $H^4(\mathbb{T}^2)$, assuming $\int_{\mathbb{T}^2} b_0\,dx = 0$. Ye and Yin \cite{Ye-Yin2022} later reduced the regularity requirement to $H^s(\mathbb{T}^2)$ with $s>2$. Nevertheless, the solutions in \cite{zhang-2020-GlobalWellPosedness2D, Ye-Yin2022} may grow in time, and in particular $\|\nabla u\|_{L^\infty}$ can exhibit exponential growth. Consequently, the stability problem near the trivial solution remains open even in the periodic setting.

\medskip
\noindent\textbf{(iii) Only viscosity:} $\mu>0$, $\nu=0$.
It remains an open problem whether global classical solutions exist even in 2D for generic smooth initial data. Chemin, McCormick,  Robinson and Rodrigo
 \cite{Chemin2016} established local well-posedness in critical Besov spaces.

\medskip
\noindent\textbf{(iv) Vanishing or small dissipation:} $\mu=0$, $\nu=0$.
For the fully non-dissipative system, it is largely unknown whether classical solutions can develop finite-time singularities, even in 2D. Existing global well-posedness results are available only near nontrivial background magnetic fields (see \cite{Bardos-Sulem1988,Frisch1983,Kraichnan1965,He-Xu-Yu2018,
Cai-Lei2018,Wei-Zhang2017,Wei-Zhang2018}). Bardos, Sulem, and Sulem \cite{Bardos-Sulem1988} first proved global well-posedness for initial data close to a nontrivial equilibrium, observing that a sufficiently strong magnetic field suppresses nonlinear interactions and prevents the formation of strong gradients \cite{Frisch1983,Kraichnan1965}. Moreover, He, Xu, and Yu \cite{He-Xu-Yu2018} (see also \cite{Cai-Lei2018,Wei-Zhang2017}) justified the vanishing viscosity limit of the fully diffusive MHD system to the solutions constructed by \cite{Bardos-Sulem1988} under certain structural relations between $\mu$ and $\nu$.

\medskip
The above theoretical results are consistent with experimental and numerical evidence, both indicating that a sufficiently strong magnetic field tends to stabilize electrically conducting fluids (see, e.g., \cite{HAlfvn1942,Alemany-Frisch1979,Davidson1995,Davidson1997,Alexakis2011,Gallet-Mordant2009,Gallet-Doering2015}). Motivated by this phenomenon, the viscous but non-resistive case ($\mu>0,\nu=0$) attracted particular attention. An early conjecture suggested that a non-vanishing background field could dissipate energy at a rate independent of ohmic resistivity \cite{Califano19999,Majda1984}. Subsequently, rigorous results on global well-posedness were obtained for perturbations around strong background magnetic fields (see, e.g., \cite{lin-2014-GlobalSmallSolutions,Lin-Zhang2015simplified-proof,Lin-Xu-Zhang2015JDE,ren2014global,zhangTJDE2016,xu-2015-GlobalSmallSolutions,Abidi-Zhang2017,Pan-Zhou-Zhu2018,Deng2018zhang,jiang2021}). Parallel progress was achieved for the complementary non-viscous but resistive case ($\mu=0,\nu>0$) (see, e.g., \cite{zhou-2018-GlobalClassicalSolutions,Liu-Zhang2025Linear,Lin-Suo-Wu2025}).
Among these works, Deng and Zhang \cite{Deng2018zhang} derived explicit decay estimates for the MHD system in $\mathbb{R}^3$ with a strong background field $\bar{b}=(0,0,1)^T$ within the Lagrangian framework, revealing a clear dissipative mechanism. This naturally raises the question of whether a similar stabilization effect can be understood directly in Eulerian coordinates, which offer a simpler structure and avoid derivative loss.

In fact, beyond the strong background field $\bar{b}$, another important candidate is a constant magnetic field $\tilde{b}$ satisfying the so-called {\it Diophantine condition}, which has recently attracted considerable attention (see \cite{chen-2022-3dmhd-Diophant,zhai-2023-2dmhdstability-Diophant,XieJiu-CVPDE2024,Jiu-Liu-Xie-compressibleMHD,WuZhai,lizhai-2023-jga,Wang-Zhang-Zhai2024,Zhai-Wu-Xu2025,Dai-Lai-Zhai2025,jiang-2023-arma-nonresistive}).
\newline{\it Diophantine condition}: For every nonzero vector $k \in \mathbb{Z}^n$ and $r>n-1$, there exists a constant $c>0$ such that
\begin{align}\label{Diophantine}
	|\tilde{b}\cdot k|\geq \frac{c}{|k|^r}.
\end{align}
To see why this condition naturally arises, let us recall the proof of Deng and Zhang \cite{Deng2018zhang}, which was conducted in the Lagrangian framework. In Lagrangian coordinates, the perturbation system takes the form
\[
Y_{tt} - \Delta_y Y_t - \partial_{b_0}^2 Y = F(Y),
\]
with $\partial_{b_0} = b_0 \cdot \nabla_y$. A crucial step in their analysis was to introduce a coordinate transformation so that $\partial_{b_0}$ becomes $\tilde{b}\cdot\nabla$ when $b_0 - \tilde{b}$ is small and compactly supported. In the Eulerian setting on the torus $\mathbb{T}^n$, a similar straightening can be performed directly only if $\tilde{b}$ satisfies the {\it Diophantine condition} (see \cite{Alinhac}, p.~137), providing a natural motivation to study this class of background fields in the periodic setting. Mathematically, the {\it Diophantine condition} is satisfied for almost all $\tilde{b}\in\mathbb{R}^n$, while it fails if the components of $\tilde{b}$ are rational or one component vanishes \cite{chen-2022-3dmhd-Diophant}.
When the magnetic field $b$ is close to $\tilde{b}$, it suffices to consider the following perturbation system by writing $b$ for $b - \tilde{b}$:
\begin{align}\label{equation}
	\left\{
	\begin{array}{l}
		\partial_t u-\mu\Delta u+u \cdot \nabla u+\nabla p=\tilde{b} \cdot \nabla b+b \cdot \nabla b, \\[1mm]
		\partial_t b-\nu\Delta b+u \cdot \nabla b=\tilde{b} \cdot \nabla u+b \cdot \nabla u, \\[1mm]
		\nabla\cdot u=\nabla \cdot b=0, \\[1mm]
		u(x, 0)=u_0(x), \quad b(x, 0)=b_0(x).
	\end{array}
	\right.
\end{align}
For system \eqref{equation} with $\mu=1, \nu=0$ or $\mu=0, \nu=1$, Chen, Zhang, and Zhou \cite{chen-2022-3dmhd-Diophant} first studied stability in the 3D periodic domain $\mathbb{T}^3$, proving asymptotic stability in $H^{4r+7}(\mathbb{T}^3)$ for $r>n-1$,  which was then improved to $H^{(3 + 2\beta)r + 5 + (\alpha + 2\beta)}(\mathbb{T}^2)$ for 2D periodic domain $\mathbb{T}^2$ and any $\alpha > 0$, $\beta > 0$ by Zhai \cite{zhai-2023-2dmhdstability-Diophant}. Recently, Xie, Jiu, and Liu \cite{XieJiu-CVPDE2024} exploited a new dissipative mechanism to significantly lower the regularity requirement compared with the earlier works \cite{chen-2022-3dmhd-Diophant,zhai-2023-2dmhdstability-Diophant}, establishing stability in $H^{(3r+3)^+}(\mathbb{T}^n)$ for general $n$, addressing both linear and nonlinear stability with consistent decay rates. For brevity, we only recall the nonlinear result here.
\newline{\bf Theorem (Xie-Jiu-Liu,\,\cite{XieJiu-CVPDE2024})}
{\it Let $n\geq 2$, $r>n-1$ and $\mu=1$, $\nu=0$ or $\mu=0$, $\nu=1$. Assume that $(u_0,b_0)\in H^m(\mt^n)$ for $m>3(r+1)$ satisfying
\begin{align*}
	\int_{\mathbb{T}^n}u_0\,\dd x=\int_{\mathbb{T}^n}b_0\,\dd x=0,
\end{align*}
and there is a small constant $\varepsilon$ so that
\begin{align*}
	\|u_0\|_{H^m(\mt^n)}+\|b_0\|_{H^m(\mt^n)}\leq \varepsilon.
\end{align*}
Then there exists a unique and global solution $(u,b)\in C([0,\infty); H^m(\mt^n))$ of incompressible MHD equations \eqref{equation} such that for any $t\geq 0$,
\begin{align*}
	\|u(t)\|_{H^m(\mt^n)}+\|b(t)\|_{H^m(\mt^n)}\leq C\varepsilon,
\end{align*}
and
\begin{align}\label{2024finaldecay}
	\|u(t)\|_{H^s(\mt^n)}+\|b(t)\|_{H^s(\mt^n)}\leq C(1+t)^{-\frac{m-s}{2(r+1)}},\quad{\rm for\,\,\,any}\,\,r+1\leq s\leq m.
\end{align}}
Later on, Jiu, Liu and Xie \cite{Jiu-Liu-Xie-compressibleMHD} extended this framework to the compressible isentropic MHD equations without magnetic diffusion, obtaining analogous stability results in $H^{(3r+3)^+}(\mathbb{T}^n)$.
Further developments along this direction can be found in the studies of compressible MHD \cite{WuZhai,lizhai-2023-jga} and other related fluid models with Diophantine background fields \cite{Wang-Zhang-Zhai2024,Zhai-Wu-Xu2025,Dai-Lai-Zhai2025}.

Building on the above works, two natural questions arise:
\begin{itemize}
    \item Can the Sobolev regularity threshold on the initial data, namely $m>3(r+1)$ in \cite{XieJiu-CVPDE2024}, be further lowered while still ensuring global stability?
    \item Can the decay estimate \eqref{2024finaldecay}, which requires $s\geq r+1$, be further relaxed?
\end{itemize}
In this paper, we continue this line of research for system \eqref{equation} with $\mu>0$, $\nu=0$ or $\mu=0$, $\nu>0$, under a background magnetic field satisfying the {\it Diophantine condition}.
Our main novelty is twofold: first, we {\it greatly reduce} the Sobolev regularity required on the initial data for stability and decay; second, we establish decay rates of $(u,b)$ in $H^s$ for all $s\in[0,m]$, building on the idea of \cite{Elgindi2010,Jiang-Kim2023CVPDE}.
A discussion of the mechanisms behind these results is postponed to Section~\ref{mechanisms}.

\subsection{Major achievements}\label{Major}
We now present the main results of this paper. The first two theorems concern the linearized system of \eqref{equation} in the velocity diffusive case ($\mu=1, \nu=0$) and the magnetic diffusive case ($\mu=0, \nu=1$), respectively. We subsequently establish global existence and decay estimates for the nonlinear system.
\begin{theorem}[Linear stability I]\label{thm1}
Let \( n \geq 2 \), \(r > n-1\), and $\mu=1$, $\nu=0$.
Suppose that \( \tilde{b} \) satisfies the Diophantine condition, and that \( (U_0, B_0) \in H^m(\mathbb{T}^n) \) for some \( m \geq 0 \), with
\begin{align}\label{mean-zero}
	\int_{\mathbb{T}^n} U_0\, \mathrm{d}x = 0, \qquad
    \int_{\mathbb{T}^n} B_0\, \mathrm{d}x = 0.
\end{align}
Let \( (U, B) \) be the corresponding solution to the linear system
\begin{align}\label{lineartheorem}
	\left\{\begin{array}{l}
		\partial_t U-\Delta U=\tilde{b} \cdot \nabla B, \\[0.5ex]
		\partial_t B=\tilde{b} \cdot \nabla U, \\[0.5ex]
        \nabla\cdot U=\nabla \cdot B=0, \\[0.5ex]
		U(x, 0)=U_0(x), \quad B(x, 0)=B_0(x).
	\end{array}\right.
\end{align}
Then, for any \( s \in [0, m] \), the following decay estimates hold:
\begin{align*}
	\lVert U(t) \rVert_{H^s} &\leq C(1 + t)^{-\left( \frac{1}{2} + \frac{m - s + 1}{2(1 + r)} \right)} \lVert (U_0, B_0) \rVert_{H^m}, \\[1ex]
	\lVert B(t) \rVert_{H^s} &\leq C(1 + t)^{-\frac{m - s}{2(1 + r)}} \lVert (U_0, B_0) \rVert_{H^m}.
\end{align*}
\end{theorem}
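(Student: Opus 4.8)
The plan is to diagonalize \eqref{lineartheorem} in Fourier space. The conditions \eqref{mean-zero} and the divergence constraints are propagated by the linear flow, so it suffices to study, for each fixed $k\in\mathbb Z^n\setminus\{0\}$, the coefficient system
\begin{align*}
	\partial_t\widehat U_k+|k|^2\,\widehat U_k=\mathrm i\,(\tilde b\cdot k)\,\widehat B_k,\qquad
	\partial_t\widehat B_k=\mathrm i\,(\tilde b\cdot k)\,\widehat U_k,
\end{align*}
and to keep precise track of how the decay rate and the amplitude depend on $\alpha_k:=|k|^2$ and $\beta_k:=\tilde b\cdot k$, the latter being nonzero for all $k\neq0$ by \eqref{Diophantine}. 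I would also record the energy identity $\tfrac12\tfrac{d}{dt}\big(\|U\|_{H^s}^2+\|B\|_{H^s}^2\big)=-\|\nabla U\|_{H^s}^2$ (the coupling terms cancel because $\tilde b$ is constant), which gives uniform-in-time boundedness and settles the range $t\le1$ for free.

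At the heart is a sharp per-mode estimate, carried out on two frequency regimes. Split $\mathbb Z^n\setminus\{0\}$ into the weakly coupled set $\mathcal K_{\mathrm w}=\{k:\beta_k^2\le\alpha_k^2/4\}$, which is cofinite and carries the genuine polynomial rates, and the strongly coupled set $\mathcal K_{\mathrm s}$; since $|\beta_k|\le|\tilde b|\,|k|$, the latter consists of the finitely many $k$ with $|k|<2|\tilde b|$, for which the eigenvalues of the $2\times2$ system have real part $-\alpha_k/2\le-1/2$, so those modes decay faster than any polynomial and contribute nothing to the rate. On $\mathcal K_{\mathrm w}$ I would introduce the modified energy $\mathcal E_k=|\widehat U_k|^2+|\widehat B_k|^2+\eta_k\,\mathrm{Im}(\overline{\widehat U_k}\cdot\widehat B_k)$, where $\eta_k$ is a small constant multiple of $\beta_k/\alpha_k$; since $|\beta_k|/\alpha_k\le|\tilde b|$ the correction is controlled and $\mathcal E_k\simeq|\widehat U_k|^2+|\widehat B_k|^2$. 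Differentiating along the ODE, the cross term produces a damping of size $\beta_k^2/\alpha_k$ for $|\widehat B_k|^2$, and a Young inequality yields $\tfrac{d}{dt}\mathcal E_k\le-\alpha_k|\widehat U_k|^2-c\,(\beta_k^2/\alpha_k)|\widehat B_k|^2\le-\gamma_k\mathcal E_k$ with $\gamma_k\gtrsim\beta_k^2/\alpha_k$. Hence $|\widehat U_k(t)|^2+|\widehat B_k(t)|^2\lesssim e^{-\gamma_k t}\big(|\widehat U_k(0)|^2+|\widehat B_k(0)|^2\big)$, and by \eqref{Diophantine} one has $\gamma_k\gtrsim\beta_k^2/\alpha_k\gtrsim c^2|k|^{-2(r+1)}$ on $\mathcal K_{\mathrm w}$.

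To extract the \emph{extra} half power of $(1+t)$ for $U$, I would then revisit the $\widehat U_k$-equation as a damped ODE and apply Duhamel, feeding in the decay of $\widehat B_k$ just obtained: using $\gamma_k\le\alpha_k/2$ on $\mathcal K_{\mathrm w}$,
\begin{align*}
	|\widehat U_k(t)|^2
	&\lesssim|\widehat U_k(0)|^2e^{-\alpha_kt}+\frac{\beta_k^2}{\alpha_k}\int_0^te^{-\alpha_k(t-\tau)}e^{-\gamma_k\tau}\,d\tau\;\big(|\widehat U_k(0)|^2+|\widehat B_k(0)|^2\big)\\
	&\lesssim|\widehat U_k(0)|^2e^{-\alpha_kt}+\frac{\beta_k^2}{\alpha_k^2}\,e^{-\gamma_kt}\,\big(|\widehat U_k(0)|^2+|\widehat B_k(0)|^2\big).
\end{align*}
The decisive point is to read $\beta_k^2/\alpha_k^2=\alpha_k^{-1}\cdot(\beta_k^2/\alpha_k)$ as a \emph{derivative gain} $\alpha_k^{-1}=|k|^{-2}$ times a \emph{time gain}: since $\gamma_k\simeq\beta_k^2/\alpha_k$ and $\sigma e^{-\sigma t}\le(et)^{-1}$, one gets $(\beta_k^2/\alpha_k)\,e^{-\gamma_kt/2}\lesssim t^{-1}$, so that $|\widehat U_k(t)|^2\lesssim|\widehat U_k(0)|^2e^{-\alpha_kt}+\tfrac{1}{|k|^2t}\,e^{-\gamma_kt/2}\big(|\widehat U_k(0)|^2+|\widehat B_k(0)|^2\big)$ on $\mathcal K_{\mathrm w}$.

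Finally I would sum. Multiplying the per-mode bounds by $|k|^{2s}$, factoring out $|k|^{2m}$ and inserting $\gamma_k\gtrsim|k|^{-2(r+1)}$, everything reduces to the elementary inequality $\sup_{x\ge1}x^{-2a}e^{-\kappa tx^{-2(r+1)}}\lesssim(1+t)^{-a/(r+1)}$ for $a>0$ and fixed $\kappa>0$ (optimize in $x$; the maximizer is $x\simeq t^{1/(2(r+1))}$). Applied with $a=m-s$ this gives $\|B(t)\|_{H^s}\lesssim(1+t)^{-\frac{m-s}{2(1+r)}}\|(U_0,B_0)\|_{H^m}$, while applied with $a=m-s+1$ together with the prefactor $t^{-1}$ it gives $\|U(t)\|_{H^s}^2\lesssim t^{-1}(1+t)^{-\frac{m-s+1}{1+r}}\|(U_0,B_0)\|_{H^m}^2$ for $t\ge1$, i.e.\ exactly the claimed rate; the heat contributions $e^{-\alpha_kt}$ and the finitely many strongly coupled modes decay exponentially and are absorbed, and $t\le1$ follows from the energy identity. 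The step I expect to be the real obstacle is \emph{not} the coarse decay but pinning down the sharp amplitude $\beta_k^2/\alpha_k^2$ in the Duhamel step and splitting it correctly into the $|k|^{-2}$ and $t^{-1}$ factors, uniformly over all $k$ (including the transitional modes where $\beta_k^2\approx\alpha_k^2/4$); this is what makes the half-power improvement work, and it is also where the weak/strong coupling dichotomy has to be handled with care.
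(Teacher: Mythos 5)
Your argument is correct and reaches the same per-mode bounds as the paper, but by a genuinely different route. The paper (Propositions \ref{pro1} and \ref{pro2}) diagonalizes the $2\times2$ Fourier symbol exactly, writes $\hat U$ and $\hat B$ through explicit kernels $\widehat{K_1},\widehat{K_2},\widehat{K_3}$, and bounds these on a three-region split $S_1,S_2,S_3$; the source of the extra half-power for $U$ there is the factor $|\tilde b\cdot k|/|\lambda_+|\sim|\tilde b\cdot k|/|k|^2$ carried by $\widehat{K_2}$ on the high-frequency region $S_3$ and absent from $\widehat{K_1}$. You instead build a per-mode Lyapunov functional with a cross term tuned to $\beta_k/\alpha_k$ to extract the decay rate $\gamma_k\gtrsim\beta_k^2/\alpha_k$, and then bootstrap via Duhamel on the $\widehat U_k$-equation to reproduce the identical amplitude $\beta_k^2/\alpha_k^2$. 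Both proofs then finish by reading $\beta_k^2/\alpha_k^2=|k|^{-2}\cdot(\beta_k^2/\alpha_k)$, converting the second factor into a $t^{-1}$ gain, and optimizing $x\mapsto x^{-2a}e^{-\kappa t x^{-2(r+1)}}$ over $x\ge1$ (with $\gamma_k\gtrsim c^2|k|^{-2(r+1)}$ from the Diophantine condition) to get $(1+t)^{-a/(r+1)}$. Your two-region split $\mathcal K_{\mathrm w}\cup\mathcal K_{\mathrm s}$ is coarser than the paper's three regions but suffices since $\mathcal K_{\mathrm s}$ is finite and its modes decay exponentially. Each route has a virtue: the explicit-kernel approach is fully self-contained but must remove the singularity of the left eigenvectors near $\{|k|^4=4|\tilde b\cdot k|^2\}$, which the paper does by rewriting the Duhamel formula; your Lyapunov-plus-Duhamel approach avoids that singularity entirely and is arguably more portable to perturbed systems, at the cost of a second pass for $U$. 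In a polished write-up, make explicit that $\gamma_k\le\alpha_k/2$ holds uniformly on $\mathcal K_{\mathrm w}$ (your bound $\gamma_k\lesssim\beta_k^2/\alpha_k\le\alpha_k/4$ delivers this once the hidden constant is pinned by choosing the Lyapunov weight $\eta_k$ small), so that $\int_0^t e^{-\alpha_k(t-\tau)}e^{-\gamma_k\tau}\,d\tau\lesssim\alpha_k^{-1}e^{-\gamma_k t}$ in the Duhamel step is actually justified; this is the one place where the "transitional" modes near the boundary of $\mathcal K_{\mathrm w}$ need care, exactly as you anticipated.
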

\begin{theorem}[Linear stability II]\label{thm1.1}
Let \( n \geq 2 \), \(r > n-1\), and $\mu=0$, $\nu=1$.
Suppose that \( \tilde{b} \) satisfies the Diophantine condition, and that \( (U_0, B_0) \in H^m(\mathbb{T}^n) \) for some \( m \geq 0 \), with
\[
	\int_{\mathbb{T}^n} U_0\, \mathrm{d}x = 0, \qquad
    \int_{\mathbb{T}^n} B_0\, \mathrm{d}x = 0.
\]
Let \( (U, B) \) be the corresponding solution to the linear system
\begin{align}\label{lineartheorem2}
	\left\{\begin{array}{l}
		\partial_t U=\tilde{b} \cdot \nabla B, \\[0.5ex]
		\partial_t B-\Delta B=\tilde{b} \cdot \nabla U, \\[0.5ex]
		\nabla\cdot U=\nabla \cdot B=0, \\[0.5ex]
		U(x, 0)=U_0(x), \quad B(x, 0)=B_0(x).
	\end{array}\right.
\end{align}
Then, for any \( s \in [0, m] \), the following decay estimates hold:
\begin{align*}
\lVert U(t) \rVert_{H^s} &\leq C(1 + t)^{-\frac{m - s}{2(1 + r)}} \lVert (U_0, B_0) \rVert_{H^m}, \\[1ex]
	\lVert B(t) \rVert_{H^s} &\leq C(1 + t)^{-\left( \frac{1}{2} + \frac{m - s + 1}{2(1 + r)} \right)} \lVert (U_0, B_0) \rVert_{H^m}.
\end{align*}
\end{theorem}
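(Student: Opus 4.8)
\noindent\emph{Proof proposal.} The cleanest route is to reduce Theorem~\ref{thm1.1} to Theorem~\ref{thm1} via the exact $U\leftrightarrow B$ symmetry of the linearized system. If $(U,B)$ solves \eqref{lineartheorem2} with data $(U_0,B_0)$, put $(U',B'):=(B,U)$; then $\partial_t U'-\Delta U'=\partial_t B-\Delta B=\tilde b\cdot\nabla U=\tilde b\cdot\nabla B'$ and $\partial_t B'=\partial_t U=\tilde b\cdot\nabla B=\tilde b\cdot\nabla U'$, while $\nabla\cdot U'=\nabla\cdot B'=0$ and the mean-zero conditions persist, so $(U',B')$ is the solution of \eqref{lineartheorem} with data $(B_0,U_0)$. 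Since $\|(B_0,U_0)\|_{H^m}=\|(U_0,B_0)\|_{H^m}$, Theorem~\ref{thm1} applied to $(U',B')$ gives at once
\[
\|B(t)\|_{H^s}=\|U'(t)\|_{H^s}\le C(1+t)^{-\left(\frac12+\frac{m-s+1}{2(1+r)}\right)}\|(U_0,B_0)\|_{H^m},
\]
\[
\|U(t)\|_{H^s}=\|B'(t)\|_{H^s}\le C(1+t)^{-\frac{m-s}{2(1+r)}}\|(U_0,B_0)\|_{H^m},
\]
which is the claim.

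\noindent\emph{A self-contained argument.} If one prefers to keep the two theorems parallel, one repeats the Fourier scheme directly. For $k\in\mathbb{Z}^n\setminus\{0\}$ set $\beta=\tilde b\cdot k$, $\lambda=|k|^2$; since no term couples the vector components, each Fourier-coefficient pair solves $\partial_t\hat U=\mathrm{i}\beta\hat B$, $\partial_t\hat B=\mathrm{i}\beta\hat U-\lambda\hat B$, with roots $\mu_\pm=\tfrac12(-\lambda\pm\sqrt{\lambda^2-4\beta^2})$, $\mu_+\mu_-=\beta^2$. Split $\mathbb{Z}^n\setminus\{0\}=L\cup H$ with $H=\{|k|>K_0\}$, $K_0\sim\max(1,|\tilde b|)$ chosen so that $\lambda\ge3|\beta|$ on $H$, whence $\sqrt{\lambda^2-4\beta^2}\sim\lambda$, $|\mu_-|\sim\lambda$ and $\beta^2/\lambda\le|\mu_+|\le2\beta^2/\lambda$. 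On the finite set $L$ the Diophantine condition forces $\beta\neq0$, so each mode matrix has spectral abscissa $\le-\sigma_0<0$ (with a Jordan block only at $\lambda^2=4\beta^2$), and these modes contribute $\lesssim(1+t)^2e^{-2\sigma_0 t}\|(U_0,B_0)\|_{L^2}^2$ to every $H^s$-norm. On $H$, solving the $2\times2$ system explicitly (eigenvectors $(\mathrm{i}\beta,\mu_\pm)$, using $\mu_+\mu_-=\beta^2$) yields
\[
|\hat U(k,t)|\lesssim\big(|\hat U_0(k)|+|\hat B_0(k)|\big)e^{-\beta^2 t/\lambda},
\]
\[
|\hat B(k,t)|^2\lesssim\frac{\beta^2}{\lambda^2}\big(|\hat U_0(k)|^2+|\hat B_0(k)|^2\big)e^{-2\beta^2 t/\lambda}+\big(|\hat U_0(k)|^2+|\hat B_0(k)|^2\big)e^{-\lambda t}.
\]
The $U$-estimate follows by combining the Diophantine bound $\beta^2/\lambda\ge c^2|k|^{-2r-2}$ with $e^{-x}\le C_\theta x^{-\theta}$ ($\theta=\tfrac{m-s}{r+1}$), which gives $|k|^{2s}e^{-2\beta^2 t/\lambda}\lesssim(1+t)^{-\frac{m-s}{r+1}}|k|^{2m}$, then summing. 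For the $B$-estimate one writes $\beta^2/\lambda^2=|k|^{-2}\cdot(\beta^2/\lambda)$ and splits $e^{-2\beta^2 t/\lambda}=e^{-\beta^2 t/\lambda}e^{-\beta^2 t/\lambda}$: the factor $(\beta^2/\lambda)e^{-\beta^2 t/\lambda}=t^{-1}\big((\beta^2 t/\lambda)e^{-\beta^2 t/\lambda}\big)\lesssim(1+t)^{-1}$ uniformly in $k$ by $xe^{-x}\le e^{-1}$, while the second copy of $e^{-\beta^2 t/\lambda}$ together with $e^{-x}\le C_\theta x^{-\theta}$ ($\theta=\tfrac{m-s+1}{r+1}$) contributes $(1+t)^{-\frac{m-s+1}{r+1}}|k|^{2(m-s+1)}$; hence $|k|^{2s}|\hat B(k,t)|^2\lesssim(1+t)^{-1-\frac{m-s+1}{r+1}}|k|^{2m}\big(|\hat U_0|^2+|\hat B_0|^2\big)$, and summation gives the faster decay, the $e^{-\lambda t}$ tail and the $L$-modes being $O((1+t)^{-N})$ for every $N$.

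\noindent\emph{Main obstacle.} The only genuinely delicate point is the extra $(1+t)^{-1/2}$ that $B$ gains over $U$. In the direct argument it comes from spending the two exponential factors in $|\hat B(k,t)|^2$ separately: one on the scale-free bound $\sup_{x>0}xe^{-x}=e^{-1}$, which is what produces the additional $(1+t)^{-1}$ in the squared norm, and the other on the full Diophantine rate after trading $(m-s+1)/(r+1)$ derivatives — the ``$+1$'' there being exactly the one derivative gained from $\beta^2/\lambda^2=|k|^{-2}\cdot\beta^2/\lambda$. Through the symmetry reduction above, this subtlety is inherited verbatim from Theorem~\ref{thm1}.
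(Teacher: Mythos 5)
Your proposal is correct on both fronts, and the symmetry reduction deserves particular notice as a genuinely cleaner route than what the paper does.

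\textbf{The symmetry reduction.} You observe that the map $(U,B)\mapsto(U',B'):=(B,U)$ carries any solution of \eqref{lineartheorem2} with data $(U_0,B_0)$ to the solution of \eqref{lineartheorem} with data $(B_0,U_0)$, preserving divergence-freeness and the mean-zero conditions. Since $\|(B_0,U_0)\|_{H^m}=\|(U_0,B_0)\|_{H^m}$, Theorem~\ref{thm1} applied to $(U',B')$ yields Theorem~\ref{thm1.1} verbatim, with the roles of $U$ and $B$ in the decay rates swapped exactly as stated. The argument is airtight (uniqueness of the linear Cauchy problem guarantees that $(U',B')$ really is ``the'' solution of \eqref{lineartheorem}). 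The paper does not phrase the reduction this way: it dismisses the $\mu=0,\nu=1$ case as ``analogous'' and instead records Proposition~\ref{cor:mu0nu1}, which is precisely the representation \eqref{2.4} with the kernels $\widehat{K}_1$ and $\widehat{K}_2$ interchanged --- i.e.\ the same symmetry, encoded at the level of the integral representation rather than lifted to a one-line reduction of the theorem. Your formulation is more economical and makes the origin of the swapped rates transparent.

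\textbf{The self-contained argument.} This tracks the paper's proof of Theorem~\ref{thm1} (Section~\ref{2/3}) quite closely. The paper partitions $\mathbb{Z}^n\setminus\{0\}$ into three sets $S_1,S_2,S_3$ according to the sign and size of the discriminant $|k|^4-4|\tilde b\cdot k|^2$, with $S_1\cup S_2$ contained in a ball of radius $\sim|\tilde b|$ and $S_3$ being the essential ``far-field'' region; you use a cruder two-set split $L\cup H$ with $L$ a ball chosen so that the discriminant is uniformly positive on $H$. On $H$ (resp.\ $S_3$) both arguments combine the Diophantine lower bound $\beta^2/\lambda\ge c^2|k|^{-2r-2}$ with the inequality $y^\theta e^{-y}\le C_\theta$ to trade $\theta=\tfrac{m-s}{r+1}$ derivatives for polynomial decay, and the enhanced rate for the parabolically smoothed component is obtained by splitting $e^{-2\beta^2 t/\lambda}$ into two copies, spending one on $\sup_{x>0}xe^{-x}=e^{-1}$ (giving $(1+t)^{-1}$ in the squared norm) and the other on the Diophantine trade with the exponent shifted to $\tfrac{m-s+1}{r+1}$, the ``$+1$'' accounting for the extra factor $|k|^{-2}$ in $\beta^2/\lambda^2$ as compared to $\beta^2/\lambda$. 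This is exactly the computation the paper performs for the term $I_2$ in the proof of Theorem~\ref{thm1}. Your treatment of $L$ (finite set, spectral abscissa $\le-\sigma_0<0$ by Diophantine, at most one Jordan block) is a legitimate shortcut; the paper's finer $S_1$/$S_2$ split is not needed for the linear estimate, only for the explicit kernel bounds later reused in the nonlinear analysis. In short: correct, and a faithful (mildly streamlined) reconstruction of the paper's method.
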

\begin{remark}
Under the same zero-mean assumption, Xie, Jiu, and Liu \cite{XieJiu-CVPDE2024} proved that in the presence of either kinematic viscosity or magnetic diffusion,
\[
\|(U,B)(t)\|_{H^s(\mathbb{T}^n)}
   \leq C (1+t)^{-\frac{m-s}{2(1+r)}}
   \|(U_0,B_0)\|_{H^m(\mathbb{T}^n)},
   \quad 0 \leq s \leq m.
\]
Here, Theorem \ref{thm1} (with $\mu=1$, $\nu=0$) improves the decay of $U$ to
\[
\|U(t)\|_{H^s} \leq C (1+t)^{-\left(\tfrac{1}{2}+\tfrac{m-s+1}{2(1+r)}\right)} \|(U_0,B_0)\|_{H^m},
\]
while Theorem \ref{thm1.1} (with $\mu=0$, $\nu=1$) yields the same sharper rate for $B$. Therefore, in both degenerate cases, our results provide a faster algebraic decay than that of \cite{XieJiu-CVPDE2024}.
\end{remark}

Our next two theorems address the nonlinear MHD equations, establishing global well-posedness and time-decay estimates for both the magnetic diffusive case $(\mu,\nu)=(0,1)$ and the velocity diffusive case $(\mu,\nu)=(1,0)$. The regularity thresholds on the initial data are stated below, with a brief discussion of their differences postponed to Section \ref{mechanisms}.
\begin{theorem}[Nonlinear stability I]\label{thm.2}
	Let $n\geq 2$, $r>n-1$ and $\mu=0$, $\nu=1$. Let \(m \in \mathbb{N}\) satisfy
\begin{align}\label{m}
m > 4 + 2r + \frac{n}{2}.
\end{align}
Assume that \(\tilde{b}\) satisfies the Diophantine condition, and that the initial data \((u_0, b_0) \in H^m(\mathbb{T}^n)\) satisfy
\[
\mathrm{div}\, u_0 = \mathrm{div}\, b_0 = 0, \quad
\int_{\mathbb{T}^n} u_0\, \mathrm{d}x = \int_{\mathbb{T}^n} b_0\, \mathrm{d}x = 0,
\]
together with the smallness condition
\begin{align}\label{smallcondition}
	\|u_0\|_{H^m(\mt^n)}+\|b_0\|_{H^m(\mt^n)}\leq \varepsilon,
\end{align}
for some sufficiently small constant \(\varepsilon > 0\).
Then the incompressible MHD equations \eqref{equation} admit a unique global solution \((u, b)\) satisfying
\begin{align}\label{eq:1.2a}
\sup_{t \in [0, \infty)} \lVert (u, b)(t) \rVert_{H^m}+ \int_0^\infty \lVert \nabla b(t) \rVert_{H^m}^2 \,{d}t \leq C \lVert (u_0, b_0) \rVert_{H^m},
\end{align}
and the decay estimate
\begin{align}\label{finaldecay111}
\lVert (u, b)(t) \rVert_{H^s(\mt^n)} \leq C(1 + t)^{-\frac{m - s}{2(1 + r)}} \quad \text{for any } s \in [0, m].
\end{align}
\end{theorem}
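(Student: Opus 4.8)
The plan is to run a continuity argument in which a top‑order energy estimate and a scale of decay estimates are closed simultaneously: decay of the low Sobolev norms will supply the time‑integrability of $\|\nabla(u,b)\|_{L^\infty}$ needed to bound the $H^m$ norm, while the uniform $H^m$ bound will feed back, by interpolation, into decay rates for \emph{all} intermediate norms. First, since for $(\mu,\nu)=(0,1)$ system~\eqref{equation} couples an incompressible‑Euler‑type equation for $u$ with a parabolic equation for $b$, a standard energy/fixed‑point argument gives a unique local solution $(u,b)\in C([0,T^\ast);H^m(\mathbb T^n))$ preserving the divergence‑free and mean‑zero constraints, with the continuation criterion that it extends past $T^\ast$ provided $\sup_{[0,T^\ast)}\|(u,b)\|_{H^m}<\infty$ and $\int_0^{T^\ast}(\|\nabla u\|_{L^\infty}+\|\nabla b\|_{L^\infty})\,dt<\infty$. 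It therefore suffices to prove \eqref{eq:1.2a}--\eqref{finaldecay111} a priori on any existence interval; global existence will then follow from the continuity argument applied to
\[
X(T)=\sup_{t\in[0,T]}\|(u,b)(t)\|_{H^m}+\sup_{t\in[0,T]}(1+t)^{\frac{m}{2(1+r)}}\|(u,b)(t)\|_{L^2},
\]
under the bootstrap hypothesis $X(T)\le 2C_0\varepsilon$, which we aim to improve to $X(T)\le C_0\varepsilon$.

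\textbf{Energy estimate.} Next I would apply $\partial^\alpha$ ($|\alpha|\le m$) to \eqref{equation}, pair with $(\partial^\alpha u,\partial^\alpha b)$ and sum: the pressure drops out by $\nabla\cdot u=0$; the constant‑coefficient terms $\tilde b\cdot\nabla b$ and $\tilde b\cdot\nabla u$ are skew‑adjoint and their contributions cancel; $-\Delta b$ gives $+2\|\nabla b\|_{H^m}^2$; and the quadratic terms, of transport/divergence type, are handled by commutator (Kato--Ponce) and product estimates together with $H^{m-1}(\mathbb T^n)\hookrightarrow W^{1,\infty}(\mathbb T^n)$ (valid as $m-1>1+\tfrac{n}{2}$). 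This yields
\[
\frac{d}{dt}\|(u,b)\|_{H^m}^2+2\|\nabla b\|_{H^m}^2\le C\big(\|\nabla u\|_{L^\infty}+\|\nabla b\|_{L^\infty}\big)\|(u,b)\|_{H^m}^2 .
\]
Since there is no dissipation on $u$, by itself this only gives $\|(u,b)(t)\|_{H^m}^2\le\|(u_0,b_0)\|_{H^m}^2\exp\!\big(C\!\int_0^t(\|\nabla u\|_{L^\infty}+\|\nabla b\|_{L^\infty})\big)$; making this exponent $O(\varepsilon)$ is exactly what the decay step must provide.

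\textbf{Hidden dissipation and decay.} To capture the stabilizing effect of $\tilde b$ I would extract a hidden anisotropic dissipation for $u$: substituting $\tilde b\cdot\nabla u=\partial_t b-\Delta b+u\cdot\nabla b-b\cdot\nabla u$ (from the $b$‑equation) into the time derivative of the interaction functional
\[
\mathcal{J}(t)=\int_{\mathbb T^n}\Lambda^{m-1}u\cdot\Lambda^{m-1}(\tilde b\cdot\nabla)(-\Delta)^{-1}b\,\mathrm dx
\]
produces the good term $-\|(\tilde b\cdot\nabla)(-\Delta)^{-1/2}u\|_{\dot H^{m-1}}^2$, which by the Diophantine condition \eqref{Diophantine} bounds $c^2\|u\|_{\dot H^{m-1-r}}^2$ from below, the remaining linear terms being cubic or absorbable into $\|\nabla b\|_{H^m}^2$ after the mean‑zero Poincar\'e inequality. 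Hence for small $\eta$ the modified energy $\widetilde{E}:=\|(u,b)\|_{H^m}^2+\eta\,\mathcal{J}$ is comparable to $\|(u,b)\|_{H^m}^2$ and obeys
\[
\frac{d}{dt}\widetilde{E}+c\big(\|\nabla b\|_{H^m}^2+\|u\|_{\dot H^{m-1-r}}^2\big)\le C\big(\|\nabla(u,b)\|_{L^\infty}+\|(u,b)\|_{L^\infty}\big)\big(\|(u,b)\|_{H^m}^2+\|\nabla b\|_{H^m}^2\big),
\]
the loss of $r+1$ derivatives in the velocity dissipation (the $r$ of them from the Diophantine condition) being unavoidable. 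For the decay I would combine this with the $L^2$ energy identity $\tfrac{d}{dt}\|(u,b)\|_{L^2}^2+2\|\nabla b\|_{L^2}^2=0$ (the nonlinearity being energy‑conserving) and run a Fourier‑splitting / Schonbek‑type argument that relies on the linear decay of Theorem~\ref{thm1.1}, on the divergence form $\nabla\!\cdot(\text{quadratic})$ of the nonlinearity (so that $\|\text{quadratic}\|_{L^1}\lesssim\|(u,b)\|_{L^2}^2$ furnishes a frequency‑localized bound), and on reinjecting the bootstrapped decay of $\|(u,b)\|_{L^2}$ inside the Duhamel term; this closes the $L^2$‑decay half of the bootstrap,
\[
\|(u,b)(t)\|_{L^2}\le C\varepsilon\,(1+t)^{-\frac{m}{2(1+r)}} .
\]
Interpolating with the $H^m$ bound --- the device behind the improvements of Elgindi~\cite{Elgindi2010} and Jiang and Kim~\cite{Jiang-Kim2023CVPDE} --- then gives $\|(u,b)(t)\|_{H^s}\le\|(u,b)(t)\|_{L^2}^{1-s/m}\|(u,b)(t)\|_{H^m}^{s/m}\le C\varepsilon(1+t)^{-\frac{m-s}{2(1+r)}}$ for every $s\in[0,m]$, which is \eqref{finaldecay111}.

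\textbf{Closing, and the main obstacle.} Finally, choosing $\sigma$ with $1+\tfrac{n}{2}<\sigma<m-2(1+r)$ --- possible exactly because $m>4+2r+\tfrac{n}{2}$ --- one has $\|\nabla(u,b)(t)\|_{L^\infty}\lesssim\|(u,b)(t)\|_{H^\sigma}\le C\varepsilon(1+t)^{-\frac{m-\sigma}{2(1+r)}}$ with exponent $>1$, hence $\int_0^\infty(\|\nabla u\|_{L^\infty}+\|\nabla b\|_{L^\infty})\,dt\le C\varepsilon$; inserting this into the energy estimate and applying Gr\"onwall gives $\sup_t\|(u,b)(t)\|_{H^m}\le\|(u_0,b_0)\|_{H^m}e^{C\varepsilon}\le C_0\varepsilon$ for $\varepsilon$ small, which closes the bootstrap, while time‑integrating the $\widetilde{E}$ inequality yields $\int_0^\infty\|\nabla b(t)\|_{H^m}^2\,dt\le C\varepsilon$, completing \eqref{eq:1.2a}. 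I expect the hard part to be precisely the absence of dissipation for the velocity: the only smoothing one can manufacture for $u$ is the Diophantine‑degraded, $(r+1)$‑derivative‑weaker quantity $\|u\|_{\dot H^{m-1-r}}$, which can never absorb the top‑order transport nonlinearities, so those must be paid for through the time‑integrability of $\|\nabla(u,b)\|_{L^\infty}$ --- that is, through the decay estimate itself --- and the decay rates in turn must be established despite the $\sim|k|^{2(r+1)}$ loss suffered whenever the degenerate semigroup acts on a source. Reconciling the $r$ derivatives lost to the Diophantine condition, the derivative lost to the divergence‑form nonlinearity, and the $\tfrac{n}{2}$ lost to Sobolev embedding is exactly what pins down the threshold $m>4+2r+\tfrac{n}{2}$.
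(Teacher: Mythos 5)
Your outline has the right large‑scale shape --- a top‑order energy estimate that dissipates only $b$, a cross‑term functional producing hidden dissipation for $u$ degraded by $r+1$ derivatives through the Diophantine condition, low‑norm decay, and interpolation against the $H^m$ bound --- but it substitutes an older, incompletely specified mechanism at exactly the step the paper singles out as its main novelty. The paper controls $\int_0^T\bigl(\|\nabla u\|_{L^\infty}+\|\nabla b\|_{L^\infty}\bigr)\,dt$ \emph{directly in frequency space}: it bounds $\sum_{k\neq0}\int_0^T|k|\,|\hat u(t,k)|\,dt$ and $\sum_{k\neq0}\int_0^T|k|\,|\hat b(t,k)|\,dt$ via Duhamel, Proposition~\ref{cor:mu0nu1} and the kernel estimates of Proposition~\ref{pro2}, and closes by a smallness bootstrap (Proposition~\ref{prop6.1a}); the summability of $|k|^{4+2r-m}$ needed there is precisely what produces the threshold $m>4+2r+\tfrac{n}{2}$, and the bound is obtained \emph{before and independently of} any decay estimate. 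You instead route through Sobolev embedding $\|\nabla(u,b)\|_{L^\infty}\lesssim\|(u,b)\|_{H^\sigma}$ combined with decay of $H^\sigma$ --- the strategy of the earlier works \cite{chen-2022-3dmhd-Diophant,zhai-2023-2dmhdstability-Diophant,XieJiu-CVPDE2024} that the paper explicitly sets out to replace.

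That substitution introduces a circularity your sketch does not resolve. The $L^2$‑decay argument itself requires $\int_0^\infty\|\nabla(u,b)\|_{L^\infty}\,dt<\infty$ as an input (the Gr\"onwall step in the proof of Proposition~\ref{prop6.6} uses exactly this), while that integrability is what you propose to deduce \emph{from} the decay. A joint bootstrap could in principle break the cycle, but the decay half of it is only named --- ``Fourier‑splitting / Schonbek‑type'' --- not carried out, and Schonbek's growing ball has no direct periodic analogue since all mean‑zero frequencies satisfy $|k|\ge1$; the paper instead carries this out with the time‑weighted modified energy $F(t)=A\|(u,b)\|_{L^2}^2-\int_{\mathbb{T}^n}(\tilde b\cdot\nabla u)\cdot\Lambda^{-2}b\,dx$, the auxiliary cross‑term estimate of Proposition~\ref{prop3.21}, and the parameter $M=A+\tfrac{c^*t}{8j}$. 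Moreover your threshold accounting is internally inconsistent: the construction you describe needs only $1+\tfrac{n}{2}<\sigma<m-2(1+r)$, i.e.\ $m>3+2r+\tfrac{n}{2}$ (the threshold of the viscous Theorem~\ref{thm}, not of this one), and the extra unit you ascribe to ``one derivative lost to the divergence‑form nonlinearity'' never actually appears in your argument; in the paper it comes from the $|k|^{4+2r}$ weight in the term $I_{92}$ of the frequency‑side estimate. As written, the proposal neither recovers the paper's key mechanism nor correctly accounts for the stated hypothesis on $m$.
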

\begin{theorem}[Nonlinear stability II]\label{thm}
	Let $n\geq 2$, $r>n-1$ and $\mu=1$, $\nu=0$. Let \(m \in \mathbb{N}\) satisfy
\begin{align}\label{m1}
m > 3 + 2r + \frac{n}{2}.
\end{align}
Assume that \(\tilde{b}\) satisfies the Diophantine condition, and that the initial data \((u_0, b_0) \in H^m(\mathbb{T}^n)\) satisfy
\[
\mathrm{div}\, u_0 = \mathrm{div}\, b_0 = 0, \quad
\int_{\mathbb{T}^n} u_0\, \mathrm{d}x = \int_{\mathbb{T}^n} b_0\, \mathrm{d}x = 0,
\]
and
\begin{align}\label{smallconditionq}
	\|u_0\|_{H^m(\mt^n)}+\|b_0\|_{H^m(\mt^n)}\leq \varepsilon,
\end{align}
for some sufficiently small constant \(\varepsilon > 0\).
Then the incompressible MHD equations \eqref{equation} admit a unique global classical solution \((u, b)\) satisfying
\begin{align}\label{eq:1.2}
\sup_{t \in [0, \infty)} \lVert (u, b)(t) \rVert_{H^m}+ \int_0^\infty \lVert \nabla u(t) \rVert_{H^m}^2 \,{d}t
\leq C \lVert (u_0, b_0) \rVert_{H^m},
\end{align}
and the decay estimate
\begin{align}\label{finaldecay}
\lVert (u, b)(t) \rVert_{H^s(\mt^n)} \leq C(1 + t)^{-\frac{m - s}{2(1 + r)}} \quad \text{for any } s \in [0, m].
\end{align}
\end{theorem}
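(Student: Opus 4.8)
We prove Theorem~\ref{thm} in three steps: (a) local well-posedness together with a uniform-in-time energy estimate, giving global existence and \eqref{eq:1.2}; (b) an $L^2$-level energy estimate which, combined with a Sobolev interpolation, yields the sharp decay of $\|(u,b)(t)\|_{L^2}$; (c) interpolation between $L^2$ and $H^m$ to obtain \eqref{finaldecay} for every $s\in[0,m]$. Throughout write $\Lambda:=(-\Delta)^{1/2}$; since the equations \eqref{equation} are in divergence form the solution keeps zero spatial mean, so $\|\Lambda^{\sigma}\cdot\|_{L^2}$ is an equivalent Sobolev norm on the relevant spaces and Poincar\'e's inequality gives $\|\nabla f\|_{L^2}\ge\|f\|_{L^2}$ for mean-zero $f$. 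For \emph{Step (a)}: since $m>3+2r+\tfrac n2>1+\tfrac n2$, one has $H^m(\mathbb T^n)\hookrightarrow W^{1,\infty}$, so a standard parabolic-regularization / fixed-point scheme produces a unique local classical solution on a maximal interval $[0,T_\ast)$, with $T_\ast$ controlled from below by $\|(u_0,b_0)\|_{H^m}$, and uniqueness follows from an $L^2$ estimate on the difference of two solutions. The only dissipative term is $-\Delta u$, supplying $\|\nabla u\|_{H^m}^2$. To generate dissipation for the non-diffusive field $b$ I use the modified energy
\[
\mathcal E_m(t)=\tfrac12\|(u,b)(t)\|_{H^m}^2+\kappa\,\big\langle \Lambda^{m-1}u,\ \Lambda^{m-1}(\tilde b\cdot\nabla b)\big\rangle ,
\]
comparable to $\|(u,b)(t)\|_{H^m}^2$ for $\kappa$ small (note $\tilde b\cdot\nabla b\in H^{m-1}$ since $b\in H^m$). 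Differentiating and substituting both equations of \eqref{equation}, the term $\langle\Lambda^{m-1}(\tilde b\cdot\nabla b),\Lambda^{m-1}(\tilde b\cdot\nabla b)\rangle=\|\tilde b\cdot\nabla b\|_{H^{m-1}}^2$ appears with the good sign, and the Diophantine condition \eqref{Diophantine} converts it, frequency by frequency, into $\|\tilde b\cdot\nabla b\|_{H^{m-1}}^2\ge c^2\|b\|_{H^{m-1-r}}^2$; the remaining linear cross term $\langle\Lambda^{m-1}\Delta u,\Lambda^{m-1}(\tilde b\cdot\nabla b)\rangle=-\langle\Lambda^{m}u,\Lambda^{m-1}(\tilde b\cdot\nabla b)\rangle$ is absorbed via $\|\Lambda^{m}u\|_{L^2}\le\|\nabla u\|_{H^m}$, and the pressure drops because $\nabla\cdot u=0$ and $\tilde b\cdot\nabla b$ is divergence-free. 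Estimating the trilinear terms by Kato--Ponce and commutator inequalities together with $H^m\hookrightarrow W^{1,\infty}$ yields
\[
\tfrac{d}{dt}\mathcal E_m+c_0\big(\|\nabla u\|_{H^m}^2+\|b\|_{H^{m-1-r}}^2\big)\le C\,\|(u,b)\|_{H^m}\big(\|\nabla u\|_{H^m}^2+\|b\|_{H^{m-1-r}}^2\big),
\]
and a continuity argument based on \eqref{smallconditionq} forces $\sup_t\mathcal E_m(t)\lesssim\varepsilon^2$ and $\int_0^\infty(\|\nabla u\|_{H^m}^2+\|b\|_{H^{m-1-r}}^2)\,dt\lesssim\varepsilon^2$, extending the solution globally and proving \eqref{eq:1.2}.

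\emph{Step (b).} I repeat the construction at the $L^2$ level, following the idea of \cite{Elgindi2010,Jiang-Kim2023CVPDE}. For the plain energy one computes $\tfrac{d}{dt}\|(u,b)\|_{L^2}^2=-2\|\nabla u\|_{L^2}^2$: the linear coupling cancels by skew-symmetry of $\tilde b\cdot\nabla$ and the quadratic terms cancel by divergence-freeness, so at this level $\|\nabla u\|_{L^2}^2$ is the only dissipation and there is no nonlinearity at all. To create dissipation for $b$ I add the negative-order cross term $\kappa\langle\Lambda^{-1}u,\Lambda^{-1}(\tilde b\cdot\nabla b)\rangle$; it is coercive (the extra $\Lambda^{-2}$ makes $|\langle\Lambda^{-1}u,\Lambda^{-1}(\tilde b\cdot\nabla b)\rangle|\le|\tilde b|\,\|u\|_{L^2}\|b\|_{L^2}$) and the degenerate term $\langle\Lambda^{-1}\Delta u,\Lambda^{-1}(\tilde b\cdot\nabla b)\rangle=-\langle\Lambda u,\Lambda^{-1}(\tilde b\cdot\nabla b)\rangle$ is now absorbed precisely by $\|\nabla u\|_{L^2}^2$. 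Writing $\mathcal E_0(t)\sim\|(u,b)(t)\|_{L^2}^2$ for the resulting modified energy, and fixing a small $\delta>0$, one obtains
\[
\tfrac{d}{dt}\mathcal E_0+c_1\|\nabla u\|_{L^2}^2+c_2\|b\|_{\dot{H}^{-1-r}}^2\le C\,\|(u,b)\|_{H^{n/2+\delta}}^2\,\|(u,b)\|_{L^2}^2 ,
\]
the right-hand side arising only from the trilinear remainders of the cross term. The decisive point is to close this as a genuine decay estimate: on the one hand $\|\nabla u\|_{L^2}^2\ge\|u\|_{L^2}^2$; on the other, interpolating $L^2$ between $\dot{H}^{-1-r}$ and $H^m$ and using $\|b\|_{H^m}\le C\varepsilon$,
\[
\|b\|_{L^2}^2\le C\,\|b\|_{\dot{H}^{-1-r}}^{\,2\theta}\|b\|_{H^m}^{\,2(1-\theta)},\qquad \theta=\tfrac{m}{m+1+r},
\]
so that $\|b\|_{\dot{H}^{-1-r}}^2\gtrsim\varepsilon^{-2(1+r)/m}\|b\|_{L^2}^{\,2+2(1+r)/m}$; the same interpolation bounds $\|(u,b)\|_{H^{n/2+\delta}}$, so the right-hand side above is $\lesssim\varepsilon^{2(n/2+\delta)/m}\,\mathcal E_0^{\,2-(n/2+\delta)/m}$, which (because $m>3+2r+\tfrac n2$ and $\varepsilon$ is small) is absorbed by the dissipation. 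This reduces the inequality to the autonomous form $\tfrac{d}{dt}\mathcal E_0\le -c\,\varepsilon^{-2(1+r)/m}\mathcal E_0^{\,1+(1+r)/m}$, whose integration gives $\|(u,b)(t)\|_{L^2}\le C\varepsilon\,(1+t)^{-\frac{m}{2(1+r)}}$, matching the linear prediction of Theorem~\ref{thm1}.

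\emph{Step (c).} For any $s\in[0,m]$ the interpolation inequality $\|(u,b)(t)\|_{H^s}\le C\|(u,b)(t)\|_{L^2}^{1-s/m}\|(u,b)(t)\|_{H^m}^{s/m}$ together with Steps (a)--(b) yields
\[
\|(u,b)(t)\|_{H^s}\le C\varepsilon\,(1+t)^{-\frac{m}{2(1+r)}\left(1-\frac sm\right)}=C\varepsilon\,(1+t)^{-\frac{m-s}{2(1+r)}},
\]
which is exactly \eqref{finaldecay}.

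The main obstacle is Step (b). The negative-order cross term must be chosen with the exact power of $\Lambda^{-1}$ that makes it simultaneously coercive and compatible with the single-derivative dissipation $\|\nabla u\|_{L^2}^2$; and the only surviving nonlinearity—produced by the time derivative of that cross term—must, after the Sobolev interpolation, be dominated by the manufactured dissipation $\|b\|_{\dot{H}^{-1-r}}^2$. This domination requires $\tfrac{n/2+\delta}{m}+\tfrac{1+r}{m}\le 1$ with a quantitative margin, which is precisely the source of the threshold $m>3+2r+\tfrac n2$ in \eqref{m1}; in the magnetic-diffusive situation of Theorem~\ref{thm.2} the dissipation $-\Delta b$ must instead be paired, through an analogous cross term, with the quadratic term $b\cdot\nabla b$ sitting in the \emph{non-dissipative} momentum equation, which costs one more derivative and raises the threshold to $m>4+2r+\tfrac n2$ in \eqref{m}. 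A secondary, routine difficulty is arranging the trilinear estimates in Step (a) so that every term carries the small factor $\|(u,b)\|_{H^m}$, which the chosen form of $\mathcal E_m$ makes possible.
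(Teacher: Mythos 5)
Your Step (a) contains a genuine gap that also propagates into Step (b). You claim the $H^m$-level modified energy closes in the form
\[
\frac{d}{dt}\mathcal E_m+c_0\big(\|\nabla u\|_{H^m}^2+\|b\|_{H^{m-1-r}}^2\big)\le C\,\|(u,b)\|_{H^m}\big(\|\nabla u\|_{H^m}^2+\|b\|_{H^{m-1-r}}^2\big),
\]
but this is not achievable. The Kato--Ponce commutator coming from $u\cdot\nabla b$ produces a term of the type $\|\nabla u\|_{L^\infty}\,\|\Lambda^m b\|_{L^2}^2$, and there is \emph{no} dissipation of $\|\Lambda^m b\|_{L^2}^2$ available in the $(\mu,\nu)=(1,0)$ regime: the manufactured dissipation $\|b\|_{H^{m-1-r}}^2$ is $1+r$ derivatives weaker, so this term cannot be absorbed, whether one uses smallness (it only gives $\varepsilon\|\Lambda^m b\|_{L^2}^2$ with nothing to absorb it into) or Young's inequality. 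What the paper actually proves (Propositions~\ref{prop4.2}--\ref{prop3.211a}) is an inequality that, upon integrating, reads $E_m^2(T)\le C\|(u_0,b_0)\|_{H^m}^2 + CE_m^4(T) + CE_m^2(T)\int_0^T\|\nabla u\|_{L^\infty}\,dt$, and the entire point of Section~\ref{s:6} is that closing this requires showing $\int_0^\infty\|\nabla u(t)\|_{L^\infty}\,dt<\infty$ by a \emph{separate, frequency-space argument} (Propositions~\ref{prop5.1}--\ref{prop5.2}): one bounds $\sum_{k\neq 0}\int_0^T|k|\,|\hat u(t,k)|\,dt$ and $\sum_{k\neq 0}\int_0^T|\hat b(t,k)|\,dt$ using the kernel estimates of Proposition~\ref{pro2} and closes via bootstrap. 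This Fourier-side control is the central new ingredient of the paper, and your proposal omits it entirely.

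A related symptom is that your claimed source of the threshold, $\tfrac{n/2+\delta}{m}+\tfrac{1+r}{m}\le 1$, yields $m>1+r+\tfrac n2$, not the paper's $m>3+2r+\tfrac n2$. The true threshold comes from the term $J_6$ in Proposition~\ref{prop5.2}, where one must sum $|k|^{3+2r}|\widehat N(t,k)|$ over the high-frequency region $S_3$ and the factor $|k|^{3+2r}$ must be affordable inside $H^m$, giving $m>3+2r+\tfrac n2$. Additionally, in Step (b) the right-hand side $\|(u,b)\|_{H^{n/2+\delta}}^2\|(u,b)\|_{L^2}^2$ is quartic, whereas the remainders from differentiating the cross term are cubic; the paper's corresponding bound has the form $(\|\nabla u\|_{L^\infty}+\|\nabla b\|_{L^\infty})\|(u,b)\|_{L^2}^2$ (see Propositions~\ref{prop3.21}, \ref{prop6.6}), and --- as in Step (a) --- the Lyapunov decay inequality is ultimately closed by Gr\"onwall using the finiteness of $\int_0^\infty\|\nabla u\|_{L^\infty}\,dt$ obtained in Section~\ref{s:6}, not by interpolation-based absorption. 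Your Step (c) (interpolation between $L^2$ decay and the $H^m$ bound) is fine and matches the spirit of the paper's Section~\ref{4/3} for the other case; for the present theorem the paper instead runs the time-weighted Lyapunov argument directly on $Q_s$ for every $s\in[0,m]$, but this is a cosmetic difference. The substantive gap is the missing control of $\int_0^\infty\|\nabla u\|_{L^\infty}\,dt$.
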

\begin{remark}
We compare our regularity assumptions with those in \cite{XieJiu-CVPDE2024}, where the condition $m > 3(r+1)$ was imposed.

\smallskip
(1) Theorem \ref{thm.2} (pure magnetic diffusion) requires
$
m > 4 + 2r + \tfrac{n}{2},\, r > n-1.
$
Taking into account that $m \in \mathbb{N}$, for low dimensions $n=2,3$, our regularity threshold covers that of \cite{XieJiu-CVPDE2024} when
$r \in [\tfrac{4}{3}, \tfrac{3}{2}) \cup [\tfrac{5}{3}, +\infty)$ for $n=2$, and
$r \in (2, \tfrac{9}{4}) \cup [\tfrac{7}{3}, +\infty)$ for $n=3$;
for higher dimensions $n \ge 4$, our result completely includes that of \cite{XieJiu-CVPDE2024}$\!$ and is less restrictive in general.

\smallskip
(2) Theorem \ref{thm} (pure viscosity) further relaxes the condition to
$
m > 3 + 2r + \tfrac{n}{2},
$
under which our regularity assumption again covers that of \cite{XieJiu-CVPDE2024} for all $n \ge 2$.

\smallskip
In summary, our results not only encompass the previous regularity conditions in \cite{XieJiu-CVPDE2024},
but also yield improved Sobolev thresholds for the initial data in all dimension $n \ge 2$, with the advantage becoming more pronounced as either the spatial dimension $n$ or the index $r$ increases.
\end{remark}

\begin{remark}
Compared with \cite{XieJiu-CVPDE2024}, where the decay estimates \eqref{2024finaldecay} are valid for Sobolev indices \(r+1 \le s \le m\), our results fill the gap and extend the decay analysis to {\it all Sobolev spaces} \(H^s\) with \(0 \le s \le m\), thereby providing a complete picture of temporal decay from $L^2$ up to $H^m$.
\end{remark}

\begin{remark}
A distinctive feature of the degenerate MHD system is also revealed: the $L^2$ decay rate is enhanced for higher-order initial data, while the $H^m$ decay rate remains unchanged. This behavior is in sharp contrast with classical parabolic equations, for which higher Sobolev norms typically decay faster.
\end{remark}
\begin{remark}
Theorems \ref{thm.2} and \ref{thm} hold for all $n \ge 2$, where the dimension $n$ influences both the minimal regularity and the decay rates.
\end{remark}
\begin{remark}
The approach developed here is robust and dimension-uniform, providing decay estimates and asymptotic stability in Sobolev spaces of much lower regularity. It can be adapted to other partially dissipative PDEs on the periodic box $\mathbb{T}^n$.
\end{remark}
\begin{remark}
We believe that it is a challenging problem to drop the {\it Diophantine condition} \eqref{Diophantine} in our theorems.
\end{remark}

\subsection{On reduced regularity and decay mechanisms}\label{mechanisms}
In the following, we explain how we can {\it greatly reduce} the Sobolev regularity required for stability and establish decay rates of $(u,b)$ in $H^s$ for all $s \in [0,m]$, compared with previous works \cite{chen-2022-3dmhd-Diophant,zhai-2023-2dmhdstability-Diophant,XieJiu-CVPDE2024}.
For clarity, we focus on the magnetic diffusive case $\mu=0$ and $\nu=1$ (Theorem \ref{thm.2}); the velocity-diffusive case $\mu=1$, $\nu=0$ can be treated similarly.
Finally, we briefly discuss the origin of the different regularity thresholds in Theorems \ref{thm.2} and \ref{thm} (see \eqref{m} and \eqref{m1}) and why Theorem \ref{thm} allows for lower-order initial data.

A key step in establishing global stability of
\eqref{equation} with only magnetic diffusion in the framework of $H^m(\mathbb{T}^n)$, the most important point is how to control the quantity
\begin{equation}\label{ana1}
 \int_{0}^{\infty}\big(\|\nabla u(\tau)\|_{L^\infty(\mathbb{T}^n)}
   +\|\nabla b(\tau)\|_{L^\infty(\mathbb{T}^n)}\big)\,\dd \tau,
\end{equation}
which appears in the energy inequality \eqref{energy1'} and the relaxation of the regularity requirement on $m$ precisely originates from this control.

Previous works \cite{chen-2022-3dmhd-Diophant,zhai-2023-2dmhdstability-Diophant,XieJiu-CVPDE2024} derived decay estimates for \eqref{equation} using the classical Sobolev embedding
\begin{equation*}
 H^s(\mathbb{T}^n) \hookrightarrow L^\infty(\mathbb{T}^n), \qquad s>\tfrac n2,
\end{equation*}
which implies
\begin{align*}
\lVert(\nabla u, \nabla b)\rVert_{L^{\infty}} \leq C \lVert(u, b) \rVert_{H^{s}}, \quad \text{with } \,s > 1+ \tfrac{n}{2}.
\end{align*}
Thus, controlling \eqref{ana1} reduces to choosing a Sobolev index $s>\frac{n}{2}+1$ such that the corresponding decay rate exceeds $(1+t)^{-1}$. In particular, Xie, Jiu, and Liu \cite{XieJiu-CVPDE2024} established the temporal decay estimate
\begin{align*}
 \|u(t)\|_{H^{r+1}(\mathbb{T}^n)}+\|b(t)\|_{H^{r+1}(\mathbb{T}^n)}
 \leq C(1+t)^{-\frac{m-(r+1)}{2(r+1)}},
\end{align*}
which requires $m>3(1+r)$ to achieve decay faster than $(1+t)^{-1}$, lowering the Sobolev regularity threshold compared with earlier works \cite{chen-2022-3dmhd-Diophant,zhai-2023-2dmhdstability-Diophant}. However, the result in \cite{XieJiu-CVPDE2024} is valid only for $s\ge r+1$ and provides no decay information below this level.

Here, instead of relying on Sobolev embedding, we analyze the key quantity \eqref{ana1} in the frequency space:
\begin{align*}
\int_0^T \big(\|\nabla u(t)\|_{L^\infty} + \|\nabla b(t)\|_{L^\infty}\big)\,dt
\lesssim \sum_{k\neq 0}\int_0^T |k|\,|\hat u(t,k)|\,dt
+ \sum_{k\neq 0}\int_0^T |k|\,|\hat b(t,k)|\,dt.
\end{align*}
Exploiting the system structure, the Diophantine condition, periodicity, Fourier analysis, and a standard bootstrap argument, we obtain that for $m>4 + 2r + \frac{n}{2}$,
\begin{align*}
\int_0^T \big(\|\nabla u(t)\|_{L^\infty} + \|\nabla b(t)\|_{L^\infty}\big)\,dt
\leq C \|(u_0,b_0)\|_{H^m}, \quad \forall\, T>0,
\end{align*}
see Proposition \ref{prop6.1a} for details. Moreover, to extend decay rates of $(u,b)$ in $H^s$ for all $s \in [0, m]$, we employ a Poincar\'{e}-type inequality induced by the Diophantine condition on $\tilde{b}$ (Lemma \ref{2.1}), which ensures decay for low Sobolev norms. Combining this with the boundedness of high norms from the global existence result then yields decay across the entire range $s \in [0,m]$.
Finally, we clarify why the regularity threshold \eqref{m1} in Theorem \ref{thm} is lower than that in Theorem \ref{thm.2} (see \eqref{m}). In the velocity diffusive case $(\mu,\nu)=(1,0)$, it suffices to control only
\[
 \int_{0}^{\infty}\|\nabla u(\tau)\|_{L^\infty(\mathbb{T}^n)}\,\dd \tau,
\]
since the magnetic field $b$ can be effectively controlled through its coupling with $u$. As a result, the initial data can be taken in a lower-order Sobolev space (see Propositions \ref{prop5.1} and \ref{prop5.2}).

We emphasize that these mechanisms not only account for the reduced regularity thresholds and decay results, but also illuminate the intrinsic anisotropic and degenerate structures of the incompressible MHD system under the Diophantine condition.

\subsection{Organization of the paper}
The remainder of this paper is organized as follows. Section \ref{2} collects preliminary lemmas and tools, including Fourier multiplier estimates and properties of the Diophantine condition. Section \ref{sec:kernel} provides an integral representation of the solution via spectral analysis, with the frequency space decomposed into subdomains to obtain detailed kernel estimates. Section \ref{2/3} establishes linear stability, while Sections \ref{s:7} and \ref{s:6} study the nonlinear problem in the magnetic diffusion $(\mu,\nu)=(0,1)$ and velocity diffusion $(\mu,\nu)=(1,0)$ regimes, respectively. In both cases, uniform energy bounds are derived, yielding global existence, stability, and time-decay estimates for the solutions.

\section{Preliminaries}\label{2}
In this section we collect some analytic tools and inequalities used in the subsequent analysis. We begin with the spatial averages of the solution $(u,b)$.
\begin{lemma}\label{lem:mean}
Let $(u,b)$ be a smooth solution to \eqref{equation} that satisfies
\[
\operatorname{div} u_0 = \operatorname{div} b_0 = 0, \quad \int_{\mathbb{T}^n} u_0\, {d}x = \int_{\mathbb{T}^n} b_0\, {d}x = 0.
\]
Then, for all \( t \geq 0 \), it holds that
\begin{equation}\label{meanzero2}
\int_{\mathbb{T}^n} u(x,t)\, {d}x = \int_{\mathbb{T}^n} b(x,t)\, {d}x = \w{u}(0,t)=\w{b}(0,t)=0.
\end{equation}
\begin{proof}
We integrate the first equation in \eqref{equation} over \( \mathbb{T}^n \):
\[
\frac{d}{dt} \int_{\mathbb{T}^n} u\,dx - \mu \int_{\mathbb{T}^n} \Delta u\,dx + \int_{\mathbb{T}^n} u \cdot \nabla u\,dx + \int_{\mathbb{T}^n} \nabla p\,dx = \int_{\mathbb{T}^n} \tilde{b} \cdot \nabla b\,dx + \int_{\mathbb{T}^n} b \cdot \nabla b\,dx.
\]
By integration by parts, the divergence-free condition, and the periodic boundary conditions, we obtain
\[
\int_{\mathbb{T}^n} \Delta u \, dx =\int_{\mathbb{T}^n} u \cdot \nabla u \, dx = \int_{\mathbb{T}^n} \nabla P \, dx = \int_{\mathbb{T}^n} \tilde{b} \cdot \nabla b \, dx = \int_{\mathbb{T}^n} b \cdot \nabla b \, dx = 0.
\]
Hence,
\[
\frac{d}{dt} \int_{\mathbb{T}^n} u\,dx = 0.
\]
It follows that
\[
\int_{\mathbb{T}^n} u(x,t)\,dx = \int_{\mathbb{T}^n} u_0(x)\,dx=0.
\]
Similarly, integrating the second equation in \eqref{equation} and using the same arguments, we obtain
\[
\int_{\mathbb{T}^n} b(x,t)\,dx = \int_{\mathbb{T}^n} b_0(x)\,dx = 0.
\]
This completes the proof.
\end{proof}
\end{lemma}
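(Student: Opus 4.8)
The plan is to exploit the conservation structure of \eqref{equation} on the torus: apart from $\partial_t u$ (resp.\ $\partial_t b$), every term in the momentum and induction equations is a total spatial divergence, hence integrates to zero against the periodic measure. Concretely, I would integrate the first equation of \eqref{equation} over $\mathbb{T}^n$ and treat the five remaining terms one by one. The dissipative term $\mu\Delta u=\mu\,\nabla\cdot(\nabla u)$ integrates to zero by the divergence theorem on the closed manifold $\mathbb{T}^n$. The transport term rewrites as $u\cdot\nabla u=\nabla\cdot(u\otimes u)$ because $\operatorname{div} u=0$ (which holds for all $t$, being part of system \eqref{equation}), so its integral vanishes; likewise the pressure gradient $\nabla p$ and the Lorentz term $b\cdot\nabla b=\nabla\cdot(b\otimes b)$ (using $\operatorname{div} b=0$) have zero mean. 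The only point that is not completely automatic is the background-field term $\tilde b\cdot\nabla b$: since $\tilde b$ is a \emph{constant} vector, $(\tilde b\cdot\nabla b)_i=\sum_j\tilde b_j\partial_j b_i=\sum_j\partial_j(\tilde b_j b_i)$ is again a divergence, so it too integrates to zero.

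Assembling these facts gives $\frac{d}{dt}\int_{\mathbb{T}^n} u(x,t)\,dx=0$, and integrating in time from $0$ yields $\int_{\mathbb{T}^n} u(x,t)\,dx=\int_{\mathbb{T}^n} u_0\,dx=0$ by the hypothesis on the initial data. The identical computation applied to the induction equation — with the nonlinear terms written as $u\cdot\nabla b=\nabla\cdot(b\otimes u)$ and $\tilde b\cdot\nabla u+b\cdot\nabla u=\nabla\cdot(\tilde b\otimes u+u\otimes b)$, and again using that $\tilde b$ is constant — shows $\frac{d}{dt}\int_{\mathbb{T}^n} b(x,t)\,dx=0$ and hence $\int_{\mathbb{T}^n} b(x,t)\,dx=0$ for all $t\ge0$. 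Finally, since the zeroth Fourier coefficient of a function on $\mathbb{T}^n$ is a fixed multiple of its spatial average, the vanishing of the means is exactly the assertion $\w{u}(0,t)=\w{b}(0,t)=0$, which is \eqref{meanzero2}.

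The argument needs only enough regularity of $(u,b)$ to differentiate under the integral sign and to integrate by parts — $C^1$ in time with values in, say, $H^1(\mathbb{T}^n)$ suffices — and this is guaranteed by the standing assumption that $(u,b)$ is a smooth solution. I do not expect a genuine obstacle here; the one thing to keep track of is that the constancy of $\tilde b$ is precisely what turns $\tilde b\cdot\nabla b$ and $\tilde b\cdot\nabla u$ into exact divergences, so that the background-field coupling does not destroy the conservation of the spatial average.
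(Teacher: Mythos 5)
Your proposal is correct and follows essentially the same route as the paper: integrate each equation over $\mathbb{T}^n$, observe that every non-time-derivative term is a total divergence (using $\operatorname{div} u = \operatorname{div} b = 0$ and the constancy of $\tilde b$), conclude the spatial means are conserved, and invoke the zero-mean initial data. The only quibble is a harmless index-convention slip in your tensor rewriting $\tilde b\cdot\nabla u + b\cdot\nabla u = \nabla\cdot(\tilde b\otimes u + u\otimes b)$ (the first summand should be $u\otimes\tilde b$ under the convention you use elsewhere), but this does not affect the argument since each term is manifestly a divergence.
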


With the mean-zero condition in hand, we now state the following Poincar\'{e}-type inequality under a Diophantine condition, which plays a key role in our setting with directional transport effects.
\begin{lemma}{\rm (\cite{chen-2022-3dmhd-Diophant,Jiu-Liu-Xie-compressibleMHD})}\label{2.1}
Let $\tb\in\mr^n$ be a given vector satisfying the Diophantine condition \eqref{Diophantine}. For any $s\in\mr$, there exists a constant $c$ such that if $f\in H^{s+r+1(\mathbb{T}^n)}$ satisfies $\int_{\mathbb{T}^n}f\dd x=0$, then
	\begin{align}\label{D1}
		\|f\|_{H^s(\mathbb{T}^n)}\leq c \|\tilde{b}\cdot\nabla f\|_{H^{s+r}(\mathbb{T}^n)}.
	\end{align}
\end{lemma}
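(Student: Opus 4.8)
The plan is to work on the Fourier side, where the Diophantine condition becomes a pointwise lower bound on the symbol of the operator $\tilde b\cdot\nabla$. Writing $f=\sum_{k\neq 0}\hat f(k)e^{ik\cdot x}$ (the sum excludes $k=0$ by the mean-zero hypothesis), we have $\widehat{\tilde b\cdot\nabla f}(k)=\mathrm{i}(\tilde b\cdot k)\hat f(k)$, so that
\begin{align*}
\|\tilde b\cdot\nabla f\|_{H^{s+r}(\mathbb{T}^n)}^2
=\sum_{k\neq 0}|k|^{2(s+r)}\,|\tilde b\cdot k|^2\,|\hat f(k)|^2 .
\end{align*}
The Diophantine condition \eqref{Diophantine} gives $|\tilde b\cdot k|\ge c|k|^{-r}$ for all $k\neq 0$, hence $|k|^{2(s+r)}|\tilde b\cdot k|^2\ge c^2|k|^{2(s+r)}|k|^{-2r}=c^2|k|^{2s}$. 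Substituting this bound term by term yields
\begin{align*}
\|\tilde b\cdot\nabla f\|_{H^{s+r}(\mathbb{T}^n)}^2
\ge c^2\sum_{k\neq 0}|k|^{2s}|\hat f(k)|^2
= c^2\|f\|_{H^s(\mathbb{T}^n)}^2 ,
\end{align*}
which is exactly \eqref{D1} after taking square roots and renaming the constant.

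A couple of bookkeeping points should be addressed for a clean writeup. First, one must be careful about the convention for the $H^s$ norm on $\mathbb{T}^n$: since all Fourier modes involved satisfy $|k|\ge 1$, the homogeneous and inhomogeneous norms $\|\cdot\|_{\dot H^s}$ and $\|\cdot\|_{H^s}$ are equivalent on the mean-zero subspace, so it does not matter which is used; I would simply state that the $H^s$ norm is taken as $\big(\sum_{k\neq 0}|k|^{2s}|\hat f(k)|^2\big)^{1/2}$ on mean-zero functions, or note the equivalence explicitly. Second, the regularity assumption $f\in H^{s+r+1}(\mathbb{T}^n)$ in the hypothesis guarantees that $\tilde b\cdot\nabla f\in H^{s+r}$ and that all the series above converge absolutely, so the manipulations are legitimate; in fact the inequality shows a posteriori that $f\in H^s$ whenever $\tilde b\cdot\nabla f\in H^{s+r}$, and one could phrase the lemma that way.

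This argument is essentially immediate once set up on the Fourier side, so there is no serious obstacle — the ``hard part'' is only the triviality that the exponent arithmetic $2(s+r)-2r=2s$ must line up with the decay rate $|k|^{-r}$ in \eqref{Diophantine}, which is precisely why the loss of exactly $r$ derivatives (and not fewer) is forced. Since the statement is already attributed to \cite{chen-2022-3dmhd-Diophant,Jiu-Liu-Xie-compressibleMHD}, I would keep the proof to the three displayed lines above plus a sentence on the norm convention, rather than elaborating further.
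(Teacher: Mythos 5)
Your proof is correct and follows essentially the same Plancherel-plus-Diophantine argument as the paper; the only cosmetic difference is that you work with the homogeneous weight $|k|^{2s}$ and invoke equivalence with $(1+|k|^2)^s$ on mean-zero functions, whereas the paper works directly with the inhomogeneous weight and uses $(1+|k|^2)^{s+r}|k|^{-2r}\ge(1+|k|^2)^s$ for $|k|\ge1$.
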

\begin{proof}
According to Plancherel's theorem and \eqref{Diophantine}, it holds that
\begin{align*}
	\|\tilde{b}\cdot\nabla f\|_{H^{s+r}}^2&=\sum_{k\in\mz^n}(1+|k|^2)^{s+r}|\tilde{b}\cdot k|^2|\hat{f}(k)|^2\\
	&=\sum_{k\in\mz^n\backslash\{0\}}(1+|k|^2)^{s+r}|\tilde{b}\cdot k|^2|\hat{f}(k)|^2\\
	&\geq c\sum_{k\in\mz^n\backslash\{0\}}(1+|k|^2)^{s+r}|k|^{-2r}|\hat{f}(k)|^2\\
	&\geq c\sum_{k\in\mz^n\backslash\{0\}}(1+|k|^2)^{s}|\hat{f}(k)|^2\\
	&=c\|f\|^2_{H^s},
\end{align*}
which completes the proof.
\end{proof}

The following lemma records a useful property of fractional operators acting on zero-mean functions.
\begin{lemma}\label{lem:zero-mean-fractional}
Let \( u \in \mathcal{S}'(\mathbb{T}^n) \) be a tempered distribution with zero mean, i.e.
\[
\int_{\mathbb{T}^n} u(x)\,\mathrm{d}x = 0.
\]
Assume the Fourier convention on the torus $\mathbb{T}^n=[0,2\pi]^n$ given by
\[
\hat u(k)=\frac{1}{(2\pi)^n}\int_{\mathbb{T}^n} u(x)e^{-i k\cdot x}\,\mathrm{d}x,\qquad
u(x)=\sum_{k\in\mathbb Z^n}\hat u(k)e^{i k\cdot x}.
\]
If the fractional Laplacian \(\Lambda^s\) is defined in the Fourier sense by
\[
\widehat{\Lambda^s u}(k)=|k|^{s}\,\hat u(k),
\]
then for any \( s\in\mathbb R\), we have
\[
\int_{\mathbb{T}^n} \Lambda^s u(x)\,\mathrm{d}x = 0.
\]
\end{lemma}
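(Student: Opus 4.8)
The plan is to argue directly at the level of Fourier coefficients, exploiting the fact that the zero-mean hypothesis is exactly the statement that the $k=0$ mode vanishes. First I would observe that by definition of $\Lambda^s$ on the torus, for every $k\in\mathbb Z^n$ we have $\widehat{\Lambda^s u}(k)=|k|^s\,\hat u(k)$, where the multiplier $|k|^s$ is interpreted with the usual convention $|0|^s=0$ whenever the $k=0$ mode is irrelevant; the hypothesis $\int_{\mathbb T^n}u\,\mathrm dx=0$ translates, via the stated Fourier convention, into $\hat u(0)=0$, so the possibly ambiguous value of $|0|^s$ never enters. Then the mean of $\Lambda^s u$ is, again by the Fourier convention on $\mathbb T^n=[0,2\pi]^n$, exactly $(2\pi)^n\,\widehat{\Lambda^s u}(0)=(2\pi)^n\,|0|^s\,\hat u(0)=0$.

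In slightly more detail, the key steps in order are: (1) recall $\widehat{\Lambda^s u}(k)=|k|^s\hat u(k)$ and note that only the behaviour at $k=0$ matters for computing a mean; (2) rewrite the zero-mean assumption as $\hat u(0)=\frac{1}{(2\pi)^n}\int_{\mathbb T^n}u(x)\,\mathrm dx=0$; (3) conclude $\widehat{\Lambda^s u}(0)=|0|^s\,\hat u(0)=0$, where if one is uneasy about $|0|^s$ for $s<0$ one simply restricts the Fourier series of $u$ (hence of $\Lambda^s u$) to $k\in\mathbb Z^n\setminus\{0\}$ from the outset, so $\Lambda^s u=\sum_{k\ne 0}|k|^s\hat u(k)e^{ik\cdot x}$ and this series has no constant term by construction; (4) integrate termwise over $\mathbb T^n$, using $\int_{\mathbb T^n}e^{ik\cdot x}\,\mathrm dx=0$ for $k\ne 0$, to get $\int_{\mathbb T^n}\Lambda^s u(x)\,\mathrm dx=(2\pi)^n\,\widehat{\Lambda^s u}(0)=0$. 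One should also note that $\Lambda^s u$ is again a well-defined element of $\mathcal S'(\mathbb T^n)$ — since $u$ is tempered its Fourier coefficients grow at most polynomially, and multiplying by $|k|^s$ preserves this — so the integral (i.e.\ the pairing with the constant test function $1$) makes sense.

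The only genuine subtlety, and hence the ``main obstacle,'' is purely a matter of bookkeeping: the symbol $|k|^s$ is not defined at $k=0$ when $s<0$ (and is $0$ there when $s>0$), so one must be careful that the definition of $\Lambda^s$ on zero-mean distributions is consistent. This is resolved exactly as above — on the subspace of zero-mean distributions $\Lambda^s$ acts by $\hat u(k)\mapsto |k|^s\hat u(k)$ for $k\ne 0$ and the $k=0$ mode is left at $0$ — so $\Lambda^s u$ automatically inherits the zero-mean property. No estimates, embeddings, or the Diophantine condition are needed; the statement is essentially a tautology once the Fourier-side description is unwound, and the proof will be only a few lines.
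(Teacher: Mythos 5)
Your argument is correct and follows essentially the same route as the paper's proof: both reduce the claim to the observation that $\hat u(0)=0$, so the Fourier series of $\Lambda^s u$ runs only over $k\ne 0$, and termwise integration using $\int_{\mathbb{T}^n}e^{ik\cdot x}\,\mathrm dx=0$ for $k\ne 0$ gives the result. Your additional remarks about the $|0|^s$ convention for $s<0$ and about $\Lambda^s u$ remaining tempered are sound clarifications but do not change the substance.
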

\begin{proof}
The zero-mean condition implies \(\hat u(0)=0\), so
\[
u(x)=\sum_{k\in\mathbb Z^n\setminus\{0\}}\hat u(k)e^{i k\cdot x}.
\]
Applying the definition of \(\Lambda^s\) yields
\[
\Lambda^s u(x)=\sum_{k\in\mathbb Z^n\setminus\{0\}}|k|^s\hat u(k)e^{i k\cdot x}.
\]
Integrating term by term gives
\[
\int_{\mathbb{T}^n}\Lambda^s u(x)\,\mathrm{d}x
   =\sum_{k\in\mathbb Z^n\setminus\{0\}}|k|^s\hat u(k)
     \int_{\mathbb{T}^n} e^{i k\cdot x}\,\mathrm{d}x.
\]
Since \(\int_{\mathbb{T}^n} e^{i k\cdot x}\,\mathrm{d}x=0\) for every \(k\ne0\), each term in the above sum vanishes, and hence
\[
\int_{\mathbb{T}^n}\Lambda^s u(x)\,\mathrm{d}x=0.
\]
This completes the proof.
\end{proof}

Finally, we recall a commutator estimate and a calculus
inequality, which will be used to control nonlinear terms in the energy estimates and can be found in \cite{kato-1988-CommutatorEstimatesEuler, kenig}.
\begin{lemma}\label{jiaohuanzi}
Let \( s > 0 \), \( 1 \leq p, p_1, p_2, q_1, q_2 \leq \infty \) and \( \frac{1}{p} = \frac{1}{p_1} + \frac{1}{q_1} = \frac{1}{p_2} + \frac{1}{q_2} \). Then, there exists an absolutely positive constant \( C \) such that
\begin{itemize}
    \item for any \( f \in W^{1,p_1} \cap W^{s,q_2} \) and \( g \in L^{p_2} \cap W^{s-1,q_1} \),
    \[
    \| \Lambda^s (fg) - f \Lambda^s g \|_{L^p} \leq C \big( \| \nabla f \|_{L^{p_1}} \| \Lambda^{s-1} g \|_{L^{q_1}} + \| g \|_{L^{p_2}} \| \Lambda^s f \|_{L^{q_2}} \big);
    \]
    \item for \( f \in L^{p_1} \cap W^{s,q_2} \) and \( g \in L^{p_2} \cap W^{s,q_1} \),
    \[
    \| \Lambda^s (fg) \|_{L^p} \leq C \big( \| f \|_{L^{p_1}} \| \Lambda^s g \|_{L^{q_1}} + \| g \|_{L^{p_2}} \| \Lambda^s f \|_{L^{q_2}} \big).
    \]
\end{itemize}
\end{lemma}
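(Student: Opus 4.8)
The plan is to establish both inequalities by the classical Littlewood--Paley/paraproduct method of Kato--Ponce and Kenig--Ponce--Vega, adapted to the torus. On $\mathbb{T}^n$ one may either run the argument directly with the periodic dyadic projections $\Delta_j,S_j$ (whose $L^p$ theory for $1<p<\infty$ and Bernstein inequalities are identical to the Euclidean ones), or transfer the underlying bilinear Fourier-multiplier bounds from $\mathbb{R}^n$ by a de Leeuw--type transference principle. The organizing device is Bony's decomposition
\[
fg = T_f g + T_g f + R(f,g),\qquad
T_f g=\sum_{j}S_{j-1}f\,\Delta_j g,\quad
R(f,g)=\sum_{|j-k|\le 1}\Delta_j f\,\Delta_k g,
\]
combined with the elementary observation that on a frequency annulus $\{|\xi|\sim 2^j\}$ the operator $\Lambda^s$ is, up to a smooth cutoff, multiplication by $2^{js}$, and with Bernstein's inequalities to move between the Lebesgue exponents $p_1,q_1,p_2,q_2$.

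First I would treat the product estimate (the second item). In $\Lambda^s R(f,g)$ and in the low--high paraproduct $\Lambda^s(T_f g)$ the output frequency is comparable to the frequency of $g$, so after relocating $\Lambda^s$ onto $g$ and summing the dyadic pieces (via the square-function characterization of $L^p$ when $1<p<\infty$, and an $\ell^1$ bound through Bernstein otherwise) one gets $C\,\|f\|_{L^{p_1}}\|\Lambda^s g\|_{L^{q_1}}$; symmetrically $\Lambda^s(T_g f)$ is controlled by $C\,\|g\|_{L^{p_2}}\|\Lambda^s f\|_{L^{q_2}}$. Adding the three contributions yields the calculus inequality.

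The commutator estimate (the first item) is the substantial part, since it requires a gain of one full derivative from $f$. Writing $\Lambda^s(fg)-f\Lambda^s g$ on the Fourier side as the bilinear operator with symbol $\sigma(\eta,\zeta)=|\eta+\zeta|^s-|\zeta|^s$, where $\eta$ is the frequency carried by $f$ and $\zeta$ that carried by $g$, I would split into the regions $|\eta|\lesssim|\zeta|$ and $|\eta|\gtrsim|\zeta|$. In the first region---which after the Bony decomposition is exactly the high--low piece $\Lambda^s(T_f g)-T_f(\Lambda^s g)=\sum_j[\Lambda^s,S_{j-1}f]\Delta_j g$---a first-order Taylor expansion of $|\cdot|^s$ around $\zeta$ writes $\sigma=|\zeta|^{s-1}\cdot(\text{Coifman--Meyer symbol})\cdot|\eta|$, so the Coifman--Meyer multiplier theorem (equivalently, a direct estimate of $[\Lambda^s,S_{j-1}f]$ on frequencies $\sim 2^j$ by $2^{j(s-1)}\|\nabla S_{j-1}f\|_{L^{p_1}}$, followed by summation in $j$) bounds this contribution by $C\,\|\nabla f\|_{L^{p_1}}\|\Lambda^{s-1}g\|_{L^{q_1}}$. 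In the second region both $|\eta+\zeta|^s$ and $|\zeta|^s$ are $\lesssim|\eta|^s$, so no cancellation is needed and the same machinery---exactly as in the product estimate, using $s>0$ for the convergence of the low-frequency sums---gives $C\,\|g\|_{L^{p_2}}\|\Lambda^s f\|_{L^{q_2}}$.

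The main obstacle is precisely this one-derivative gain in the high--low interaction: it is the one place where the mean-value theorem applied to the symbol $|\eta+\zeta|^s-|\zeta|^s$ in the range $|\eta|\lesssim|\zeta|$---that is, the Coifman--Meyer/paradifferential symbol calculus---is indispensable, in contrast to all remaining terms, which are handled by routine dyadic bookkeeping. The only genuinely periodic point is justifying the transference of these Euclidean bilinear-multiplier bounds to $\mathbb{T}^n$ (or, alternatively, checking that the periodic Littlewood--Paley pieces sum as in $\mathbb{R}^n$), which is standard. Since all of this is classical, in the text we simply cite \cite{kato-1988-CommutatorEstimatesEuler, kenig}.
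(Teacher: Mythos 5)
The paper gives no proof of this lemma; it simply records the Kato--Ponce commutator estimate and the fractional Leibniz rule with citations to \cite{kato-1988-CommutatorEstimatesEuler, kenig}. Your sketch correctly outlines the standard Bony-paraproduct/Coifman--Meyer argument underlying those references (with the high--low mean-value cancellation correctly identified as the crux of the one-derivative gain), and you conclude, as the paper does, by deferring to the cited literature, so the two treatments agree.
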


\section{Integral representation and kernel estimates} \label{sec:kernel}

In this section, we derive an explicit integral representation for the solution of system \eqref{equation} and establish upper bounds for the associated kernel functions using spectral analysis. For clarity, we present the detailed derivation for the case $\mu=1$ and $\nu=0$, where viscosity is present but magnetic diffusion is absent; the complementary case $\mu=0$ and $\nu=1$ can be treated similarly. Throughout this section, we use \(\langle \cdot, \cdot \rangle\) to denote the standard inner product on \(\mathbb{C}^n\) for any \(n \geq 2\).

In contrast to \cite{XieJiu-CVPDE2024}, where the analysis relies on a hidden wave structure and produces identical integral representations for $u$ and $b$ in the two degenerate cases, our spectral approach yields distinct representations for $u$ and $b$. These structural differences directly affect the dissipation mechanisms acting on each component, resulting in different decay behaviors for $u$ and $b$ in the two settings.

We first establish the integral representation of the solution in Proposition \ref{pro1}, and then derive upper bounds for the kernel functions in Proposition \ref{pro2}, which are essential for quantifying the linear decay rates of $u$ and $b$ in the subsequent analysis.
\begin{prop}\label{pro1}
Let $(u,b)$ be a solution to system \eqref{equation} with $\mu=1$ and $\nu=0$. Then $(u,b)$ can be estimated as follows
\begin{align}
\left\{\begin{aligned}\label{2.4}
&|\hat{u}|\leq\left(|\widehat{K_2}|+
|\widehat{K_3}|\right)|\hat{\boldsymbol{\psi}}_0|
 +\int_0^t\left(|\widehat{K_2}(t-\tau)||\widehat{N}(\tau)|
+|\widehat{K_3}(t-\tau)||\widehat{N_1}(\tau)|\right)d\tau,\\[1ex]
&|\hat{b}|\leq\left(|\widehat{K_1}|+
|\widehat{K_3}|\right)|\hat{\boldsymbol{\psi}}_0| +\int_0^t\left(|\widehat{K_1}(t-\tau)||\widehat{N}(\tau)|
+|\widehat{K_3}(t-\tau)||\widehat{N_2}(\tau)|\right)d\tau,
\end{aligned}\right.
\end{align}
where $\boldsymbol{\psi}_0:=(u_0, b_0)^T$, $N=({N_1}, {N_2})^T$, ${N_1}:=\mathbb{P}(u \cdot \nabla u-b \cdot \nabla b)$, ${N_2}:=u \cdot \nabla b- b \cdot \nabla u
$ and $\mathbb{P}$ denotes the Helmholtz-Leray projection operator. The kernel functions $\widehat{K_1},\, \widehat{K_2}$ and $\widehat{K_3}$ are given by
\begin{align}\label{2.5'}
&|\widehat{K_1}|
=\frac{\left| e^{-\lambda_{-}t} - e^{-\lambda_{+}t} \right|}{|\lambda_{+} - \lambda_{-}|} \sqrt{ |\lambda_{+}|^2 + |\tilde{b} \cdot k|^2 }=:|\widehat{G}|\sqrt{ |\lambda_{+}|^2 + |\tilde{b} \cdot k|^2 },\notag\\[1ex]
&|\widehat{K_2}|
=\frac{\left| e^{-\lambda_{-}t} - e^{-\lambda_{+}t} \right|}{|\lambda_{+} - \lambda_{-}|} \sqrt{ |\lambda_{+}|^2 + |\tilde{b} \cdot k|^2 }\left| \frac{\tilde{b}\cdot k}{\lambda_{+}} \right|=:|\widehat{G}|\sqrt{ |\lambda_{+}|^2 + |\tilde{b} \cdot k|^2 }\left| \frac{\tilde{b}\cdot k}{\lambda_{+}} \right|, \\[1ex]
&\widehat{K_3} =e^{-\lambda_{+}t},\notag
\end{align}
where $\widehat{G}$ is Fourier multiplier operator
\begin{align}\label{2.5''}
\widehat{G} := \frac{e^{-\lambda_{-}t} - e^{-\lambda_{+}t}}{\lambda_{+} - \lambda_{-}}
\end{align}
with $\lambda_+$ and $\lambda_-$ being the roots of the characteristic equation,
\begin{align}\label{2.7}
\lambda^2 - |k|^2\lambda + |\tilde{b}\cdot k|^2=0
\end{align}
or
\begin{align*}
\lambda_\pm(k) = \frac{|k|^2 \pm \sqrt{|k|^4 - 4|\tilde{b}\cdot k|^2}}{2}.
\end{align*}
When $\lambda_+ = \lambda_-$, \eqref{2.4} remains valid if we replace $\widehat{G}$ in \eqref{2.5''} by their corresponding limit form, namely
\begin{equation*}
\widehat{G}=\lim_{\lambda_- \to \lambda_+} \frac{e^{-\lambda_{-}t} - e^{-\lambda_{+}t}}{\lambda_{+} - \lambda_{-}}=te^{-\lambda_{+}t}.
\end{equation*}
\end{prop}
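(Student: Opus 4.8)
The strategy is to diagonalize the linearized system \eqref{lineartheorem} in Fourier space and read off the kernels. First I would take the Fourier transform of \eqref{equation} with $\mu=1$, $\nu=0$ in the space variable; after applying the Helmholtz–Leray projection $\mathbb P$ to kill the pressure (using $\nabla\cdot u=\nabla\cdot b=0$), each mode $k\neq 0$ satisfies a closed $2\times 2$ linear ODE system for $(\hat u(k,t),\hat b(k,t))$, of the form
\begin{align*}
\partial_t \begin{pmatrix}\hat u\\ \hat b\end{pmatrix}
= \begin{pmatrix} -|k|^2 & \ii\,(\tilde b\cdot k)\\ \ii\,(\tilde b\cdot k) & 0\end{pmatrix}
\begin{pmatrix}\hat u\\ \hat b\end{pmatrix}
+ \begin{pmatrix}-\widehat{N_1}\\ -\widehat{N_2}\end{pmatrix},
\end{align*}
where the inhomogeneity collects the nonlinear terms $N_1=\mathbb P(u\cdot\nabla u-b\cdot\nabla b)$ and $N_2=u\cdot\nabla b-b\cdot\nabla u$ (the projection is harmless on $N_2$ since $b\cdot\nabla u-u\cdot\nabla b$ is already divergence-free). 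Here the mean-zero hypothesis (Lemma~\ref{lem:mean}) guarantees $\hat u(0,t)=\hat b(0,t)=0$, so only modes $k\neq 0$ matter and the symbol $\tilde b\cdot k$ never needs to be inverted at $k=0$.

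Next I would solve this ODE system by the variation-of-constants formula. The characteristic polynomial of the coefficient matrix is exactly $\lambda^2-|k|^2\lambda+|\tilde b\cdot k|^2=0$, i.e. \eqref{2.7}, with roots $\lambda_\pm=\lambda_\pm(k)$; writing the solution operator $e^{tA}$ via its two eigenvalues gives matrix entries that are linear combinations of $e^{-\lambda_+t}$ and $e^{-\lambda_-t}$ with coefficients built from $(\lambda_+-\lambda_-)^{-1}$, $\lambda_\pm$, and $\tilde b\cdot k$. Concretely, solving $e^{tA}=\alpha(t)I+\beta(t)A$ with $\alpha,\beta$ determined by the scalar conditions $e^{-\lambda_\pm t}=\alpha-\lambda_\pm\beta$ yields $\beta(t)=\widehat G=\frac{e^{-\lambda_-t}-e^{-\lambda_+t}}{\lambda_+-\lambda_-}$ and $\alpha(t)=\frac{\lambda_+e^{-\lambda_-t}-\lambda_-e^{-\lambda_+t}}{\lambda_+-\lambda_-}$; when $\lambda_+=\lambda_-$ one passes to the limit $\widehat G\to te^{-\lambda_+t}$. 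Substituting the matrix entries of $A$, one identifies the contribution to $\hat u$ coming from the $\hat b_0$-channel as $\widehat{K_1}$-type, the diagonal $e^{-\lambda_+t}$ remainder as $\widehat{K_3}$, and similarly for $\hat b$; reorganizing and bounding the complex entries by their moduli — using $|\alpha(t)|\le |\widehat G|\sqrt{|\lambda_+|^2+|\tilde b\cdot k|^2}$ and the factor $|\tilde b\cdot k/\lambda_+|$ in the off-diagonal terms (which is where $\lambda_+\lambda_-=|\tilde b\cdot k|^2$ from Vieta is used to simplify) — produces the stated kernels $\widehat{K_1},\widehat{K_2},\widehat{K_3}$ and the Duhamel integrals in \eqref{2.4}. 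The scalar $e^{-\lambda_+t}$ piece labelled $\widehat{K_3}$ arises from combining $\alpha(t)$ with $\lambda_-\widehat G$, since $\alpha(t)-\lambda_-\widehat G=e^{-\lambda_+t}$.

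\textbf{Main obstacle.} The delicate point is the bookkeeping that turns the \emph{equality} $e^{tA}$ (with its precise complex matrix entries) into the clean \emph{inequalities} \eqref{2.4}–\eqref{2.5'} involving only moduli and the single exponential profile $|\widehat G|$. One must check that every scalar coefficient appearing in $e^{tA}$ and in the Duhamel term is dominated by $|\widehat G|\sqrt{|\lambda_+|^2+|\tilde b\cdot k|^2}$ times an elementary factor, uniformly over all $k\neq 0$ and over the regime where $|k|^4-4|\tilde b\cdot k|^2$ changes sign (real vs. complex $\lambda_\pm$), and that the degenerate case $\lambda_+=\lambda_-$ is recovered as a genuine limit rather than a separate argument. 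Keeping track of where $\lambda_+$ versus $\lambda_-$ sits in denominators — and using $\mathrm{Re}\,\lambda_\pm\ge 0$ so the exponentials are bona fide decaying — is the part that requires care; the algebraic identities $\lambda_++\lambda_-=|k|^2$, $\lambda_+\lambda_-=|\tilde b\cdot k|^2$ are the workhorses that collapse the messy expressions into the form claimed.
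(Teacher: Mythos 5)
Your plan reproduces the paper's strategy — Fourier-diagonalize the coefficient matrix, apply Duhamel, and bound the resulting $2\times 2$ matrix exponential entry by entry — but your route to the matrix exponential is organized differently, and the difference is worth noting. The paper computes eigenvectors $\mathbf{a}_\pm$ and their biorthogonal duals $\overline{\mathbf{b}}_\pm$ explicitly, writes $e^{-Mt}=e^{-\lambda_+t}\mathbf{a}_+\overline{\mathbf{b}}_+^{T}+e^{-\lambda_-t}\mathbf{a}_-\overline{\mathbf{b}}_-^{T}$, and then observes that $|\mathbf{b}_\pm|$ blows up near $\{|k|^4=4|\tilde b\cdot k|^2\}$, so it reorganizes the sum using $\mathbf{a}_+\overline{\mathbf{b}}_+^T+\mathbf{a}_-\overline{\mathbf{b}}_-^T=I$ to reach the singularity-free form $e^{-Mt}=e^{-\lambda_+t}I+\widehat G(\lambda_+I-M)$. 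Your Cayley--Hamilton ansatz $e^{tA}=\alpha(t)I+\beta(t)A$ (with $\beta=\widehat G$ and $\alpha$ determined by the two scalar eigenvalue conditions) lands on the identical split in one step, because $\alpha(t)I+\beta(t)A=e^{-\lambda_+t}I+\widehat G(\lambda_+I-M)$ once you use $e^{-\lambda_+t}=\alpha-\lambda_+\widehat G$; it simply never produces the spurious singularity that the paper then has to cancel by hand. That is a genuine simplification, and the remaining algebra is the same in both versions: from $\lambda_+I-M=\begin{pmatrix}-\lambda_-&\ii\,\tilde b\cdot k\\ \ii\,\tilde b\cdot k&\lambda_+\end{pmatrix}$ together with Vieta $\lambda_+\lambda_-=|\tilde b\cdot k|^2$ one computes that the top row has Euclidean norm exactly $|\tilde b\cdot k/\lambda_+|\sqrt{|\lambda_+|^2+|\tilde b\cdot k|^2}$ and the bottom row exactly $\sqrt{|\lambda_+|^2+|\tilde b\cdot k|^2}$, which is precisely $|\widehat{K_2}|/|\widehat G|$ and $|\widehat{K_1}|/|\widehat G|$, as the paper derives by the longer spectral route.

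Two small slips in your write-up, neither fatal to the plan. First, from $e^{-\lambda_\pm t}=\alpha-\lambda_\pm\beta$ one has $\alpha(t)-\lambda_+\widehat G=e^{-\lambda_+t}$ (not $\alpha(t)-\lambda_-\widehat G$); you want the $\lambda_+$ subscript there because it is $\lambda_+I-M$, not $\lambda_-I-M$, that produces the bounded residual. Second, the inequality $|\alpha(t)|\le|\widehat G|\sqrt{|\lambda_+|^2+|\tilde b\cdot k|^2}$ is false as stated — $\alpha=e^{-\lambda_+t}+\lambda_+\widehat G$ contains an extra $e^{-\lambda_+t}$ term; the correct bookkeeping is to bound the two pieces of the split $e^{-\lambda_+t}I+\widehat G(\lambda_+I-M)$ separately, yielding $|\widehat{K_3}|$ plus $|\widehat{K_1}|$ (or $|\widehat{K_2}|$) respectively, which is exactly the structure of \eqref{2.4}. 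Also be consistent about which channel carries $\widehat{K_1}$ versus $\widehat{K_2}$: the $|\tilde b\cdot k/\lambda_+|$ damping factor attaches to the $\hat u$-row, so it is $\widehat{K_2}$, not $\widehat{K_1}$, that enters the $|\hat u|$ estimate.
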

The proof follows from spectral analysis and will be given below. The complementary case $\mu=0$ and $\nu=1$ is stated in the following Proposition.
\begin{prop}\label{cor:mu0nu1}
For system \eqref{equation} with $\mu=0$ and $\nu=1$, the solution $(u,b)$ satisfies
\begin{align}
\left\{
\begin{aligned}\label{2.4s}
&|\hat{u}(t)| \le \big(|\widehat{K_1}| + |\widehat{K_3}|\big) |\hat{\boldsymbol{\psi}}_0|
   + \int_0^t \Big( |\widehat{K_1}(t-\tau)|\,|\widehat{N}(\tau)|
   + |\widehat{K_3}(t-\tau)|\,|\widehat{N_1}(\tau)| \Big) \, d\tau,\\[1ex]
&|\hat{b}(t)| \le \big(|\widehat{K_2}| + |\widehat{K_3}|\big) |\hat{\boldsymbol{\psi}}_0|
   + \int_0^t \Big( |\widehat{K_2}(t-\tau)|\,|\widehat{N}(\tau)|
   + |\widehat{K_3}(t-\tau)|\,|\widehat{N_2}(\tau)| \Big) \, d\tau.
\end{aligned}
\right.
\end{align}
\end{prop}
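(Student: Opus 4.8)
The plan is to mirror the spectral derivation of Proposition \ref{pro1}, exploiting the structural symmetry between the two degenerate regimes. First I would apply the Helmholtz--Leray projection $\mathbb{P}$ to the first two equations of \eqref{equation} with $\mu=0$, $\nu=1$, eliminating the pressure and rewriting the nonlinearities as $N_1=\mathbb{P}(u\cdot\nabla u-b\cdot\nabla b)$ and $N_2=u\cdot\nabla b-b\cdot\nabla u$ (the latter is automatically divergence-free). Taking the Fourier transform in $x$, the linearized system for each fixed nonzero frequency $k$ becomes the $2\times2$ constant-coefficient ODE system
\begin{align*}
\frac{d}{dt}\begin{pmatrix}\hat u\\ \hat b\end{pmatrix}
=\begin{pmatrix} 0 & \mathrm{i}\,\tilde b\cdot k\\ \mathrm{i}\,\tilde b\cdot k & -|k|^2\end{pmatrix}
\begin{pmatrix}\hat u\\ \hat b\end{pmatrix}
+\begin{pmatrix}\widehat{N_1}\\ \widehat{N_2}\end{pmatrix},
\end{align*}
which is exactly the system obtained in the $\mu=1,\nu=0$ case with the roles of $\hat u$ and $\hat b$ interchanged (the diffusion $-|k|^2$ now sits on the $b$-equation). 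Hence the characteristic equation is again \eqref{2.7}, with the same eigenvalues $\lambda_\pm(k)$, and the very same kernel functions $\widehat{K_1},\widehat{K_2},\widehat{K_3},\widehat G$ appear.

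Next I would write the Duhamel/variation-of-parameters formula for this $2\times2$ system: $\Psi(t)=e^{tA(k)}\Psi_0+\int_0^t e^{(t-\tau)A(k)}\widehat N(\tau)\,d\tau$, and compute the matrix exponential $e^{tA(k)}$ explicitly in terms of $\lambda_\pm$. Because $A(k)$ here is the transpose-type reflection of the matrix in Proposition \ref{pro1} across the anti-diagonal, each entry of $e^{tA(k)}$ is obtained from the corresponding entry in the $\mu=1,\nu=0$ computation by swapping the $(1,1)$ and $(2,2)$ slots and the $(1,2)$ and $(2,1)$ slots. Concretely, in the first regime $\hat u$ was controlled by $|\widehat{K_2}|$ (diagonal term, carrying the extra $|\tilde b\cdot k/\lambda_+|$ factor coming from the damped slot) plus $|\widehat{K_3}|$, and $\hat b$ by $|\widehat{K_1}|$ (off-diagonal) plus $|\widehat{K_3}|$; after the swap, $\hat u$ is controlled by $|\widehat{K_1}|$ (now the off-diagonal transfer term) plus $|\widehat{K_3}|$, and $\hat b$ by $|\widehat{K_2}|$ plus $|\widehat{K_3}|$. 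Taking absolute values of each scalar entry and using the triangle inequality on the Duhamel integral then yields exactly \eqref{2.4s}; the $\widehat{K_3}$ terms pair with $\widehat{N_1}$ in the $u$-line and $\widehat{N_2}$ in the $b$-line just as in \eqref{2.4}, since $\widehat{K_3}=e^{-\lambda_+ t}$ is the contribution of the "clean" (undifferentiated nonlinearity) direction.

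The only genuine subtlety — and the step I expect to require the most care — is bookkeeping the precise placement of the $|\tilde b\cdot k/\lambda_+|$ weight that distinguishes $\widehat{K_2}$ from $\widehat{K_1}$, making sure it lands on the magnetic component in the $(\mu,\nu)=(0,1)$ case rather than the velocity component. This is where the claim in the paragraph preceding Proposition \ref{pro1} — that the spectral approach produces \emph{distinct} representations for $u$ and $b$, unlike the wave-structure approach of \cite{XieJiu-CVPDE2024} — is actually cashed out. Everything else is a routine transcription: the degenerate case $\lambda_+=\lambda_-$ is handled exactly as in Proposition \ref{pro1} by passing to the limit $\widehat G\to t e^{-\lambda_+ t}$, and the divergence-free and zero-mean conditions (the latter guaranteeing $\lambda_\pm$ are well-defined for all relevant $k\neq 0$ via Lemma \ref{lem:mean}) carry over verbatim. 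I would therefore present the proof compactly, indicating the reflection symmetry of $A(k)$ and referring back to the computation of Proposition \ref{pro1} for the explicit entries of the matrix exponential.
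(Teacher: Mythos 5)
Your proposal is correct and matches the paper's intent: the paper explicitly states that the proof of Proposition~\ref{cor:mu0nu1} is identical to that of Proposition~\ref{pro1} and omits it, and your derivation is precisely that repetition. Your observation that the new coefficient matrix is the conjugate $PMP^{-1}$ of the matrix $M$ from \eqref{1.412} by the swap $P=\begin{pmatrix}0&1\\1&0\end{pmatrix}$ (so the eigenvalues and $\widehat{K_1},\widehat{K_2},\widehat{K_3},\widehat G$ are unchanged while the roles of the two components are exchanged) is a clean way to organize what the paper's computation would produce, and the bookkeeping of where $|\tilde b\cdot k/\lambda_+|$ lands is exactly right.
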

The proof is identical to that of Proposition \ref{pro1} and is omitted.
\begin{proof}[ Proof of Proposition {\rm \ref{pro1}} ]
 We start by separating the linear terms in $\eqref{equation}$ from the nonlinear ones. Applying the Helmholtz-Leray projection operator
$\mathbb{P}: = \text{Id} - \nabla \Delta^{-1} \nabla \cdot$
to the velocity equation in $\eqref{equation}_1$. Then, \eqref{equation} is converted into
\begin{equation}\label{1.41}
\left\{ \begin{array}{l}
{\partial _t}u- \Delta u -\tilde{b} \cdot \nabla b+ N_1(u, b)=0,\\[1ex]
{\partial _t}b-\tilde{b} \cdot \nabla u+N_2(u, b) =0,\\[1ex]
\mathrm{div}\,u = \mathrm{div}\,b = 0,\\[1ex]
(u,b)|_{t=0} = (u_0, b_0),
\end{array} \right.
\end{equation}
where $N_1(u, b)$ and $N_2(u, b)$ are the nonlinear terms
\begin{equation}\label{1.411}
N_1(u, b)=\mathbb{P}(u \cdot \nabla u-b \cdot \nabla b),\,\,\,\,\,\,\,
N_2(u, b)=u \cdot \nabla b-  b \cdot \nabla u.
\end{equation}
Taking the Fourier transform to \eqref{1.41}, we find that, for \(k \in \mathbb{Z}^n \setminus \{0\}\),
\begin{equation}\label{97}
\left\{ \begin{array}{l}
\partial_t \hat{u} + |k|^2 \hat{u} - i (\tilde{b} \cdot k) \hat{b}  + \left( I - \frac{k \otimes k}{|k|^2} \right) \left( \widehat{u \cdot \nabla u} - \widehat{b \cdot \nabla b} \right)=0,\\[1ex]
\partial_t \widehat{b} -i(\widetilde{b}\cdot k)\widehat{u}+ \widehat{u\cdot\nabla b} - \widehat{b\cdot\nabla u} =0.
\end{array} \right.
\end{equation}
To better capture the dissipative mechanism and coupling effects, we rewrite the system in vector form \(\hat{\boldsymbol{\psi}} := (\hat{u}, \hat{b})^T\), which yields
\begin{align}\label{b}
\partial_t \hat{\boldsymbol{\psi}} + \emph{M} \hat{\boldsymbol{\psi}} + \widehat{N}(u, b) = 0,
\end{align}
where
\begin{align}\label{1.412}
\emph{M}: =
\begin{pmatrix}
|k|^2 & -i\tilde{b}\cdot k \\
-i\tilde{b}\cdot k & 0
\end{pmatrix}, \quad
\widehat{N}(u, b) :=
\begin{pmatrix}
\widehat{N_1} \\
\widehat{N_2}
\end{pmatrix}.
\end{align}
By Duhamel's principle, the solution to \eqref{b} is
\begin{align}\label{cd}
\hat{\boldsymbol{\psi}}&=e^{-\emph{M}t}\hat{\boldsymbol{\psi}}_0
-\int_0^te^{-\emph{M}(t-\tau)}\widehat{N}(u, b)(\tau)\, d\tau.
\end{align}
The fundamental solution matrix $e^{-\emph{M}t}$ can be made more explicit via the eigenvalues and eigenvectors of $M$.  Since the characteristic polynomial of the matrix \( M \) is given by
\[
\det(M - \lambda I) = \lambda^2 - |k|^2\lambda + |\tilde{b}\cdot k|^2,
\]
it admits two eigenvalues \( \lambda_\pm(k) \) with corresponding eigenvectors \( \mathbf{a}_\pm(k) \), defined by
\[
\lambda_\pm(k) = \frac{|k|^2 \pm \sqrt{|k|^4 - 4|\tilde{b}\cdot k|^2}}{2}, \quad
\mathbf{a}_\pm(k) = \begin{pmatrix} \lambda_\pm \\ -i\tilde{b}\cdot k \end{pmatrix},
\]
which satisfy the eigenvalue relation \( M\mathbf{a}_\pm(k) = \lambda_\pm(k)\mathbf{a}_\pm(k) \).

Let
\[
A_1 := (\mathbf{a}_+ \ \mathbf{a}_-), \quad
A_2 := A_1^{-1} = \frac{1}{\lambda_+ - \lambda_-} \begin{pmatrix}
1 & \frac{\lambda_-}{i\tilde{b}\cdot k} \\
-1 & -\frac{\lambda_+}{i\tilde{b}\cdot k}
\end{pmatrix} = \begin{pmatrix} \overline{\mathbf{b}}_+^{T} \\ \overline{\mathbf{b}}_-^{T} \end{pmatrix}.
\]
Thus, the matrix \( M \) is diagonalizable with
\[
M = A_1 \begin{pmatrix} \lambda_+ & 0 \\ 0 & \lambda_- \end{pmatrix} A_2 = \lambda_+ \mathbf{a}_+ \overline{\mathbf{b}}_+^{T} + \lambda_- \mathbf{a}_- \overline{\mathbf{b}}_-^{T}.
\]
Consequently, the matrix exponential \( e^{-Mt} \) is given by
\[
e^{-M t} = e^{-\lambda_+ t} \mathbf{a}_+ \overline{\mathbf{b}}_+^{T} + e^{-\lambda_- t} \mathbf{a}_- \overline{\mathbf{b}}_-^{T}.
\]
Then, by  \eqref{cd}, we get
\begin{align}\label{c}
\hat{\boldsymbol{\psi}}&=e^{-\emph{M}t}\hat{\boldsymbol{\psi}}_0
-\int_0^te^{-\emph{M}(t-\tau)}\widehat{N}\, d\tau\notag\\[1ex]
&=e^{-\lambda_+ t} \mathbf{a}_+ \overline{\mathbf{b}}_+^{T}\hat{\boldsymbol{\psi}}_0 + e^{-\lambda_- t} \mathbf{a}_- \overline{\mathbf{b}}_-^{T}\hat{\boldsymbol{\psi}}_0
-\int_0^t\left( e^{-\lambda_+ (t-\tau)} \mathbf{a}_+ \overline{\mathbf{b}}_+^{T} + e^{-\lambda_- (t-\tau)} \mathbf{a}_- \overline{\mathbf{b}}_-^{T}\right)\widehat{N}(\tau)\, d\tau\notag\\[1ex]
&=e^{-\lambda_+ t} \langle \hat{\boldsymbol{\psi}}_0, \mathbf{b}_+ \rangle \mathbf{a}_+ + e^{-\lambda_- t} \langle \hat{\boldsymbol{\psi}}_0, \mathbf{b}_- \rangle \mathbf{a}_-\notag\\[1ex]
&\quad-\int_0^t e^{-\lambda_+ (t-\tau)}\langle \widehat{N}(\tau),\mathbf{b}_{+}\rangle\mathbf{a}_{+}\, d\tau-\int_0^t e^{-\lambda_- (t-\tau)}\langle \widehat{N}(\tau),\mathbf{b}_{-}\rangle\mathbf{a}_{-}\, d\tau,
\end{align}
where we have used the identity \( \mathbf{a} \overline{\mathbf{b}}^{T} \mathbf{c} = \mathbf{a}\langle \mathbf{c}, \mathbf{b} \rangle =\langle \mathbf{c}, \mathbf{b} \rangle \mathbf{a} \)  for any column vector \( \mathbf{a} \). However, direct application of formula \eqref{c} poses challenges due to the unbounded nature of $\vert\mathbf{b}_{\pm}\vert$ in the neighborhood of the set \(\{|k|^{4}=4|\tilde{b}\cdot k|^{2}\}\). To address this concern, we resort to the following identity
\begin{align*}
\hat{\boldsymbol{\psi}}=&(e^{-\lambda_{-}t}-e^{-\lambda_{+}t})
\langle\hat{\boldsymbol{\psi}}_0,\mathbf{b}_{-}\rangle\mathbf{a}_{-}
+e^{-\lambda_{+}t}\hat{\boldsymbol{\psi}}_0\\[1ex]
&-\int_0^t(e^{-\lambda_{-}(t-\tau)}-e^{-\lambda_{+}(t-\tau)})
\langle \widehat{N}(\tau),\mathbf{b}_{-}\rangle\mathbf{a}_{-}\, d\tau
-\int_0^te^{-\lambda_{+}(t-\tau)}\widehat{N}(\tau)\, d\tau,
\end{align*}
which effectively eliminates the singularity of \( |\mathbf{b}_\pm| \). Thus, the components of \( \hat{\boldsymbol{\psi}} \) can now be expressed as follows. For \( \hat{b} \), we have
\begin{align}\label{3.1}
\hat{b}= &\langle \hat{\boldsymbol{\psi}}, e_2 \rangle\notag\\[1ex] =&(e^{-\lambda_{-}t}-e^{-\lambda_{+}t})
\langle\hat{\boldsymbol{\psi}}_0,\mathbf{b}_{-}\rangle\langle\mathbf{a}_{-}, e_2\rangle
+e^{-\lambda_{+}t}\langle\hat{\boldsymbol{\psi}}_0,e_2\rangle\notag\\[1ex]
&-\int_0^t(e^{-\lambda_{-}(t-\tau)}-e^{-\lambda_{+}(t-\tau)})
\langle \widehat{N}(\tau),\mathbf{b}_{-}\rangle\langle\mathbf{a}_{-}, e_2\rangle
 \, d\tau-\int_0^te^{-\lambda_{+}(t-\tau)}\langle \widehat{N}(\tau), e_2\rangle \, d\tau.
\end{align}
Similarly, for \( \hat{u} \), we observe that
\[
\langle \mathbf{a}_-, e_1 \rangle = \frac{\lambda_-}{-i\tilde{b}\cdot k} \langle \mathbf{a}_-, e_2 \rangle = \frac{i\tilde{b}\cdot k}{\lambda_+} \langle \mathbf{a}_-, e_2 \rangle,
\]
which yields
\begin{align}\label{3.2}
\hat{u}= &\langle \hat{\boldsymbol{\psi}}, e_1 \rangle\notag\\[1ex] =& \frac{i\tilde{b}\cdot k}{\lambda_+} (e^{-\lambda_- t} - e^{-\lambda_+ t}) \langle \hat{\boldsymbol{\psi}}_0, \mathbf{b}_- \rangle \langle \mathbf{a}_-, e_2 \rangle + e^{-\lambda_+ t} \langle \hat{\boldsymbol{\psi}}_0, e_1 \rangle\notag\\[1ex]
&-\int_0^t\frac{i\tilde{b}\cdot k}{\lambda_+}(e^{-\lambda_{-}(t-\tau)}-e^{-\lambda_{+}(t-\tau)})
\langle\widehat{N}(\tau),\mathbf{b}_{-}\rangle\langle\mathbf{a}_{-}, e_2\rangle \, d\tau-\int_0^te^{-\lambda_{+}(t-\tau)}\langle \widehat{N}(\tau), e_1\rangle\, d\tau.
\end{align}
With the representations \eqref{3.1} and \eqref{3.2} in hand, we derive
\begin{align*}
|\hat{b}|\leq& \left\lvert\left( e^{-\lambda_{-}t}-e^{-\lambda_{+}t}\right) \langle\hat{\boldsymbol{\psi}}_0,\mathbf{b}_{-}\rangle\langle\mathbf{a}_{-}, e_2\rangle\right\rvert+ \left\lvert e^{-\lambda_{+}t}\hat{b}_0 \right\rvert\notag\\[1ex]
&+\int_0^t\left\lvert\left(e^{-\lambda_{-}(t-\tau)}-e^{-\lambda_{+}(t-\tau)}\right)
\langle \widehat{N}(\tau),\mathbf{b}_{-}\rangle\langle\mathbf{a}_{-}, e_2\rangle\right\lvert  \,d\tau+\int_0^t\left\lvert e^{-\lambda_{+}(t-\tau)}\widehat{N_2}(\tau) \right\lvert \,d\tau,
\end{align*}
and
\begin{align*}
|\hat{u}|\leq& \left\lvert\frac{\tilde{b}\cdot k}{\lambda_{+}}\left( e^{-\lambda_{-}t}-e^{-\lambda_{+}t}\right) \langle\hat{\boldsymbol{\psi}}_0,\mathbf{b}_{-}\rangle\langle\mathbf{a}_{-}, e_2\rangle\right\rvert+ \left\lvert e^{-\lambda_{+}t}\hat{u}_0 \right\rvert\notag\\[1ex]
&+\int_0^t\left\lvert\frac{\tilde{b}\cdot k}{\lambda_{+}}\left(e^{-\lambda_{-}(t-\tau)}-e^{-\lambda_{+}(t-\tau)}\right)
\langle \widehat{N}(\tau),\mathbf{b}_{-}\rangle\langle\mathbf{a}_{-}, e_2\rangle\right\lvert \, d\tau+\int_0^t\left\lvert e^{-\lambda_{+}(t-\tau)}\widehat{N_1}(\tau)\right\lvert\, d\tau.
\end{align*}
From the definitions of $\mathbf{b}_{-}$ and $\mathbf{a}_{-}$,  we compute for any $\mathbf{f}\in \mathbb{C}^2$, there are
\begin{align*}
\left| (e^{-\lambda_{-}t} - e^{-\lambda_{+}t}) \langle \mathbf{f}, \mathbf{b}_{-} \rangle \langle \mathbf{a}_{-}, e_2 \rangle \right|\leq &\left| e^{-\lambda_{-}t} - e^{-\lambda_{+}t} \right| |\mathbf{b}_{-}| |\langle \mathbf{a}_{-}, e_2 \rangle| |\mathbf{f}|\notag\\[1ex]
=&{\left| e^{-\lambda_{-}t} - e^{-\lambda_{+}t} \right|}\frac{\sqrt{ |\lambda_{+}|^2 + |\tilde{b} \cdot k|^2 }}{|\lambda_{+} - \lambda_{-}|} |\mathbf{f}|\notag\\[1ex]
=&:|\widehat{K_1}(k,t)||\mathbf{f}|,
\end{align*}
and
\begin{align*}
\left| \frac{\tilde{b}\cdot k}{\lambda_{+}}(e^{-\lambda_{-}t} - e^{-\lambda_{+}t}) \langle \mathbf{f}, \mathbf{b}_{-} \rangle \langle \mathbf{a}_{-}, e_2 \rangle \right|
\leq& \left| e^{-\lambda_{-}t} - e^{-\lambda_{+}t} \right| |\mathbf{b}_{-}| |\langle \mathbf{a}_{-}, e_2 \rangle|\left| \frac{\tilde{b}\cdot k}{\lambda_{+}} \right| |\mathbf{f}|\notag\\[1ex]
=&\frac{\left| e^{-\lambda_{-}t} - e^{-\lambda_{+}t} \right|}{|\lambda_{+} - \lambda_{-}|} {\sqrt{ |\lambda_{+}|^2 + |\tilde{b} \cdot k|^2 }}\left| \frac{\tilde{b}\cdot k}{\lambda_{+}} \right||\mathbf{f}|\notag\\[1ex]
=&:|\widehat{K_2}(k,t)||\mathbf{f}|.
\end{align*}
To simplify the notation, we define
$$
\widehat{K_3}(k,t): = e^{-\lambda_{+}t},\quad  \widehat{G}(k,t) := \frac{e^{-\lambda_{-}t} - e^{-\lambda_{+}t}}{\lambda_{+} - \lambda_{-}},
$$
and then
$$
|\widehat{K_1}(k,t)| = |\widehat{G}(k,t)|{\sqrt{ |\lambda_{+}|^2 + |\tilde{b} \cdot k|^2 }},\quad |\widehat{K_2}(k,t)| = |\widehat{G}(k,t)|{\sqrt{ |\lambda_{+}|^2 + |\tilde{b} \cdot k|^2 }}\left| \frac{\tilde{b}\cdot k}{\lambda_{+}} \right| .
$$
Thus, we get
\begin{align*}
|\hat{b}|\leq|\widehat{K_1}||\hat{\boldsymbol{\psi}}_0|+
|\widehat{K_3}||\hat{b}_0| +\int_0^t|\widehat{K_1}(t-\tau)||\widehat{N}(\tau)|\, d\tau
+\int_0^t|\widehat{K_3}(t-\tau)||\widehat{N_2}(\tau)|\, d\tau,
\end{align*}
and
\begin{align*}
|\hat{u}|\leq|\widehat{K_2}||\hat{\boldsymbol{\psi}}_0|+
|\widehat{K_3}||\hat{u}_0| +\int_0^t|\widehat{K_2}(t-\tau)||\widehat{N}(\tau)|\, d\tau
+\int_0^t|\widehat{K_3}(t-\tau)||\widehat{N_1}(\tau)|\,d\tau.
\end{align*}
This completes the proof of Proposition \ref{pro1}.
\end{proof}

We now analyze the behavior of the kernel functions $\widehat{K_1}$--$\widehat{K_3}$, which are anisotropic and inhomogeneous Fourier multipliers. To accurately capture their decay properties, it is natural to decompose the frequency space $\mathbb{Z}^n \setminus \{0\}$ into suitable subregions. The following proposition establishes uniform upper bounds for $\widehat{K_1}$--$\widehat{K_3}$ in each of these regions.
\begin{prop}\label{pro2}
Partition the frequency space $\mathbb{Z}^n\setminus\{0\}$ into the following three subdomains:
\begin{align*}
&S_1 := \left\{ k \in \mathbb{Z}^n \setminus \{0\} : |k|^4 - 4|\tilde{b}\cdot k|^2 \leq 0 \right\},\notag\\[1ex]
&S_2 := \left\{ k \in \mathbb{Z}^n \setminus \{0\} : 0 < |k|^4 - 4|\tilde{b}\cdot k|^2 \leq \tfrac{1}{4}|k|^4 \right\},\notag\\[1ex]
&S_3 := \left\{ k \in \mathbb{Z}^n \setminus \{0\} : |k|^4 - 4|\tilde{b}\cdot k|^2 > \tfrac{1}{4}|k|^4 \right\},
\end{align*}
where $S_1$, $S_2$ and $S_3$ represent {\it low frequency space}, {\it medium frequency space} and {\it high frequency space} respectively. Then, there exists an absolute constant $C > 0$ such that,
\newline
(1)\,\,for any $k\in S_1\cup S_2 $,
\begin{align}
|\widehat{K_1}|, |\widehat{K_2}|\leq C e^{-\frac{|k|^2}{8}t};\notag
\end{align}
\newline
(2)\,\, for any $k\in S_{3} $,
\begin{align}
|\widehat{K_1}|\leq Ce^{-\frac{(\tilde{b}\cdot k)^2}{|k|^2}t},\quad
|\widehat{K_2}|\leq C \frac{|\tilde{b}\cdot k|}{|k|^2} e^{-\frac{(\tilde{b}\cdot k)^2}{|k|^2}t};\notag
\end{align}
\newline
(3)\,\,for any $ k \in \mathbb{Z}^n \setminus \{0\}$,
\begin{align}
 |\widehat{K_3}|\leq e^{-\frac{|k|^2}{2}t}.\notag
\end{align}
\end{prop}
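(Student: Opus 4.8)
The plan is to estimate each kernel on the three frequency regimes by exploiting the explicit formulas \eqref{2.5'} together with the structure of the roots $\lambda_\pm$ of \eqref{2.7}. The starting point is the identity $\lambda_+\lambda_-=|\tilde b\cdot k|^2$ and $\lambda_++\lambda_-=|k|^2$, which controls all the quantities appearing in $\widehat{K_1},\widehat{K_2}$ in terms of $|k|$ and $|\tilde b\cdot k|$. I would first record two elementary facts: (i) on $S_3$ the discriminant is comparable to $|k|^4$, so $\sqrt{|k|^4-4|\tilde b\cdot k|^2}\ge\tfrac12|k|^2$, which forces $\lambda_+\ge\tfrac34|k|^2$ and $\lambda_-=|\tilde b\cdot k|^2/\lambda_+\le \tfrac{4}{3}|\tilde b\cdot k|^2/|k|^2$; (ii) on $S_1\cup S_2$ the discriminant is at most $\tfrac14|k|^4$ (with the convention that on $S_1$ it is negative and the roots are complex conjugates), so $\mathrm{Re}\,\lambda_\pm=\tfrac12|k|^2$ and $|\lambda_\pm|$ is comparable to $|k|^2$; in all cases $|\tilde b\cdot k|\le\tfrac12|k|^2$.

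For part (3), $\widehat{K_3}=e^{-\lambda_+ t}$ and since $\mathrm{Re}\,\lambda_+\ge\tfrac12|k|^2$ always (equality when the discriminant is nonnegative we still have $\lambda_+\ge\tfrac12|k|^2$, and when complex $\mathrm{Re}\,\lambda_+=\tfrac12|k|^2$), we get $|\widehat{K_3}|\le e^{-|k|^2 t/2}$ immediately. For part (1), on $S_1$ the two roots are complex conjugates $\tfrac12|k|^2\pm i\beta$, so $|e^{-\lambda_- t}-e^{-\lambda_+ t}|\le 2e^{-|k|^2 t/2}$ while $|\lambda_+-\lambda_-|=2\beta$; I would use the elementary bound $|e^{-\lambda_- t}-e^{-\lambda_+ t}|\le |\lambda_+-\lambda_-|\,t\,e^{-|k|^2 t/2}$ (mean value theorem / integral form $\int_0^t e^{-\lambda_+ s}e^{-\lambda_-(t-s)}ds$) to see $|\widehat G|\le t\,e^{-|k|^2t/2}$, hence $|\widehat G|\sqrt{|\lambda_+|^2+|\tilde b\cdot k|^2}\lesssim |k|^2 t\,e^{-|k|^2 t/2}\le C e^{-|k|^2 t/8}$ after absorbing the polynomial factor $|k|^2 t$ into the decaying exponential (using $x e^{-x}\le C$ with $x=|k|^2 t/8$ or similar). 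The extra factor $|\tilde b\cdot k|/\lambda_+\le 1$ in $\widehat{K_2}$ (since $|\tilde b\cdot k|\le\tfrac12|k|^2\le\lambda_+$... one must check $\lambda_+\ge|\tilde b\cdot k|$, which follows from $\lambda_+\lambda_-=|\tilde b\cdot k|^2$ and $\lambda_+\ge\lambda_-$) is bounded, so the same bound holds for $\widehat{K_2}$. On $S_2$ the same argument works since $|\lambda_+-\lambda_-|\le\tfrac12|k|^2$ again gives $|\widehat G|\le t e^{-|k|^2 t/2}$ (or directly $|\widehat G|\le \tfrac{1}{|\lambda_+-\lambda_-|}(e^{-\lambda_- t}+e^{-\lambda_+ t})$ combined with the $t$-bound when the denominator is small), and $\sqrt{|\lambda_+|^2+|\tilde b\cdot k|^2}\lesssim|k|^2$.

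For part (2), on $S_3$ the roots are real with $\lambda_+\ge\tfrac34|k|^2$ and $\lambda_-\le\tfrac{4}{3}|\tilde b\cdot k|^2/|k|^2$. Since $\lambda_-\le\lambda_+$, $|e^{-\lambda_- t}-e^{-\lambda_+ t}|\le e^{-\lambda_- t}$, and $|\lambda_+-\lambda_-|\ge\tfrac12|k|^2$ on $S_3$. Thus $|\widehat G|\le \tfrac{2}{|k|^2}e^{-\lambda_- t}$. Now $\sqrt{|\lambda_+|^2+|\tilde b\cdot k|^2}\le \sqrt 2\,\lambda_+\le |k|^2$ (using $|\tilde b\cdot k|\le\tfrac12|k|^2\le\lambda_+$ and $\lambda_+\le|k|^2$), giving $|\widehat{K_1}|\le C e^{-\lambda_- t}\le C e^{-c|\tilde b\cdot k|^2 t/|k|^2}$; one should be slightly careful that $\lambda_-\ge c|\tilde b\cdot k|^2/|k|^2$ — indeed $\lambda_-=|\tilde b\cdot k|^2/\lambda_+\ge |\tilde b\cdot k|^2/|k|^2$, so in fact the constant is $1$. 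For $\widehat{K_2}$ we pick up the extra factor $|\tilde b\cdot k|/\lambda_+\le \tfrac{|\tilde b\cdot k|}{\tfrac34|k|^2}\le \tfrac{4}{3}\tfrac{|\tilde b\cdot k|}{|k|^2}$, yielding $|\widehat{K_2}|\le C\tfrac{|\tilde b\cdot k|}{|k|^2}e^{-|\tilde b\cdot k|^2 t/|k|^2}$ as claimed. The main obstacle, and the only place requiring genuine care, is the degenerate locus where $|\lambda_+-\lambda_-|$ is small (the boundary between $S_1$ and $S_2$, and within $S_1$ near $\beta=0$): there the naive bound $|\widehat G|\le\tfrac{e^{-\lambda_- t}+e^{-\lambda_+ t}}{|\lambda_+-\lambda_-|}$ blows up and must be replaced by the uniform bound $|\widehat G|\le t\,e^{-\min(\mathrm{Re}\,\lambda_\pm)t}$ coming from the integral representation $\widehat G=\int_0^t e^{-\lambda_+(t-s)}e^{-\lambda_- s}\,ds$, which is valid including the coincident-root limit $\widehat G=t e^{-\lambda_+ t}$; this is precisely why $S_1\cup S_2$ are grouped together and estimated via the $t e^{-c|k|^2 t}\le C e^{-c'|k|^2 t}$ trick rather than via the difference quotient. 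I do not anticipate any other difficulty; the remaining steps are the routine algebraic inequalities relating $\lambda_\pm$, $|k|$ and $|\tilde b\cdot k|$ on each $S_i$.
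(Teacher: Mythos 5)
Your proposal is correct and follows essentially the same route as the paper: same frequency decomposition, same two-step strategy of first bounding $\widehat{G}$ and then controlling the algebraic prefactors $\sqrt{|\lambda_+|^2+|\tilde b\cdot k|^2}$ and $|\tilde b\cdot k/\lambda_+|$ region by region.

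The one genuine variation is worth flagging. For the $\widehat{G}$ estimate on $S_1\cup S_2$ the paper uses two separate devices — Euler's formula (giving $\widehat{G}=t\,e^{-|k|^2t/2}\,\frac{\sin(\sigma t/2)}{\sigma t/2}$) on $S_1$, and the mean value theorem on $S_2$ — whereas you invoke the single integral representation $\widehat{G}=\int_0^t e^{-\lambda_+(t-s)}e^{-\lambda_- s}\,ds$, which yields $|\widehat{G}|\le t\,e^{-(\min\Re\lambda_\pm)\,t}$ in one stroke and degenerates gracefully to $t\,e^{-\lambda_+ t}$ at coincident roots. This is a cleaner packaging. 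Likewise, you bound $\sqrt{|\lambda_+|^2+|\tilde b\cdot k|^2}\lesssim|k|^2$ uniformly on $S_1\cup S_2$ (valid since $|k|\ge1$), while the paper bounds it by an absolute constant on $S_1$ using the boundedness of that set; both work.

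One small arithmetic slip: on $S_2$ you state $|\widehat{G}|\le t\,e^{-|k|^2t/2}$, but the integral representation only gives $|\widehat{G}|\le t\,e^{-\lambda_- t}$ with $\lambda_-\ge\tfrac14|k|^2$ on $S_2$ (and strictly $\lambda_-<\tfrac12|k|^2$ there since the discriminant is positive). The correct bound is $t\,e^{-|k|^2t/4}$, which is the constant the paper obtains via the MVT. This does not affect your conclusion — absorbing the prefactor $|k|^2 t$ into $e^{-|k|^2t/4}$ still produces $C\,e^{-|k|^2t/8}$ — but the exponent should be fixed. The $\sqrt 2$ you drop in the $S_3$ prefactor bound is similarly cosmetic.
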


\begin{proof}[Proof of Proposition {\rm \ref{pro2}}]
To estimate \( |\widehat{K_1}| \), \( |\widehat{K_2}| \), and \( |\widehat{K_3}| \), we use their representations in terms of the characteristic roots \( \lambda_+, \lambda_- \) and the Fourier multiplier \( \widehat{G}(k) \), as given in \eqref{2.5'}, \eqref{2.7}, and \eqref{2.5''}. We proceed in steps.

\medskip
\noindent
\textbf{Step 1: Estimate of \( |\widehat{K_3}| \).} By definition \eqref{2.5'}, \( \widehat{K_3} = e^{-\lambda_+ t} \). Since \( \Re \lambda_+ \geq \frac{1}{2}|k|^2 \) for all \( k \neq 0 \), we immediately obtain
\[
|\widehat{K_3}| \leq e^{-\frac{|k|^2}{2}t}, \quad \text{for all } k \in \mathbb{Z}^n \setminus \{0\}.
\]

\medskip
\noindent
\textbf{Step 2: Estimate for $\widehat{G}(k,t)$.} Recall that
\begin{align*}
\widehat{G}(k,t) := \frac{e^{-\lambda_{-}t} - e^{-\lambda_{+}t}}{\lambda_{+} - \lambda_{-}}.
\end{align*}
We now estimate $\widehat{G}(k,t)$ for each frequency region $S_1$, $S_2$, and $S_3$.

\medskip
\noindent
\underline{\textbf{Case 1: $k \in S_1$}}. In this case, the characteristic roots $\lambda_\pm$ form a pair of complex conjugates. Let
\[
\sigma := \sqrt{4|\tilde{b}\cdot k|^2 - |k|^4},
\]
so that $\lambda_\pm = \frac{|k|^2}{2} \pm i \frac{\sigma}{2}$. Then, using Euler's formula,
\[
\widehat{G}(k,t) = e^{-\frac{|k|^2}{2}t} \cdot \frac{e^{i\frac{\sigma}{2}t} - e^{-i\frac{\sigma}{2}t}}{i\sigma}
= t e^{-\frac{|k|^2}{2}t} \cdot \frac{\sin\left( \frac{\sigma}{2}t \right)}{\frac{\sigma}{2}t}.
\]
Since $\left|\frac{\sin \theta}{\theta}\right| \leq 1$ for all $\theta \in \mathbb{R}$, we have
\[
|\widehat{G}(k,t)| \leq t e^{-\frac{|k|^2}{2}t}.
\]
Applying the standard inequality
\begin{align}\label{ye}
y^n e^{-y} \leq C_n, \quad \text{for all } y>0,\; n \in \mathbb{N},
\end{align}
with $y = \frac{|k|^2}{4}t$, $n=1$, yields
\[
|\widehat{G}(k,t)| \leq C e^{-\frac{1}{4}|k|^2 t}.
\]

\medskip
\noindent
\underline{\textbf{Case 2: $k \in S_2$}}. Here the roots $\lambda_\pm$ are real and distinct. Noting that
\begin{align}
&-\frac{3|k|^2}{4}\leq-\lambda_+=\frac{-|k|^2-\sqrt{|k|^{4}-4|\tilde{b}\cdot k|^{2}}}{2}<-\frac{|k|^2}{2},\label{S_2}\\
&-\frac{|k|^2}{2}<-\lambda_- =\frac{-|k|^2+\sqrt{|k|^{4}-4|\tilde{b}\cdot k|^{2}}}{2}\leq-\frac{|k|^2}{4},\notag
\end{align}
the mean-value theorem implies that for some $\zeta \in (-\lambda_+,-\lambda_-)$,
\[
|\widehat{G}(k,t)| = \left| \frac{e^{-\lambda_- t} - e^{-\lambda_+ t}}{\lambda_+ - \lambda_-} \right| = t e^{\zeta t}.
\]
Since $\zeta \leq -\frac{1}{4}|k|^2$, it follows that
\[
|\widehat{G}(k,t)| \leq t e^{-\frac{1}{4}|k|^2 t} \leq C \frac{1}{|k|^2} e^{-\frac{1}{8}|k|^2 t},
\]
where we used again \eqref{ye} with $y = \frac{1}{8}|k|^2 t$.

\medskip
\noindent
\underline{\textbf{Case 3: $k \in S_3$}}. The root $-\lambda_+$ satisfies
\begin{align}\label{S_3}
-|k|^2 \leq -\lambda_+ = -\frac{|k|^2}{2} \left( 1 + \sqrt{1 - 4\frac{(\tilde{b} \cdot k)^2}{|k|^4}} \right) < -\frac{3}{4}|k|^2.
\end{align}
In contrast, the root $-\lambda_-$ may approach zero. To handle this, we rewrite it as
\[
-\lambda_- = -\frac{|k|^2}{2} \left(1 - \sqrt{1 - 4\frac{(\tilde{b} \cdot k)^2}{|k|^4}} \right)
= \frac{-2 \frac{(\tilde{b} \cdot k)^2}{|k|^2}}{1 + \sqrt{1 - 4\frac{(\tilde{b} \cdot k)^2}{|k|^4}}}.
\]
Since $k \in S_3$ implies $\frac{1}{4} <1 - 4\frac{(\tilde{b}\cdot k)^2}{|k|^4} \leq 1$, we have
\[
-\frac{4(\tilde{b}\cdot k)^2}{3|k|^2} < -\lambda_- \leq -\frac{(\tilde{b}\cdot k)^2}{|k|^2}, \quad
\lambda_+ - \lambda_- = \sqrt{|k|^4 - 4|\tilde{b}\cdot k|^2} \geq \frac{1}{2}|k|^2.
\]
Therefore,
\[
|\widehat{G}(k,t)| \leq C \frac{e^{-\frac{3}{4}|k|^2 t} + e^{-\frac{(\tilde{b}\cdot k)^2}{|k|^2}t}}{|k|^2}
\leq C \frac{e^{-\frac{(\tilde{b}\cdot k)^2}{|k|^2}t}}{|k|^2}.
\]

\medskip
\noindent
\textbf{Step 3: Estimates for \( |\widehat{K_1}| \) and \( |\widehat{K_2}| \).}
We first recall from \eqref{2.5'} that
\begin{gather}
\begin{split}\label{K_12}
|\widehat{K_1}|^2 &= |\widehat{G}|^2 \left( |\lambda_+|^2 + |\tilde{b} \cdot k|^2 \right), \\
|\widehat{K_2}|^2 &= |\widehat{G}|^2 \left( |\lambda_+|^2 + |\tilde{b} \cdot k|^2 \right) \left| \frac{\tilde{b} \cdot k}{\lambda_+} \right|^2.
\end{split}
\end{gather}
Thus, we need to estimate the quantities \( |\lambda_+|^2 + |\tilde{b} \cdot k|^2 \) and \( \left| \frac{\tilde{b} \cdot k}{\lambda_+} \right|^2 \) in each region.

\medskip
\noindent
\underline{\textbf{Estimate of \( |\lambda_+|^2 + |\tilde{b} \cdot k|^2 \)}}:

\smallskip
\noindent
\textit{For \(k \in S_1\)}: Since \(|k|\) is small and bounded away from zero, we have
\[
|\lambda_+|^2+|\tilde{b} \cdot k|^2 = 2|\tilde{b} \cdot k|^2\leq C .
\]

\smallskip
\noindent
\textit{For \(k \in S_2\cup S_3 \)}: From previous computations \eqref{S_2} and \eqref{S_3}, we obtain \(|\lambda_+| \sim |k|^2\). Then, it follows that
\[
|\lambda_+|^2 + |\tilde{b} \cdot k|^2 \leq C|k|^4.
\]

\medskip
\noindent
\underline{\textbf{Estimate of \( \left| \frac{\tilde{b} \cdot k}{\lambda_+}\right|^2 \)}}:

\smallskip
\noindent
\textit{For \(k \in S_1\)}: Since \( |\lambda_+|^2=|\tilde{b} \cdot k|^2   \), we have
\[
\left| \frac{\tilde{b} \cdot k}{\lambda_+} \right|^2 \leq C.
\]

\smallskip
\noindent
\textit{For \(k \in S_2\)}: Using \eqref{S_2}, \(|\lambda_+| \sim |k|^2\), and \( |\tilde{b} \cdot k| \lesssim C|k| \), we get
\[
\left| \frac{\tilde{b} \cdot k}{\lambda_+} \right| \leq C \frac{|k|}{|k|^2}  \leq C \frac{1}{|k|} \leq C.
\]

\smallskip
\noindent
\textit{For \(k \in S_3\)}: From \eqref{S_3}, \( |\lambda_+| \sim |k|^2 \),  we estimate
\[
\left| \frac{\tilde{b} \cdot k}{\lambda_+} \right| \leq \frac{|\tilde{b} \cdot k|}{|k|^2}.
\]

\medskip
\noindent
Combining the above, we obtain
\begin{equation}\label{eq:lambda-bounds}
|\lambda_+|^2 + |\tilde{b} \cdot k|^2 \leq
\begin{cases}
C, & k \in S_1, \\
C|k|^4, & k \in S_2 \cup S_3,
\end{cases}
\quad
\left| \frac{\tilde{b} \cdot k}{\lambda_+} \right|^2 \leq
\begin{cases}
C, & k \in S_1 \cup S_2, \\
\frac{|\tilde{b} \cdot k|^2}{|k|^4}, & k \in S_3.
\end{cases}
\end{equation}
Substituting the bounds for $|\widehat{G}(k,t)|$ obtained in \textbf{Step 2}, together with the estimates \eqref{eq:lambda-bounds}, into \eqref{K_12}, we immediately arrive at
\begin{equation*}
  \begin{cases}
    |\widehat{K_1}|,\, |\widehat{K_2}| \leq C e^{ -\frac{|k|^2}{8} t }, & \text{for } k \in S_1 \cup S_2, \\[6pt]
    |\widehat{K_1}| \leq C e^{ -\frac{(\tilde{b} \cdot k)^2}{|k|^2} t }, \quad
    |\widehat{K_2}| \leq C \frac{|\tilde{b} \cdot k|}{|k|^2} e^{ -\frac{(\tilde{b} \cdot k)^2}{|k|^2} t }, & \text{for } k \in S_3.
  \end{cases}
\end{equation*}
This completes the proof of Proposition \ref{pro2}.
\end{proof}

\section{Linear stability}\label{2/3}
In this section, we establish the linear stability and time decay estimates of system \eqref{equation}. For clarity, we restrict to the case $\mu=1$ and $\nu=0$, corresponding to Theorem \ref{thm1}. The complementary case $\mu=0$ and $\nu=1$, stated in Theorem \ref{thm1.1}, can be treated analogously and will therefore be omitted.
For the reader's convenience, we recall the linearized system:
\begin{equation}\label{linearrepeat}
\left\{ \begin{array}{l}
{\partial _t}U - \Delta U - \tilde{b} \cdot \nabla B = 0,\\[1ex]
{\partial _t}B - \tilde{b} \cdot \nabla U = 0,\\[1ex]
\mathrm{div}\,U = \mathrm{div}\,B = 0,\\[1ex]
(U,B)|_{t = 0} = (U_0, B_0).
\end{array} \right.
\end{equation}
Proposition \ref{pro1} provides an explicit representation of the Fourier transforms of $(U,B)$, from which we immediately obtain
\begin{align}\label{new1}
\begin{cases}
|\hat{U}|^2 \leq \big(|\widehat{K}_2|^2 + |\widehat{K}_3|^2\big) |\hat{\boldsymbol{\phi}}_0|^2, \\[1ex]
|\hat{B}|^2 \leq \big(|\widehat{K}_1|^2 + |\widehat{K}_3|^2\big) |\hat{\boldsymbol{\phi}}_0|^2,
\end{cases}
\end{align}
where $\boldsymbol{\phi}_0 := (U_0, B_0)^T$. The estimate \eqref{new1}, together with the kernel bounds established in Proposition \ref{pro2}, will be used in the proof of Theorem \ref{thm1} to derive decay properties for the Sobolev norms of $U$ and $B$.
\begin{proof}[Proof of Theorem {\rm\ref{thm1}}]
Due to \eqref{mean-zero}, for any $t\geq 0$, there holds $\int_{\mathbb{T}^n}U(x,t)\,d x=\int_{\mathbb{T}^n}B(x,t)\,d x=0$ and therefore
\begin{equation}\label{mean-zero2}
 \w{U}(0,t)=0,\quad  \w{B}(0,t)=0.
\end{equation}
Using \eqref{new1}, \eqref{mean-zero2}, and the Plancherel theorem, we deduce that for any $0 \le s \le m$,
\begin{align*}
\lVert B(x, t)\rVert^2_{\dot{H}^{s}}&=\sum_{k \in \mathbb{Z}^n \setminus \{0\}} |k|^{2s} |\widehat{B}(k, t)|^2\\
&\leq \sum_{k\neq 0}\lvert k\rvert^{2s}|\widehat{K_1}|^2|\hat{\boldsymbol{\phi}}_0|^2+
\sum_{k\neq 0}\lvert k\rvert^{2s}|\widehat{K_3}|^2|\hat{\boldsymbol{\phi}}_0|^2=:I_1+I_3,
\end{align*}
and
\begin{align}\label{U}
\lVert U(x, t)\rVert^2_{\dot{H}^{s}}\leq \sum_{k\neq 0}\lvert k\rvert^{2s}|\widehat{K_2}|^2|\hat{\boldsymbol{\phi}}_0|^2
+\sum_{k\neq 0}\lvert k\rvert^{2s}|\widehat{K_3}|^2|\hat{\boldsymbol{\phi}}_0|^2=:I_2+I_3.
\end{align}
 From Proposition \ref{pro2},  it is clear that
\begin{align}\label{25}
I_3= \sum_{k\neq 0}\lvert k\rvert^{2s}|\widehat{K_3}|^2|\hat{\boldsymbol{\phi}}_0|^2\leq  e^{-t}  \sum_{k\neq 0} \lvert k\rvert^{2s} \lvert \hat{\boldsymbol{\phi}}_0\rvert^{2}\leq e^{-t}\lVert{\boldsymbol{\phi}}_0\rVert^2_{\dot{H}^{s}}.
\end{align}
Utilizing Proposition \ref{pro2} and the Diophantine condition \eqref{Diophantine}, we have
\begin{align}
I_1= \sum_{k\neq 0}\lvert k\rvert^{2s}|\widehat{K_1}|^2|\hat{\boldsymbol{\phi}}_0|^2&\leq C\sum_{k\in S_1\cup S_2}\lvert k\rvert^{2s}e^{-\frac{|k|^2}{4}t}|\hat{\boldsymbol{\phi}}_0|^2+C\sum_{k\in S_3}e^{-\frac{2\lvert \tilde{b}\cdot k\rvert^{2}}{\lvert k\rvert^{2}}t} \lvert k\rvert^{2s} \lvert\hat{\boldsymbol{\phi}}_0\rvert^{2}\notag\\
&\leq Ce^{-\frac{t}{4}}\|{\boldsymbol{\phi}}_0(x)\|_{\dot{H}^{s}}^2+C\sum_{k\in S_3}e^{-\frac{2c^2}{|k|^{2+2r}} t}\lvert k\rvert^{2s} \lvert\hat{\boldsymbol{\phi}}_0\rvert^{2}.\notag
\end{align}
With the help of \eqref{ye}, the second term can be estimated as
\begin{align}
	&\sum_{k\in S_3}e^{-\frac{2c^2}{|k|^{2+2r}} t}\lvert k\rvert^{2s} |\hat{\boldsymbol{\phi}}_0|^2\notag\\
	=&\sum_{k\in S_3}e^{-\frac{2c^2}{|k|^{2+2r}} t}\left(\frac{t}{|k|^{2+2r}}\right)^{\frac{m-s}{r+1}}t^{-\frac{m-s}{1+r}}|k|^{{2(m -s)}}\lvert k\rvert^{2s} |\hat{\boldsymbol{\phi}}_0|^2\notag\\
	\leq&  t^{-\frac{m-s}{1+r}} \|{\boldsymbol{\phi}}_0(x)\|_{\dot{H}^{m}}^2\sup_{k\in S_3}e^{-\frac{2c^2}{|k|^{2+2r}} t}\left(\frac{t}{|k|^{2+2r}}\right)^{\frac{m-s}{r+1}}\notag\\
	\leq& Ct^{-\frac{m-s}{1+r}} \|{\boldsymbol{\phi}}_0(x)\|_{\dot{H}^{m}}^2,\notag
\end{align}	
and
\begin{align}
e^{-\frac{2c^2}{|k|^{2+2r}} t}\lvert k\rvert^{2s} |\hat{\boldsymbol{\phi}}_0|^2\leq \lvert k\rvert^{2s} |\hat{\boldsymbol{\phi}}_0|^2.\notag
\end{align}
It follows
\begin{align}
\sum_{k\in S_3}e^{-\frac{2c^2}{|k|^{2+2r}} t}\lvert k\rvert^{2s} |\hat{\boldsymbol{\phi}}_0|^2
	\leq& C(1+t)^{-\frac{m-s}{1+r}} \|{\boldsymbol{\phi}}_0(x)\|_{\dot{H}^{m}}^2.\notag
\end{align}
Combining the above estimates, we obtain, for any $0 \le s \le m$,
\begin{align}\label{24}
\| B(t) \|_{\dot{H}^s}^2
\le e^{-\frac{t}{4}} \, \| \boldsymbol{\phi}_0 \|_{\dot{H}^s}^2
+ C (1+t)^{-\frac{m-s}{r+1}} \, \| \boldsymbol{\phi}_0 \|_{\dot{H}^m}^2.
\end{align}
It remains to estimate \(I_2\), which is given by
\begin{align}
I_2=\sum_{k\neq 0}\lvert k\rvert^{2s}|\widehat{K_2}|^2|\hat{\boldsymbol{\phi}}_0|^2&\leq C\sum_{k\in S_1\cup S_2}\lvert k\rvert^{2s}e^{-\frac{|k|^2}{4}t}|\hat{\boldsymbol{\phi}}_0|^2+C\sum_{k\in S_3}\frac{|\tilde{b}\cdot k|^2}{|k|^{4}}e^{-\frac{2\lvert \tilde{b}\cdot k\rvert^{2}}{\lvert k\rvert^{2}}t} \lvert k\rvert^{2s} \lvert\hat{\boldsymbol{\phi}}_0\rvert^{2}.\notag
\end{align}
By virtue of \eqref{ye}, the second term admits the estimate
\begin{align}
	&\sum_{k\in S_3}\frac{|\tilde{b}\cdot k|^2}{|k|^{4}} e^{-\frac{2\lvert\tilde{b}\cdot k\rvert^{2}}{\lvert k\rvert^{2}}t} \lvert k\rvert^{2s} \lvert\hat{\boldsymbol{\phi}}_0\rvert^{2}\notag\\
	=&\sum_{k\in S_3}\frac{1}{\lvert k\rvert^{2}}\left({\frac{\lvert\tilde{b}\cdot k\rvert^{2}}{\lvert k\rvert^{2}}t}\right)t^{-1}
e^{-\frac{2\lvert \tilde{b}\cdot k\rvert^{2}}{\lvert k\rvert^{2}}t}\left( {\frac{\lvert\tilde{b}\cdot k\rvert^{2}}{\lvert k\rvert^{2}}t} \right)^{\frac{m-s+1}{r+1}}\left( {\frac{\lvert\tilde{b}\cdot k\rvert^{2}}{\lvert k\rvert^{2}}t} \right)^{-\frac{m-s+1}{r+1}}
\lvert k\rvert^{2s} \lvert\hat{\boldsymbol{\phi}}_0\rvert^{2}\notag\\
	\leq&\sum_{k\in S_3}\frac{1}{\lvert k\rvert^{2}}\left({\frac{\lvert\tilde{b}\cdot k\rvert^{2}}{\lvert k\rvert^{2}}t}\right)t^{-1}
e^{-\frac{2\lvert \tilde{b}\cdot k\rvert^{2}}{\lvert k\rvert^{2}}t}\left( {\frac{\lvert\tilde{b}\cdot k\rvert^{2}}{\lvert k\rvert^{2}}t} \right)^{\frac{m-s+1}{r+1}}t^{-\frac{m-s+1}{1+r}}|k|^{(2+2r)\frac{m-s+1}{1+r}}
\lvert k\rvert^{2s} \lvert\hat{\boldsymbol{\phi}}_0\rvert^{2}\notag\\
=&\sum_{k\in S_3} t^{-\left(1+\frac{m-s+1}{1+r}\right)}e^{-\frac{2\lvert \tilde{b}\cdot k\rvert^{2}}{\lvert k\rvert^{2}}t}\left( {\frac{\lvert\tilde{b}\cdot k\rvert^{2}}{\lvert k\rvert^{2}}t} \right)^{\left(1+\frac{m-s+1}{1+r}\right)}
\lvert k\rvert^{2m}\lvert\hat{\boldsymbol{\phi}}_0\rvert^{2}\notag\\
	\leq&  t^{-\left(1+\frac{m-s+1}{1+r}\right)} \|{\boldsymbol{\phi}}_0(x)\|_{\dot{H}^{m}}^2\sup_{k\in S_3}e^{-\frac{2\lvert \tilde{b}\cdot k\rvert^{2}}{\lvert k\rvert^{2}}t}\left( {\frac{\lvert\tilde{b}\cdot k\rvert^{2}}{\lvert k\rvert^{2}}t} \right)^{\left(1+\frac{m-s+1}{1+r}\right)}\notag\\
	\leq& Ct^{-\left(1+\frac{m-s+1}{1+r}\right)} \|{\boldsymbol{\phi}}_0(x)\|_{\dot{H}^{m}}^2.\notag
\end{align}
As $\tilde{b}$ is fixed, it follows that
\begin{align}
\frac{|\tilde{b}\cdot k|^2}{|k|^{4}} e^{-\frac{2\lvert\tilde{b}\cdot k\rvert^{2}}{\lvert k\rvert^{2}}t} \lvert k\rvert^{2s} \lvert\hat{\boldsymbol{\phi}}_0\rvert^{2}\leq \lvert k\rvert^{2s} |\hat{\boldsymbol{\phi}}_0|^2.\notag
\end{align}
Thus
\begin{align*}
\sum_{k\in S_3}\frac{|\tilde{b}\cdot k|^2}{|k|^{4}} e^{-\frac{2|\tilde{b}\cdot k|^2}{|k|^{2}}t} |k|^{2s} \lvert\hat{\boldsymbol{\phi}}_0\rvert^{2}&\leq C (1 + t)^{-\left(1+\frac{m-s+1}{(1 + r)}\right)} \|{\boldsymbol{\phi}}_0(x)\|_{\dot{H}^{m}}^2.
\end{align*}
Hence, we arrive at, for any $0 \le s \le m$,
\begin{align*}
\| U(t) \|_{\dot{H}^s}^2
\le e^{-\frac{t}{4}} \| \boldsymbol{\phi}_0 \|_{\dot{H}^s}^2
+ C (1 + t)^{-\left(1 + \frac{m-s+1}{1+r}\right)} \| \boldsymbol{\phi}_0 \|_{\dot{H}^m}^2,
\end{align*}
which, together with the estimate for $B(t)$ in \eqref{24}, completes the proof of Theorem \ref{thm1}.
\end{proof}

\section{Proof of Theorem \ref{thm.2}}\label{s:7}
In this section, we establish the nonlinear stability result stated in Theorem \ref{thm.2}. The analysis is divided into two subsections.
\subsection{Global-in-time existence}\label{4/2a}
The local well-posedness of system \eqref{equation} can be established by standard methods, such as the Friedrichs mollifier or Fourier cutoff techniques; see, for instance, \cite{li-2017-local,fefferman-2014-local,Majda-Bertozzi}. Therefore, it suffices to derive uniform \emph{a priori} bounds to extend the local solution globally in time.

We start with the following energy inequality, which can be found in Lemma 4.1 of \cite{XieJiu-CVPDE2024}. The proof is omitted for brevity.
\begin{prop}\label{prop3.2}
Let $(u,b)$ be a smooth solution to \eqref{equation} with $\mu =0$ and $\nu = 1$.
Then for any $s \in [0,m]$ and $t \in [0,T]$, the following energy inequality holds:
\begin{align}\label{energy1'}
\frac{1}{2}\frac{d}{dt} \lVert (u, b)(t) \rVert_{H^s}^2
+ \lVert \nabla b(t) \rVert_{H^s}^2
\;\leq\; C \Big( \lVert \nabla u(t) \rVert_{L^\infty}
+ \lVert \nabla b(t) \rVert_{L^\infty} \Big)
\lVert (u, b)(t) \rVert_{H^s}^2 .
\end{align}
\end{prop}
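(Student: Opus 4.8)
The plan is to perform an energy estimate at the $H^s$ level by applying $\Lambda^s$ to the first two equations of \eqref{equation} (with $\mu=0$, $\nu=1$), testing against $\Lambda^s u$ and $\Lambda^s b$ respectively, and adding. Since $\operatorname{div} u = \operatorname{div} b = 0$, the linear coupling terms $\tilde b\cdot\nabla b$ and $\tilde b\cdot\nabla u$ cancel after summation: $\int \Lambda^s(\tilde b\cdot\nabla b)\cdot\Lambda^s u + \int \Lambda^s(\tilde b\cdot\nabla u)\cdot\Lambda^s b = \int \tilde b\cdot\nabla(\Lambda^s b \cdot \Lambda^s u) = 0$ on the torus. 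The diffusion term $-\nu\Delta b = -\Delta b$ produces, after testing against $\Lambda^s b$, the good term $\|\nabla b\|_{\dot H^s}^2$; combining with the $L^2$-level estimate gives the full $\|\nabla b\|_{H^s}^2$ on the left. What remains is to bound the four nonlinear contributions coming from $u\cdot\nabla u$, $b\cdot\nabla b$, $u\cdot\nabla b$, $b\cdot\nabla u$.

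For the nonlinear terms, the first step is to exploit the divergence-free structure to trade a full derivative for a commutator. For instance, $\int \Lambda^s(u\cdot\nabla u)\cdot\Lambda^s u = \int [\Lambda^s, u\cdot\nabla]u\cdot\Lambda^s u$, since $\int (u\cdot\nabla)\Lambda^s u \cdot \Lambda^s u = 0$ by incompressibility; similarly for the other self-interaction and cross terms. Then I would apply the Kato--Ponce commutator estimate (Lemma \ref{jiaohuanzi}) with $p=2$, $p_1=q_2=\infty$, $p_2 = q_1 = 2$: schematically, $\|[\Lambda^s, f]g\|_{L^2} \lesssim \|\nabla f\|_{L^\infty}\|\Lambda^{s-1}g\|_{L^2} + \|g\|_{L^\infty}\|\Lambda^s f\|_{L^2}$. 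Applied to each term, this produces bounds of the form $\big(\|\nabla u\|_{L^\infty} + \|\nabla b\|_{L^\infty}\big)\|(u,b)\|_{\dot H^s}^2$, possibly after also using $\|g\|_{L^\infty} \lesssim \|\nabla g\|_{L^\infty}$ on $\mathbb{T}^n$ for zero-mean $g$ (via Poincaré), so that the non-commuted pieces like $\int \Lambda^s b \cdot (\Lambda^s u\cdot\nabla) b$ from the cross terms, where no full derivative hits a top-order factor, are also controlled by $\|\nabla b\|_{L^\infty}\|(u,b)\|_{\dot H^s}^2$. One should be slightly careful with the terms where $\nabla$ lands on the ``transported'' field rather than the coefficient, e.g. $b\cdot\nabla u$ tested against $\Lambda^s b$: here writing $\int \Lambda^s(b\cdot\nabla u)\cdot\Lambda^s b$ and distributing $\Lambda^s$ via the product rule / calculus inequality, the worst piece is $\int (b\cdot\nabla)\Lambda^s u\cdot\Lambda^s b$, which is \emph{not} obviously zero; but combined with the symmetric piece $\int (b\cdot\nabla)\Lambda^s b \cdot \Lambda^s u$ arising from $b\cdot\nabla b$ in the $u$-equation (with the sign from $+b\cdot\nabla b$ on the right and $-b\cdot\nabla u$ on the right of the $b$-equation), these top-order exchanges cancel in pairs just as the linear ones did, leaving only commutators. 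Summing over the relevant range and absorbing, one obtains \eqref{energy1'}.

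The main obstacle is the bookkeeping of which top-order nonlinear interactions genuinely cancel versus which must be estimated as commutators: the Lorentz-force term $b\cdot\nabla b$ in the momentum equation and the stretching term $b\cdot\nabla u$ in the induction equation have the algebraic structure that makes their highest-order parts cancel against each other (this is the classical MHD energy cancellation), while $u\cdot\nabla u$ and $u\cdot\nabla b$ are handled by pure incompressibility. Getting all signs and index ranges right so that every uncontrolled top-order term disappears — and only quantities bounded by $(\|\nabla u\|_{L^\infty}+\|\nabla b\|_{L^\infty})\|(u,b)\|_{H^s}^2$ survive — is the crux. Since the statement merely cites Lemma 4.1 of \cite{XieJiu-CVPDE2024} and omits the proof, in the paper one would simply note that this is a standard commutator energy estimate and refer there; for completeness the argument is exactly as sketched above, combining the Kato--Ponce inequality (Lemma \ref{jiaohuanzi}) with the MHD cancellation and the divergence-free conditions.
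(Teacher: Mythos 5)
Your sketch is correct and follows exactly the standard argument the paper defers to (Lemma~4.1 of \cite{XieJiu-CVPDE2024}), and which the paper spells out explicitly in the analogous Proposition~\ref{prop4.2} for the $\mu=1,\nu=0$ case: apply $\Lambda^s$, test, cancel the linear $\tilde b\cdot\nabla$ coupling and the top-order $b\cdot\nabla b$/$b\cdot\nabla u$ exchange, reduce $u\cdot\nabla u$ and $u\cdot\nabla b$ to commutators by incompressibility, and close with Kato--Ponce (Lemma~\ref{jiaohuanzi}). The parenthetical about needing $\|g\|_{L^\infty}\lesssim\|\nabla g\|_{L^\infty}$ is superfluous once one uses the commutator form consistently, since Kato--Ponce already yields $\|\nabla f\|_{L^\infty}$ and $\|g\|_{L^\infty}$ with $g$ itself a gradient, but this does not affect the validity of the argument.
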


\medskip

We now turn to the proof of the global existence part of Theorem \ref{thm.2}.
Taking $s = m$ in \eqref{energy1'} and integrating over $[0,T]$ yields
\begin{align}\label{energy-m}
&\|(u,b)(T)\|_{H^m}^2+\int_0^\infty \lVert \nabla b(t) \rVert_{H^m}^2 \,{d}t \notag\\
\leq& \|(u_0,b_0)\|_{H^m}^2
+ C \sup_{t\in[0,T]}\|(u,b)(t)\|_{H^m}^2
\int_0^T \big( \|\nabla u(t)\|_{L^\infty} + \|\nabla b(t)\|_{L^\infty} \big)\,dt.
\end{align}
To rule out possible blow-up and extend the solution globally, it remains to estimate the critical quantity in \eqref{energy-m}, namely,
\[
\int_0^T \big(\|\nabla u(t)\|_{L^\infty}+\|\nabla b(t)\|_{L^\infty}\big)\,dt.
\]
Using the Fourier series representation,  we obtain for  $t \ge 0$,
\[
\|\nabla u(t)\|_{L^\infty}
\leq C\sum_{k\in \mathbb{Z}^n\setminus\{0\}} |k|\,|\hat{u}(t,k)|,
\qquad
\|\nabla b(t)\|_{L^\infty}
\leq C\sum_{k\in \mathbb{Z}^n\setminus\{0\}} |k|\,|\hat{b}(t,k)|,
\]
so that
\begin{align*}
&\int_0^T \big(\|\nabla u(t)\|_{L^\infty} + \|\nabla b(t)\|_{L^\infty}\big)\,dt\\
\leq &C\sum_{k\in \mathbb{Z}^n\setminus\{0\}}\int_0^T |k|\,|\hat u(t,k)|\,dt
+ C\sum_{k\in \mathbb{Z}^n\setminus\{0\}}\int_0^T |k|\,|\hat b(t,k)|\,dt.
\end{align*}

Claim that
\begin{align}\label{prop-applied}
 &\sum_{k \in \mathbb{Z}^n \setminus \{0\}} \int_{0}^{T} |k|\left| \hat{u}(t,k) \right| \, dt
+ \sum_{k \in \mathbb{Z}^n \setminus \{0\}} \int_{0}^{T} |k|| \hat{b}(t,k)| \, dt\leq C\|(u_0,b_0)\|_{H^m} \notag\\
&\quad
+ C_1 \sup_{t \in [0,T]} \|(u, b)(t)\|_{H^m}
\left( \sum_{k \in \mathbb{Z}^n \setminus \{0\}}\int_0^T |k|\,|\hat u(t,k)| \, dt
+ \sum_{k \in \mathbb{Z}^n \setminus \{0\}}\int_0^T |k|\,|\hat b(t,k)| \, dt \right).
\end{align}
for some constant \(C_1>0\).

By choosing the initial data sufficiently small as in \eqref{smallcondition}, the last term on the right-hand side of \eqref{prop-applied} can be absorbed into the left-hand side via a standard bootstrap argument. Hence we obtain
\begin{align}\label{Linfty}
\int_0^T \big(\|\nabla u(t)\|_{L^\infty} + \|\nabla b(t)\|_{L^\infty}\big)\,dt
\leq C \|(u_0,b_0)\|_{H^m}, \qquad \forall\, T>0.
\end{align}
Substituting \eqref{Linfty}  into \eqref{energy-m} and using the smallness assumption \eqref{smallcondition} again, we deduce
\begin{align}\label{Linftyab}
\sup_{t\in[0,\infty)} \|(u,b)(t)\|_{H^m}^2+\int_0^\infty \lVert \nabla b(t) \rVert_{H^m}^2 \,{d}t
\leq C \|(u_0,b_0)\|_{H^m}^2,
\end{align}
which establishes the global-in-time existence of smooth solutions and the uniform bound \eqref{eq:1.2a}.

Finally, we justify the Fourier-side control claimed in \eqref{prop-applied}.
\begin{prop}\label{prop6.1a}
Let $n \geq 2$ and $m > 4 + 2r + \tfrac{n}{2}$.
Assume that $(u,b)$ is a smooth solution to \eqref{equation} with $\mu=0$ and $\nu=1$.
Then there exists a constant $C>0$ such that for all $T>0$,
\begin{align}\label{prop6.11a}
 &\sum_{k \in \mathbb{Z}^n \setminus \{0\}} \int_{0}^{T} |k|\left| \hat{u}(t,k) \right| \, dt
+ \sum_{k \in \mathbb{Z}^n \setminus \{0\}} \int_{0}^{T} |k|| \hat{b}(t,k)| \, dt\leq C\|(u_0,b_0)\|_{H^m} \notag\\
&\quad
+ C \sup_{t \in [0,T]} \|(u, b)(t)\|_{H^m}
\left( \sum_{k \in \mathbb{Z}^n \setminus \{0\}}\int_0^T |k|\,|\hat u(t,k)| \, dt
+ \sum_{k \in \mathbb{Z}^n \setminus \{0\}}\int_0^T |k|\,|\hat b(t,k)| \, dt \right).
\end{align}
\end{prop}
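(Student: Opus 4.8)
The plan is to substitute the integral representation of Proposition~\ref{cor:mu0nu1} (the regime $\mu=0$, $\nu=1$, in which $\hat u$ is driven by $\widehat{K_1},\widehat{K_3}$ and $\hat b$ by $\widehat{K_2},\widehat{K_3}$, while the kernel bounds of Proposition~\ref{pro2} still apply) into the left side of \eqref{prop6.11a}, and to split the outcome into a linear part carrying $\hat{\boldsymbol{\psi}}_0$ and a nonlinear part carrying $\widehat N$. Write $w:=(u,b)$ and recall that on $\mathbb T^n$ every $k\in\mathbb Z^n\setminus\{0\}$ obeys $|k|\ge1$. The first step is a pure time-integration reduction: by Tonelli's theorem, for any kernel $\widehat K\in\{\widehat{K_1},\widehat{K_2},\widehat{K_3}\}$,
\[
\int_0^T|k|\int_0^t|\widehat K(t-\tau,k)|\,|\widehat N(\tau,k)|\,d\tau\,dt\;\le\;\Bigl(\int_0^\infty|k|\,|\widehat K(s,k)|\,ds\Bigr)\int_0^T|\widehat N(\tau,k)|\,d\tau,
\]
and the linear term likewise contributes $\bigl(\int_0^\infty|k|\,|\widehat K(s,k)|\,ds\bigr)|\hat{\boldsymbol{\psi}}_0(k)|$. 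Thus everything reduces to (i) estimating the scalar weights $m_i(k):=\int_0^\infty|k|\,|\widehat{K_i}(s,k)|\,ds$, and then (ii) summing $m_i(k)$ against $|\hat{\boldsymbol{\psi}}_0(k)|$ (linear) or against $\int_0^T|\widehat N(\tau,k)|\,d\tau$ (nonlinear).

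For step (i) I invoke Proposition~\ref{pro2}. The defining inequalities of $S_1$ and $S_2$ force $|k|\le C(\tilde b)$, so $S_1\cup S_2$ is a \emph{finite} set and there $m_1(k),m_2(k)\le C$. For $\widehat{K_3}$ on all of $\mathbb Z^n\setminus\{0\}$, and for $\widehat{K_1},\widehat{K_2}$ on $S_3$, integrating the exponential bounds of Proposition~\ref{pro2} in $t$ and using the Diophantine inequality $|\tilde b\cdot k|\ge c|k|^{-r}$ yields
\[
m_3(k)\le \frac{2}{|k|},\qquad m_1(k)\le C|k|^{2r+3}\ \ (k\in S_3),\qquad m_2(k)\le C|k|^{r+1}\ \ (k\in S_3).
\]
For the linear part of \eqref{prop6.11a} it then suffices to sum these against $|\hat{\boldsymbol{\psi}}_0(k)|$; the heaviest weight is $|k|^{2r+3}$, and Cauchy--Schwarz gives $\sum_{k\ne0}|k|^{2r+3}|\hat{\boldsymbol{\psi}}_0(k)|\le C\|\boldsymbol{\psi}_0\|_{H^m}$ provided $2(m-2r-3)>n$, which holds under \eqref{m}. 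This produces the term $C\|(u_0,b_0)\|_{H^m}$.

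The crux is the nonlinear estimate. Using $\nabla\cdot u=\nabla\cdot b=0$, I write each component of $N$ in divergence form; since the Helmholtz projector $\mathbb P$ is a bounded Fourier multiplier, this gives $|\widehat N(\tau,k)|\le C|k|\,|\widehat{\mathbf Q}(\tau,k)|$, where $\mathbf Q$ is a fixed finite list of quadratic products ($u\otimes u$, $b\otimes b$, $u\otimes b$, $b\otimes u$), and hence $|\widehat{\mathbf Q}(\tau,k)|\le C\sum_{k'}|\widehat w(\tau,k-k')|\,|\widehat w(\tau,k')|$. Combined with step (i), the worst nonlinear contribution is the $S_3$ part of the $\widehat{K_1}$ convolution, bounded by $C\sum_{k\in S_3}|k|^{2r+4}\int_0^T|\widehat{\mathbf Q}(\tau,k)|\,d\tau$ (the $\widehat{K_2}$ term carries the smaller weight $|k|^{r+2}$, and the $\widehat{K_3}$ terms and the finite $S_1\cup S_2$ part carry weight $O(1)$, so they are strictly easier). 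To control $\sum_k|k|^{2r+4}|\widehat{\mathbf Q}(\tau,k)|$, I split $|k|^{2r+4}\le C\bigl(|k-k'|^{2r+4}+|k'|^{2r+4}\bigr)$ and, on each piece, use $|k'|\ge1$ respectively $|k-k'|\ge1$ to insert a spare single-derivative factor at no cost; the double sum then decouples and, by the symmetry $k'\leftrightarrow k-k'$,
\[
\sum_k|k|^{2r+4}|\widehat{\mathbf Q}(\tau,k)|\;\le\;C\Bigl(\sum_j|j|^{2r+4}|\widehat w(\tau,j)|\Bigr)\Bigl(\sum_{k'}|k'|\,|\widehat w(\tau,k')|\Bigr).
\]
By Cauchy--Schwarz the first factor is $\le C\|w(\tau)\|_{H^m}$ exactly when $2(m-2r-4)>n$, i.e.\ precisely under \eqref{m}, while the second factor is the quantity on the right of \eqref{prop6.11a}; integrating in $\tau$ and pulling $\sup_{t\in[0,T]}\|w(t)\|_{H^m}$ out of the first factor finishes this piece. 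The easier pieces are dominated the same way after noting $\sum_k|\widehat{\mathbf Q}(\tau,k)|\le\bigl(\sum_j|\widehat w(\tau,j)|\bigr)^2\le C\|w(\tau)\|_{H^m}\sum_k|k|\,|\widehat w(\tau,k)|$.

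The main obstacle I foresee is precisely the bookkeeping in this last step: the Diophantine condition dumps the large weight $|k|^{2r+3}$ onto the kernel, and one must verify that three structural inputs — the divergence form of the nonlinearity (which buys back one derivative), the placement of the high-order weight on the $L^\infty_tH^m$ factor while keeping only a single derivative on the time-integrated factor, and the torus bound $|k|\ge1$ (which lets one upgrade $|\widehat w|$ to $|k|\,|\widehat w|$) — combine so that the regularity budget closes at exactly $m>4+2r+\tfrac n2$. Everything else (the Tonelli reduction, the kernel time integrals, the treatment of $\mathbb P$, and the Cauchy--Schwarz summations) is routine.
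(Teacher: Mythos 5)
Your proposal is correct and follows essentially the same strategy as the paper: spectral representation plus the kernel bounds of Proposition~\ref{pro2}, the Diophantine lower bound to convert $|\tilde b\cdot k|^{-2}$ into $|k|^{2r}$, the divergence form of $N_1,N_2$ to pay one derivative, Young's convolution inequality with the zero-mode condition $\widehat w(\cdot,0)=0$ to insert the spare factor $|k'|\ge1$, and Cauchy--Schwarz to close at $m>4+2r+\tfrac n2$. The only organizational difference is that the paper routes $\hat b$ through a heat-kernel Duhamel on the $b$-equation (since $\nu=1$) and reduces its coupling term $I_9$ to the $\hat u$-estimate, whereas you apply the spectral representation \eqref{2.4s} symmetrically to both $\hat u$ and $\hat b$; your version is a bit more streamlined but yields the same weights ($|k|^{2r+3}$ linear, $|k|^{2r+4}$ nonlinear from $\widehat{K_1}$) and the same threshold.
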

\begin{proof}
Applying Duhamel's principle to $\eqref{equation}_2$, we obtain
\begin{align}\label{le'a}
&\sum_{k \in \mathbb{Z}^n \setminus \{0\}} \int_{0}^{T} |k| |\hat{b}(t,k)| \, dt \leq I_{6} + I_{7} + I_{8} + I_{9},
\end{align}
where
\begin{align*}
&I_{6}:= \sum_{k \in \mathbb{Z}^n \setminus \{0\}} \int_{0}^{T} |k| e^{-|k|^2 t} |\hat{b}_0(k)| \, dt,\notag\\[1ex]
&I_{7}:= \sum_{k \in \mathbb{Z}^n \setminus \{0\}} \int_{0}^{T} \int_{0}^{t} |k| e^{-|k|^2 (t - \tau)} |\widehat{u \cdot \nabla b}(\tau,k)| \, d\tau dt,\notag\\[1ex]
&I_{8} := \sum_{k \in \mathbb{Z}^n \setminus \{0\}} \int_{0}^{T} \int_{0}^{t} |k| e^{-|k|^2 (t - \tau)} |\widehat{b \cdot \nabla u} (\tau,k)| \, d\tau dt,\notag\\[1ex]
&I_{9}:= \sum_{k \in \mathbb{Z}^n \setminus \{0\}} \int_{0}^{T} \int_{0}^{t} |k| e^{-|k|^2 (t - \tau)} |(\tilde{b} \cdot k) \hat{u} (\tau,k)| \, d\tau dt.
\end{align*}
It is straightforward to estimate $I_6$. Indeed, we have
$$
I_6 \leq \sum_{k \in \mathbb{Z}^n \setminus \{0\}} \int_0^T |k|^2 e^{-|k|^2 t} |\hat{b}_0| \, dt \leq \sum_{k \in \mathbb{Z}^n \setminus \{0\}} |\hat{b}_0(k)|,
$$
where we used the elementary inequality
\begin{align}\label{2.3}
\int_{0}^{T} a e^{-a t} dt\leq 1   \quad {\rm for  \quad all } \quad a\geq0,\, T\geq0.
\end{align}
Applying H\"{o}lder inequality and noting that $m > \frac{n}{2}$, we further deduce
\begin{align*}
I_6 \leq \sum_{k \in \mathbb{Z}^n \setminus \{0\}} |\hat{b}_0(k)|\leq \left( \sum_{k \in \mathbb{Z}^n \setminus \{0\}} |k|^{-2m} \right)^{\frac{1}{2}} \left( \sum_{k \in \mathbb{Z}^n \setminus \{0\}} |k|^{2m} |\hat{b}_0|^2 \right)^{\frac{1}{2}} \leq C \left\lVert b_0 \right\rVert_{\dot{H}^m},
\end{align*}
where $C > 0$ is a constant depending only on $n$ and $m$.

We now turn to the estimate of $I_{7}$. Since $\nabla \cdot u = 0$, we may rewrite
\[
|\widehat{u \cdot \nabla b}(t,k)|
= \big|\widehat{\nabla \cdot (u \otimes b)}(t,k)\big|
\leq |k|\,\big|(\hat{u} * \hat{b})(t,k)\big|,
\quad k \in \mathbb{Z}^n \setminus \{0\}.
\]
Substituting the above bound into the definition of $I_7$, and then applying Young inequality together with \eqref{2.3}, we obtain
\begin{align*}
I_{7} &\leq C\sum_{k \in \mathbb{Z}^n \setminus \{0\}} \int_{0}^{T} \int_{0}^{t} |k| e^{-|k|^2 (t - \tau)} |k|\, \left|(\hat{u}*\hat{b})(\tau,k)\right| \, d\tau dt\\[1ex]
&\leq C \sum_{k \in \mathbb{Z}^n \setminus \{0\}} \int_{0}^{T} \left|(\hat{u}*\hat{b}) (t,k)\right| \, d t.
\end{align*}
Next, applying Young inequality for convolutions in the spatial frequency variable \(k \in \mathbb{Z}^n\), and subsequently using H\"{o}lder inequality, we get
\begin{align*}
I_{7}&\leq C\int_{0}^{T} \sum_{k \in \mathbb{Z}^n \setminus \{0\}}  \left|(\hat{u}*\hat{b}) (t,k)\right| \, d t
\leq \int_{0}^{T} \sum_{k \in \mathbb{Z}^n \setminus \{0\}} |\hat{u} (t,k)| \sum_{k \in \mathbb{Z}^n \setminus \{0\}} |\hat{b} (t,k)| \, dt\\[1ex]
&\leq C \sup_{t \in [0, T]} \sum_{k \in \mathbb{Z}^n \setminus \{0\}} |\hat{u} (t,k)| \left( \sum_{k \in \mathbb{Z}^n \setminus \{0\}} \int_{0}^{T} |\hat{b} (t,k)| \, dt \right)\\[1ex]
&\leq C \sup_{t \in [0, T]} \| u(t) \|_{H^m} \left( \sum_{k \in \mathbb{Z}^n \setminus \{0\}} \int_{0}^{T} |k| |\hat{b} (t,k)| \, dt \right),
\end{align*}
where we have used the Poincar\'e inequality in the last step since \(k \ne 0\).

Similarly, for $I_{8}$, we have
\begin{align*}
I_{8}\leq C \int_{0}^{T}  \sum_{k \in \mathbb{Z}^n \setminus \{0\}}\left| (\hat{b}*\hat{u}) (t,k)\right| \, dt\leq C \sup_{t \in [0, T]} \left\lVert u  \right\rVert_{H^m} \left( \sum_{k \in \mathbb{Z}^n \setminus \{0\}} \int_{0}^{T} |k||\hat{b} (t,k)| \, dt \right).
 \end{align*}

For $I_9$,  we can see by Fubini's theorem and \eqref{2.3} that
\begin{align*}
I_9 \leq C(|\tilde{b}|) \sum_{k \in \mathbb{Z}^n \setminus \{0\}} \int_{0}^{T} \left| \hat{u}(t,k) \right| \, dt \leq C(|\tilde{b}|) \sum_{k \in \mathbb{Z}^n \setminus \{0\}} \int_{0}^{T} |k| \left|\hat{u}(t,k) \right| \, dt.
\end{align*}
Recall $\eqref{2.4s}_1$ and write
\begin{align}
&\sum_{k \in \mathbb{Z}^n \setminus \{0\}} \int_{0}^{T} |k||\hat{u}(t,k)| \, dt\notag\\[1ex]
\leq& \sum_{k \in \mathbb{Z}^n \setminus \{0\}} \int_{0}^{T} |k||\widehat{K_1}||\hat{\boldsymbol{\psi}}_0|\, dt+\sum_{k \in \mathbb{Z}^n \setminus \{0\}} \int_{0}^{T} \int_{0}^{t}
|k||\widehat{K_1}(t-\tau)||\widehat{N}(\tau)| \, d\tau dt\notag\\[1ex]
&+\sum_{k \in \mathbb{Z}^n \setminus \{0\}} \int_{0}^{T} |k||\widehat{K_3}||\hat{\boldsymbol{\psi}}_0|  \, dt+\sum_{k \in \mathbb{Z}^n \setminus \{0\}} \int_{0}^{T} \int_{0}^{t}
|k||\widehat{K_3}(t-\tau)||\widehat{N_1}(\tau)| \, d\tau dt\notag\\[1ex]
=:&I_{91} + I_{92}+ I_{93} + I_{94}.\label{le'''}
\end{align}
We begin by estimating \(I_{93}\). Proposition \ref{pro2} and \eqref{2.3} give
\begin{align*}
I_{93} \leq \sum_{k \in \mathbb{Z}^n \setminus \{0\}} \int_0^T |k| e^{-\frac{|k|^2}{2} t} |\hat{\boldsymbol{\psi}}_0(k)| \, dt \leq C \| \boldsymbol{\psi}_0 \|_{H^m}.
\end{align*}
For the term \(I_{94}\), we apply Proposition \ref{pro2} and proceed similarly to the estimate of \(I_{7}\), which yields
\begin{align*}
I_{94}
&\leq C \int_0^T \sum_{k \in \mathbb{Z}^n \setminus \{0\}}
\left( |(\hat{u} * \hat{u})(t,k)| + |(\hat{b} * \hat{b})(t,k)| \right) \, dt \\[1ex]
&\leq C \sup_{t \in [0, T]} \| (u, b)(t) \|_{H^m}\left( \int_0^T \sum_k |k| |\hat{u}(t,k)| \, dt + \int_0^T \sum_k |k||\hat{b}(t,k)| \, dt \right).
\end{align*}
We now estimate \(I_{91}\). By Proposition \ref{pro2}, the Diophantine condition, and the Poincar\'e inequality, we derive
\begin{align*}
I_{91}
&\leq \sum_{k \in S_1 \cup S_2} \int_0^T |k||\widehat{K_1}(t,k)|\, |\hat{\boldsymbol{\psi}}_0(k)| \, dt
+ \sum_{k \in S_3} \int_0^T|k| |\widehat{K_1}(t,k)|\, |\hat{\boldsymbol{\psi}}_0(k)| \, dt \\[1ex]
&\leq C \sum_{k \in S_1 \cup S_2} \int_0^T |k| e^{-\frac{|k|^2}{8} t} |\hat{\boldsymbol{\psi}}_0(k)| \, dt
+ C \sum_{k \in S_3} \int_0^T |k| e^{-\frac{|\tilde{b}\cdot k|^2}{|k|^2} t} |\hat{\boldsymbol{\psi}}_0(k)| \, dt  \\[1ex]
&\leq C \sum_{k \in S_1 \cup S_2} |\hat{\boldsymbol{\psi}}_0(k)|
+ C \sum_{k \in S_3} \frac{|k|^3}{|\tilde{b} \cdot k|^2} \left( \int_0^T \frac{|\tilde{b} \cdot k|^2}{|k|^2} e^{-\frac{|\tilde{b} \cdot k|^2}{|k|^2} t} dt \right) |\hat{\boldsymbol{\psi}}_0(k)|\\[1ex]
&\leq C \sum_{k \in \mathbb{Z}^n \setminus \{0\}} |\hat{\boldsymbol{\psi}}_0(k)|
+ C \sum_{k \in \mathbb{Z}^n \setminus \{0\}} |k|^{3+2r} |\hat{\boldsymbol{\psi}}_0(k)| \\[1ex]
&\leq C \| \boldsymbol{\psi}_0 \|_{H^m}.
\end{align*}
Finally, to estimate $I_{92}$, we employ Proposition \ref{pro2} in conjunction with \eqref{1.411}, \eqref{1.412}, and then apply Young, Poincar\'e, and H\"older inequalities, which yield for $m > 4 + 2r + \tfrac{n}{2}$
\begin{align*}
I_{92}
&\leq  \sum_{k \in S_1 \cup S_2} \int_0^T \int_0^t|k| |\widehat{K_1}(t-\tau,k)|\, |\widehat{N}(\tau,k)| \, d\tau dt\\[1ex]
&\quad+ \sum_{k \in S_3} \int_0^T|k| \int_0^t |\widehat{K_1}(t-\tau,k)|\, |\widehat{N}(\tau,k)| \, d\tau dt \\[1ex]
&\leq C \sum_{k \in S_1 \cup S_2} \int_0^T \int_0^t |k|e^{-\frac{|k|^2}{8}(t-\tau)} |\widehat{N}(\tau,k)| \, d\tau dt
+ C \sum_{k \in S_3} \int_0^T |k|^{3+2r} |\widehat{N}(t,k)| \, dt \\[1ex]
&\leq C \sum_{k \in S_1 \cup S_2} \int_0^T \int_0^t e^{-\frac{|k|^2}{8}(t-\tau)} |k|
\left( |\hat{u} * \hat{u}| + |\hat{b} * \hat{b}| + |\hat{u} * \hat{b}| \right)(\tau,k) \, d\tau dt \\[1ex]
&\quad + C \int_0^T \sum_{k \in S_3} |k|^{4+2r}
\left( |\hat{u} * \hat{u}| + |\hat{b} * \hat{b}| + |\hat{u} * \hat{b}| \right)(t,k) \, dt \\[1ex]
&\leq C \int_0^T \sum_{k \in \mathbb{Z}^n \setminus \{0\}} |k|^{4+2r}
\left( |\hat{u} * \hat{u}| + |\hat{b} * \hat{b}| + |\hat{u} * \hat{b}| \right)(t,k) \, dt \\[1ex]
&\leq C \sup_{t \in [0, T]} \| (u, b)(t) \|_{H^m} \left( \int_0^T \sum_{k \in \mathbb{Z}^n \setminus \{0\}} |k| |\hat{u}(t,k)| \, dt + \int_0^T \sum_{k \in \mathbb{Z}^n \setminus \{0\}} |k||\hat{b}(t,k)| \, dt \right).
\end{align*}
Combining the above estimate into \eqref{le'''}, which completes the proof of Proposition \ref{prop6.1a}.
\end{proof}

\subsection{Temporal decay estimates}\label{4/3}
In this subsection, we establish the temporal decay part of Theorem \ref{thm.2}.
We first focus on deriving the temporal decay of $\|(u, b)(t)\|_{L^2(\mathbb{T}^n)}$.
Once the $L^2$ decay is established, higher-order norms can be controlled using standard interpolation inequalities together with the uniform bound \eqref{Linftyab}, which then yields the decay estimate stated in \eqref{finaldecay111}.
\begin{prop}\label{prop6.6}
Let $n\geq 2$ and $m> 4 + 2r + \frac{n}{2}$ with $r>n-1$. Let $(u, b)$ be a smooth global-in-time solution to \eqref{equation} with $\mu =0$ and $\nu = 1$. Then there exists a constant $C > 0$ such that
\begin{align*}
\|u\|_{L^2(\mathbb{T}^n)}^2+\|b\|_{L^2(\mathbb{T}^n)}^2\leq  C (1+t)^{-\frac{m}{1+r}}.
\end{align*}
\end{prop}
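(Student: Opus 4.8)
The plan is to run a continuity (bootstrap) argument on the weighted quantity $\mathcal{M}(t):=\sup_{0\le \tau\le t}(1+\tau)^{\frac{m}{1+r}}\,\|(u,b)(\tau)\|_{L^2}^2$, feeding the integral representation of Proposition~\ref{cor:mu0nu1} into the analysis and exploiting the already-proven uniform bound \eqref{Linftyab}. Applying Plancherel at $s=0$ to the two lines of \eqref{2.4s} gives $\|u(t)\|_{L^2}^2\lesssim \sum_{k\ne 0}\bigl(|\widehat{K_1}|^2+|\widehat{K_3}|^2\bigr)|\hat{\boldsymbol\psi}_0|^2+\sum_{k\ne 0}\bigl|\int_0^t\bigl(|\widehat{K_1}(t-\tau)|\,|\widehat N(\tau)|+|\widehat{K_3}(t-\tau)|\,|\widehat{N_1}(\tau)|\bigr)\,d\tau\bigr|^2$, and the analogous bound for $\|b(t)\|_{L^2}^2$ with $K_1,N_1$ replaced by $K_2,N_2$. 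The linear sums are estimated exactly as in the proof of Theorem~\ref{thm1.1}: the kernel bounds of Proposition~\ref{pro2}, the Diophantine condition \eqref{Diophantine}, and the elementary inequality \eqref{ye} give $\sum_{k\ne 0}\bigl(|\widehat{K_1}|^2+|\widehat{K_3}|^2\bigr)|\hat{\boldsymbol\psi}_0|^2\le C(1+t)^{-\frac{m}{1+r}}\|(u_0,b_0)\|_{H^m}^2$ (the $K_2$-contribution in the $b$-equation decays strictly faster). Hence everything reduces to controlling the nonlinear Duhamel terms.

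For the nonlinear part I would use that $\operatorname{div}u=\operatorname{div}b=0$ forces $|\widehat N(\tau,k)|\le C|k|\bigl(|\hat u*\hat u|+|\hat b*\hat b|+|\hat u*\hat b|\bigr)(\tau,k)$, together with the convolution bounds $|(\hat f*\hat g)(\tau,k)|\le \|f(\tau)\|_{L^2}\|g(\tau)\|_{L^2}$ (Cauchy--Schwarz) and the weighted version $|(\hat f*\hat g)(\tau,k)|\le C(1+|k|)^{-s}\|f(\tau)\|_{H^s}\|g(\tau)\|_{L^2}$ (low/high frequency splitting). Taking $\ell^2_k$ norms via Minkowski's integral inequality and splitting $\int_0^t=\int_0^{t/2}+\int_{t/2}^t$, the $[0,t/2]$ piece is handled by extracting genuine time decay from the kernels (Proposition~\ref{pro2}, \eqref{Diophantine} and $y^\theta e^{-y}\le C_\theta$), while on $[t/2,t]$ the degenerate diffusion provides no smoothing and one instead uses the integrated-kernel estimate $\int_0^\infty|\widehat{K_1}(s,k)|\,ds\le C\bigl((1+|k|)^{-2}\mathbf{1}_{S_1\cup S_2}+(1+|k|)^{2r+2}\mathbf{1}_{S_3}\bigr)$ — this is precisely where the Diophantine condition enters — together with the fact that the nonlinearity is there evaluated at times comparable to $t$. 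All intermediate norms $\|(u,b)(\tau)\|_{H^s}$ that appear are dominated by interpolating the bootstrap bound $\|(u,b)(\tau)\|_{L^2}^2\le \mathcal M(t)(1+\tau)^{-\frac{m}{1+r}}$ against the uniform bound \eqref{Linftyab}, with the calculus inequality of Lemma~\ref{jiaohuanzi} used for the products. Collecting the estimates yields an inequality of the schematic form $\mathcal M(t)\le C\|(u_0,b_0)\|_{H^m}^2+C\varepsilon\,\mathcal M(t)+C\,\mathcal M(t)^2$, so that the smallness \eqref{smallcondition} and a standard continuity argument give $\mathcal M(t)\le C\|(u_0,b_0)\|_{H^m}^2$, which is the assertion; the hypothesis $m>4+2r+\tfrac n2$ (with $r>n-1$) is exactly what makes the resulting frequency sums convergent and the $\tau$-integrals integrable once the weights $(1+|k|)^{2r+2}$, $|k|$ and the negative powers of $(1+\tau)$ are accounted for.

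The main obstacle is the nonlinear Duhamel estimate, and within it, closing the bootstrap at the sharp rate $m/(1+r)$. The delicate segment is $[t/2,t]$: because the diffusion is absent from the $u$-equation and degenerate in the frequency variable for $b$, no smoothing is available there, so the decay must come entirely from the time-decay of the solution itself, and the high-frequency sum over $S_3$ (weighted by $(1+|k|)^{2r+2}$ from the integrated kernel) is only barely summable — this is what pins down the threshold on $m$. If a single pass does not quite reach the exponent $m/(1+r)$, one iterates the scheme finitely many times, each iteration raising the decay exponent by a fixed increment until $m/(1+r)$ is attained; equivalently, one may run the continuity argument simultaneously over the whole family of weights $(1+\tau)^{\frac{m-s}{1+r}}\|(u,b)(\tau)\|_{H^s}^2$, $0\le s\le m$, which is the formulation best suited to the interpolation step (and one may also invoke the Diophantine--Poincaré inequality of Lemma~\ref{2.1} on the low Sobolev side to recover $\|u\|_{L^2}$ from $\|\tilde b\cdot\nabla u\|_{H^r}$ read off the $b$-equation). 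Once Proposition~\ref{prop6.6} is established, the full range $s\in[0,m]$ in \eqref{finaldecay111} follows immediately by interpolating this $L^2$ decay against \eqref{Linftyab}.
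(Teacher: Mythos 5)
You take a genuinely different route from the paper.  The paper does \emph{not} use the Duhamel representation of Proposition~\ref{cor:mu0nu1} for the $L^2$ decay; it runs a time-weighted energy method on the modified functional $F(t):=A\|(u,b)\|_{L^2}^2-\int_{\mathbb{T}^n}(\tilde b\cdot\nabla u)\cdot\Lambda^{-2}b\,dx$.  The cross term (Proposition~\ref{prop3.21}) produces the weak dissipation $-\tfrac12\|\Lambda^{-1}(\tilde b\cdot\nabla u)\|_{L^2}^2$, which Lemma~\ref{2.1} converts into $-\|\Lambda^{-1-r}u\|_{L^2}^2$; then a Schonbek-type Fourier splitting with a \emph{time-dependent} cutoff $M(t)=A+\tfrac{c^*t}{8j}$ gives $\tfrac{A}{M}\|u\|_{L^2}^2-\|\Lambda^{-1-r}u\|_{L^2}^2\lesssim M^{-1-\frac{m}{1+r}}\|\Lambda^m u\|_{L^2}^2$, leading to a differential inequality for $F(t)$ that is closed by multiplying by $M^j$ and applying Gr\"onwall together with the already-proven $L^1_t L^\infty_x$ bound \eqref{Linfty}.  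Crucially, all nonlinear terms enter this scheme through commutator estimates of the form $\big(\|\nabla u\|_{L^\infty}+\|\nabla b\|_{L^\infty}\big)\|(u,b)\|_{L^2}^2$, so there is no derivative loss.

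Your proposal instead plugs the Duhamel formula \eqref{2.4s} into $L^2$ and bootstraps $\mathcal M(t):=\sup_{\tau\le t}(1+\tau)^{\frac{m}{1+r}}\|(u,b)(\tau)\|_{L^2}^2$.  Here there is a genuine gap on the segment $[0,t/2]$ that neither the acknowledged "iteration" nor the simultaneous-$H^s$ bootstrap repairs.  On $S_3$ the kernel $\widehat{K_1}$ decays only like $e^{-\alpha_k\sigma}$ with $\alpha_k=(\tilde b\cdot k)^2/|k|^2\gtrsim c^2|k|^{-2-2r}$, so extracting $\sigma^{-\rho/(1+r)}$ of time decay from the kernel costs $|k|^{2\rho}$ in frequency; the quadratic nonlinearity $\widehat N$ carries one net derivative on $u$, and since $\mu=0$ there is no smoothing on $u$ to absorb it, so $\|N(\tau)\|_{\dot H^{\rho}}$ can only be controlled up to $\rho\le m-1$ (indeed $\|u\cdot\nabla u\|_{\dot H^m}$ would require $\|u\|_{\dot H^{m+1}}$).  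After Minkowski this yields at best a factor $\sigma^{-\frac{m-1}{2(1+r)}}$ from the kernel, and after squaring and weighting by $(1+t)^{\frac{m}{1+r}}$ one is left with $(1+t)^{\frac{1}{1+r}}$ of uncompensated growth.  The iteration you mention converges to a fixed point $\beta_*=\frac{m-1}{1+r}<\frac{m}{1+r}$, not to the stated sharp rate.  This loss of one derivative is exactly what the commutator identity $\int\Lambda^s(u\cdot\nabla u)\cdot\Lambda^s u\,dx=\int[\Lambda^s,u\cdot\nabla]u\cdot\Lambda^s u\,dx$ avoids, and it is why the paper works with the energy functional rather than the Duhamel representation for the $L^2$ decay (it uses Duhamel only for the $\ell^1_k L^1_t$ bound of Proposition~\ref{prop6.1a}, where Young's convolution inequality in $\ell^1_k$ sidesteps the issue).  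To rescue your approach you would need to recover the commutator cancellation on the Fourier side, which your sketch does not do.
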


\begin{proof}
Since $\nabla\cdot u=\nabla \cdot b=0$, the $L^2$ energy identity reads
\begin{align}\label{L2}
\frac{d}{dt}\left(\| u(t)\|_{L^2}^2+\| b(t)\|_{L^2}^2\right) = -2\|\nabla b(t)\|_{L^2}^2.
\end{align}
To capture additional dissipation, we will make use of the following cross-term estimate:
\begin{align}\label{key1b-intro}
-\frac{d}{dt}\int_{\mathbb{T}^n} (\tilde{b} \cdot \nabla u) \cdot \Lambda^{-2} b \, dx
&\leq \left( \frac{1}{2} + |\tilde{b}|^2 \right) \|\nabla b\|_{L^2}^2
- \frac{1}{2} \|\Lambda^{-1}(\tilde{b}\cdot\nabla u)\|_{L^2}^2 \notag\\
&\quad + C\left( \|\nabla b\|_{L^\infty} + \|\nabla u\|_{L^\infty} \right) \left( \|b\|_{L^2}^2 + \|u\|_{L^2}^2 \right),
\end{align}
whose precise statement and proof are deferred to Proposition \ref{prop3.21} at the end of this subsection.

Multiplying \eqref{L2} by $A := \frac{1}{2}(1+|\tilde{b}|+|\tilde{b}|^2)$ and adding the result to \eqref{key1b-intro}, we arrive at
\begin{align}
	&\frac{d}{dt}\left(A\| (u, b)\|_{L^2}^2-\int_{\mathbb{T}^n}\left(\tb\cdot\nabla u\right)\cdot \Lambda^{-2} b\, dx\right)\notag\\\leq &-\frac{1}{2}\left(\|\nabla b\|_{L^2}^2+\|\Lambda^{-1}(\tb\cdot\nabla u)\|_{L^2}^2\right)+
 C\left(\|\nabla u\|_{L^\infty}+\|\nabla b\|_{L^\infty}\right)\|(u, b)\|_{L^2}^2.\label{energy2}
\end{align}
On the other hand, applying Lemmas \ref{2.1}, \ref{lem:zero-mean-fractional} and Poincar\'e inequality  yield
\begin{align*}
\|\Lambda^{-1-r} u\|_{L^2}
\leq c\,\|\tilde{b}\cdot\nabla(\Lambda^{-1-r}u)\|_{H^r}
\leq c\,\|\Lambda^{-1-r}(\tilde{b}\cdot\nabla u)\|_{\dot{H}^r} = c\,\|\Lambda^{-1}(\tilde{b}\cdot\nabla u)\|_{L^2}.
\end{align*}
By Poincar\'e inequality and \eqref{meanzero2}, we have
\[
\|b\|_{L^2} \leq \|\nabla b\|_{L^2}.
\]
Consequently, the differential inequality \eqref{energy2} can be refined as
\begin{align}\label{6.4a}
&\frac{d}{dt} \left(A\left(\| (u, b)\|_{L^2}^2\right)-\int_{\mathbb{T}^n}\left(\tb\cdot\nabla u\right)\cdot \Lambda^{-2} b\, dx\right)\notag\\
\leq& -\frac{c^*}{4} \left( \left\lVert b \right\rVert_{L^2}^2 + \left\lVert \Lambda^{-1-r} u \right\rVert_{L^2}^2 \right)
+C\left(\|\nabla u\|_{L^\infty}+\|\nabla b\|_{L^\infty}\right)\|(u, b)\|_{L^2}^2,
\end{align}
where $c^* := \min\{1,c\}$ and $c$ is the constant appearing in \eqref{Diophantine}.

Let \(M>0\) be a parameter to be fixed later such that \(\tfrac{A}{M}\leq 1\). By Plancherel's theorem,
\begin{align*}
	&\frac{A}{M}\| u\|_{L^2}^2-\|\Lambda^{-1-r} u\|_{L^2}^2=\sum_{|k|\neq 0}\left(\frac{A}{M}-|k|^{-2-2r}\right)\left|\hat{u}(k)\right|^2\\
	\leq& \frac{A}{M}\sum_{\frac{A}{M}\geq |k|^{-2r-2}}\left|\hat{u}(k)\right|^2\notag\\
	\leq& \left(\frac{A}{M}\right)^{1+\frac{m}{1+r}}\sum_{\frac{A}{M}\geq |k|^{-2r-2}}\left(\dfrac{M}{A}\right)^{\frac{m}{1+r}}
\left(\frac{1}{|k|}\right)^{2m}|k|^{2m}\left|\hat{u}(k)\right|^2\notag\\
	\leq& \left(\frac{A}{M}\right)^{1+\frac{m}{1+r}}\|\Lambda^{m}u\|^2_{L^2}.
\end{align*}
Moreover, H\"older, Poincar\'e and  Young inequalities yield
\begin{align}\label{916}
\left| \int_{\mathbb{T}^n} (\tilde{b} \cdot \nabla u) \cdot \Lambda^{-2} b \, dx \right|
\leq |\tilde{b}| \cdot \lVert u \rVert_{L^2} \cdot \lVert \Lambda^{- 1} b \rVert_{L^2} \leq \frac{|\tilde{b}|}{2} \left( \lVert  u \rVert_{L^2}^2 + \lVert  b \rVert_{L^2}^2 \right).
\end{align}
Then, the first term on the right-hand side of \eqref{6.4a} turns to
\begin{align}\label{6.4aa}
&- \frac{c^{*}}{4} \left( \left\lVert b \right\rVert_{L^2}^2 + \left\lVert \Lambda^{-1-r} u \right\rVert_{L^2}^2 \right)\notag\\
= &- \frac{c^{*}}{4} \left\lVert b \right\rVert_{L^2}^2 - \frac{c^* A}{4M} \left\lVert u \right\rVert_{L^2}^2
+ \frac{c^*}{4} \left( \frac{A}{M} \left\lVert  u \right\rVert_{L^2}^2 - \left\lVert \Lambda^{-1 - r} u \right\rVert_{L^2}^2 \right) \notag \\
\leq& - \frac{c^* A}{4 M} \left( \left\lVert b \right\rVert_{L^2}^2 + \left\lVert u \right\rVert_{L^2}^2 \right)
+\frac{C}{M^{1+\frac{m}{1+r}}}\|\Lambda^{m}u\|^2_{L^2}\notag \\
\leq &- \frac{c^*}{8 M}F(t)
+ \frac{C}{M^{1+\frac{m}{1+r}}}\|\Lambda^{m}u\|^2_{L^2},
\end{align}
where we set
\[
F(t):= A\|(u,b)\|_{L^2}^2 - \int_{\mathbb{T}^n}(\tb\cdot\nabla u)\cdot \Lambda^{-2}b\,dx.
\]
Substituting   \eqref{6.4aa} into \eqref{6.4a} gives
\begin{align}\label{6.6a}
\frac{d}{dt} F(t)
\leq - \frac{c^*}{8 M } F(t)
+\frac{C}{M^{1+\frac{m}{1+r}}}\|\Lambda^{m}u\|^2_{L^2}
+ C\left(\|\nabla u\|_{L^\infty}+\|\nabla b\|_{L^\infty}\right)\|(u, b)\|_{L^2}^2.
\end{align}
Choosing
\[
M=A+\frac{c^*t}{8j},\qquad j>\frac{m}{1+r},
\]
and multiplying \eqref{6.6a} by \(M^j\) together with \eqref{916} yields
\begin{align*}
\frac{d}{dt}\big(M^jF(t)\big)
\leq C M^{j-1-\frac{m}{1+r}}\|\Lambda^m u\|_{L^2}^2
+C M^jF(t)(\|\nabla u\|_{L^\infty}+\|\nabla b\|_{L^\infty}).
\end{align*}
Integrating over \([0,t]\), we deduce
\begin{align}
M^jF(t)\leq &A^jF(0)+C\int_{0}^tM^{j-1-\frac{m}{1+r}}\|\Lambda^{m}u\|^2_{L^2}\, d \tau\notag\\
&+C\int_{0}^tM^jF(\tau)\left(\|\nabla u(\tau)\|_{L^\infty}+\|\nabla b(\tau)\|_{L^\infty}\right)\, d\tau.\label{gp.5.25-opt}
\end{align}
Multiplying \eqref{gp.5.25-opt} by \((1+t)^{\frac{m}{1+r}-j}\) and using  \eqref{Linftyab}, we obtain
\begin{align*}
	&(1+t)^{\frac{m}{1+r}}F(t)\notag\\
\leq &C(1+t)^{\frac{m}{1+r}-j}A^jF(0)+C(1+t)^{\frac{m}{1+r}-j}
\int_{0}^t(A+\dfrac{c^*\tau}{8j})^{j-1-\frac{m}{1+r}}\|\Lambda^{m}u\|^2_{L^2}\, d \tau\notag\\&+C(1+t)^{\frac{m}{1+r}-j}\int_{0}^t(A+\dfrac{c^*\tau}{8j})^jF(\tau)\left(\|\nabla u(\tau)\|_{L^\infty}+\|\nabla b(\tau)\|_{L^\infty}\right)\, d\tau\notag\\
\leq &C+C(1+t)^{\frac{m}{1+r}-j+j-\frac{m}{1+r}}\sup_{0\leq\tau\leq t}\|\Lambda^{m}u(\tau)\|_{L^2}^2\notag\\&+C(1+t)^{\frac{m}{1+r}-j}(1+t)^{j-\frac{m}{1+r}}\int_{0}^t(1+\tau)^{\frac{m}{1+r}}F(\tau)\left(\|\nabla u(\tau)\|_{L^\infty}+\|\nabla b(\tau)\|_{L^\infty}\right)\, d\tau\notag\\\leq &C+C\int_{0}^t(1+\tau)^{\frac{m}{1+r}}F(\tau)\left(\|\nabla u(\tau)\|_{L^\infty}+\|\nabla b(\tau)\|_{L^\infty}\right)\, d\tau.
\end{align*}
Moreover, \eqref{Linfty} yields
\[
\int_0^t(\|\nabla u(\tau)\|_{L^\infty}+\|\nabla b(\tau)\|_{L^\infty})\,d\tau\leq C.
\]
It then follows from Gr\"onwall's inequality that
\[
(1+t)^{\frac{m}{1+r}}F(t)\leq C.
\]
Recalling the definition of \(F(t)\) and using \eqref{916}, we have
\[
F(t)\geq \frac{1}{2}\|(u,b)(t)\|_{L^2}^2.
\]
Consequently,
\begin{align}\label{final-L2}
\|u(t)\|_{L^2}^2+\|b(t)\|_{L^2}^2 \leq C(1+t)^{-\frac{m}{1+r}}.
\end{align}
This concludes the proof of Proposition \ref{prop6.6}.
\end{proof}
Finally, interpolation between \eqref{final-L2} and the uniform \(H^m\)-bound \eqref{Linftyab} yields, for any \(s\in[0,m]\),
\[
\|u(t)\|_{H^s} + \|b(t)\|_{H^s} \leq C (1+t)^{-\frac{m-s}{2r+2}}.
\]
Hence, Theorem \ref{thm.2} follows.

To conclude this subsection, we provide an auxiliary estimate \eqref{key1b-intro} that was used in the proof of Proposition \ref{prop6.6}.
\begin{prop}\label{prop3.21}
Let $(u,b)$ be a smooth global solution to $\eqref{equation}$ with $\mu = 0$ and $\nu = 1$. Then
\begin{align}\label{key1b}
	-\frac{d}{d t}\int_{\mathbb{T}^n}\left(\tb\cdot\nabla u\right)\cdot \Lambda^{-2} b\,dx&\leq \left( \frac{1}{2} + |\tilde{b}|^2 \right)\left\|\nabla b\right\|^2_{L^2}
-\frac{1}{2}\left\|\Lambda^{-1}\left(\tb\cdot\nabla u\right)\right\|^2_{L^2} \notag\\
	&\quad+C\left(\|\nabla b\|_{L^\infty}+\|\nabla u\|_{L^\infty}\right) \left(\| b\|_{L^2}^2+\| u\|_{L^2}^2\right).
\end{align}
\end{prop}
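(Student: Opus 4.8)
The plan is to differentiate the cross functional $J(t):=\int_{\mathbb{T}^n}(\tilde b\cdot\nabla u)\cdot\Lambda^{-2}b\,dx$ in time and substitute the evolution equations. Writing $\tfrac{d}{dt}J=\int(\tilde b\cdot\nabla\partial_t u)\cdot\Lambda^{-2}b\,dx+\int(\tilde b\cdot\nabla u)\cdot\Lambda^{-2}\partial_t b\,dx$, one uses that $\tilde b\cdot\nabla$ is a constant-coefficient, skew-adjoint operator on $\mathbb{T}^n$ that commutes with $\Lambda^{-2}$; integrating it by parts in the first term gives $\int(\tilde b\cdot\nabla\partial_t u)\cdot\Lambda^{-2}b\,dx=-\int\partial_t u\cdot\Lambda^{-2}(\tilde b\cdot\nabla b)\,dx$. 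Now substitute $\partial_t u=-u\cdot\nabla u-\nabla p+\tilde b\cdot\nabla b+b\cdot\nabla b$ (momentum equation with $\mu=0$) and $\partial_t b=\Delta b-u\cdot\nabla b+\tilde b\cdot\nabla u+b\cdot\nabla u$ (induction equation with $\nu=1$). The pressure term disappears because $\nabla\cdot[\Lambda^{-2}(\tilde b\cdot\nabla b)]=\Lambda^{-2}\big(\tilde b\cdot\nabla(\nabla\cdot b)\big)=0$; the diffusion term contributes $\int(\tilde b\cdot\nabla u)\cdot\Lambda^{-2}\Delta b\,dx=-\int(\tilde b\cdot\nabla u)\cdot b\,dx$ since $\Lambda^{-2}\Delta b=-b$ on mean-zero functions; and the linear coupling terms produce $-\|\Lambda^{-1}(\tilde b\cdot\nabla b)\|_{L^2}^2$ and $+\|\Lambda^{-1}(\tilde b\cdot\nabla u)\|_{L^2}^2$.

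\noindent\textbf{Producing the right-hand side.} Passing to $-\tfrac{d}{dt}J$, the quantity $\|\Lambda^{-1}(\tilde b\cdot\nabla u)\|_{L^2}^2$ appears with a minus sign; I would spend half of it on the diffusion remainder, estimating $\int(\tilde b\cdot\nabla u)\cdot b\,dx=\int\Lambda^{-1}(\tilde b\cdot\nabla u)\cdot\Lambda b\,dx$ (legitimate as both factors have zero mean) by Cauchy--Schwarz and Young to obtain $\le\tfrac12\|\Lambda^{-1}(\tilde b\cdot\nabla u)\|_{L^2}^2+\tfrac12\|\nabla b\|_{L^2}^2$. The term $\|\Lambda^{-1}(\tilde b\cdot\nabla b)\|_{L^2}^2$ is bounded on the Fourier side by $|\tilde b|^2\|b\|_{L^2}^2\le|\tilde b|^2\|\nabla b\|_{L^2}^2$ using $|\tilde b\cdot k|\le|\tilde b||k|$ and the Poincar\'e inequality. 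Together these give the claimed $\big(\tfrac12+|\tilde b|^2\big)\|\nabla b\|_{L^2}^2-\tfrac12\|\Lambda^{-1}(\tilde b\cdot\nabla u)\|_{L^2}^2$.

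\noindent\textbf{The nonlinear remainders.} Four cubic terms remain: $\int u\cdot\nabla u\cdot\Lambda^{-2}(\tilde b\cdot\nabla b)\,dx$ and $\int b\cdot\nabla b\cdot\Lambda^{-2}(\tilde b\cdot\nabla b)\,dx$ from the first integral, and $\int(\tilde b\cdot\nabla u)\cdot\Lambda^{-2}(u\cdot\nabla b)\,dx$ and $\int(\tilde b\cdot\nabla u)\cdot\Lambda^{-2}(b\cdot\nabla u)\,dx$ from the second. For the first two, use that the multiplier $\Lambda^{-2}(\tilde b\cdot\nabla)$ has symbol of size $\le|\tilde b|/|k|$, so $\|\Lambda^{-2}(\tilde b\cdot\nabla b)\|_{L^2}\le|\tilde b|\,\|b\|_{L^2}$, combined with $\|w\cdot\nabla w\|_{L^2}\le\|\nabla w\|_{L^\infty}\|w\|_{L^2}$. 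For the last two, integrate $\tilde b\cdot\nabla$ off $u$ by parts and rewrite the transport terms as divergences, $u\cdot\nabla b=\nabla\cdot(u\otimes b)$, $b\cdot\nabla u=\nabla\cdot(b\otimes u)$ (valid since $\nabla\cdot u=\nabla\cdot b=0$); the operator $\Lambda^{-2}(\tilde b\cdot\nabla)\nabla\cdot$ has bounded symbol $-(\tilde b\cdot k)k_j/|k|^2$, so e.g. $\big|\int(\tilde b\cdot\nabla u)\cdot\Lambda^{-2}(b\cdot\nabla u)\,dx\big|\le C|\tilde b|\,\|u\|_{L^2}\|b\otimes u\|_{L^2}\le C|\tilde b|\,\|b\|_{L^\infty}\|u\|_{L^2}^2$, and one finishes with $\|b\|_{L^\infty}\le C\|\nabla b\|_{L^\infty}$, which holds for mean-zero functions on $\mathbb{T}^n$ (intermediate value theorem plus the mean value theorem, applied componentwise). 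All four are thus $\le C(\|\nabla u\|_{L^\infty}+\|\nabla b\|_{L^\infty})(\|u\|_{L^2}^2+\|b\|_{L^2}^2)$, and summing the contributions yields \eqref{key1b}.

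\noindent\textbf{Main obstacle.} The only genuinely delicate point is to ensure no real derivative ever falls on $u$ --- in particular that the factor $\tilde b\cdot\nabla u$ is never estimated through $\|\nabla u\|_{L^2}$, which is uncontrolled when $\mu=0$. This is handled by exploiting that $\tilde b\cdot\nabla$ is constant-coefficient, hence freely moved by parts and absorbed, together with $\Lambda^{-2}$ (or $\Lambda^{-2}\nabla\cdot$ after using incompressibility), into operators bounded on $L^2$; once this structural observation is made, the remaining steps are routine H\"older, Young and Plancherel estimates.
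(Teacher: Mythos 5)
Your proposal is correct and follows essentially the same route as the paper's proof: the time derivative of the cross functional is split according to $\partial_t u$ and $\partial_t b$, the linear coupling terms produce $-\|\Lambda^{-1}(\tilde b\cdot\nabla u)\|_{L^2}^2$ and $\|\Lambda^{-1}(\tilde b\cdot\nabla b)\|_{L^2}^2\le|\tilde b|^2\|\nabla b\|_{L^2}^2$, the diffusion cross-term $\int(\tilde b\cdot\nabla u)\cdot b\,dx$ is handled by Cauchy--Schwarz and Young at the cost of $\tfrac12\|\Lambda^{-1}(\tilde b\cdot\nabla u)\|_{L^2}^2+\tfrac12\|\nabla b\|_{L^2}^2$, and the cubic remainders are bounded through order-zero Fourier multipliers and $L^\infty$-bounds on gradients. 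The only (cosmetic) deviation is in the $\partial_t b$-side cubic terms: the paper simply keeps $\Lambda^{-2}(\tilde b\cdot\nabla u)$ intact and bounds $\|u\cdot\nabla b\|_{L^2}\le\|\nabla b\|_{L^\infty}\|u\|_{L^2}$ and $\|b\cdot\nabla u\|_{L^2}\le\|\nabla u\|_{L^\infty}\|b\|_{L^2}$ directly, which makes your extra integration by parts, the divergence rewriting, and the auxiliary inequality $\|b\|_{L^\infty}\le C\|\nabla b\|_{L^\infty}$ unnecessary (though all of those steps are valid).
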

\begin{proof}
By straightforward calculations, we have
\begin{align}\label{J12}
-\frac{d}{dt} \int_{\mathbb{T}^n} (\tilde{b} \cdot \nabla u) \cdot \Lambda^{-2} b \, dx &= -\int_{\mathbb{T}^n} (\tilde{b} \cdot \nabla \partial_t u) \cdot \Lambda^{-2} b \, dx - \int_{\mathbb{T}^n} (\tilde{b} \cdot \nabla u) \cdot \Lambda^{-2} \partial_t b \, dx\notag\\[1ex]
&=: I_4 + I_5.
\end{align}
To estimate $I_4$, we apply \eqref{equation}$_1$ to express it as
\begin{align*}
I_4&=-\int_{\mathbb{T}^n} (\tilde{b} \cdot \nabla (\tilde{b} \cdot \nabla b + \mathbb{P}(b \cdot \nabla b - u \cdot \nabla u)) \cdot \Lambda^{-2} b \, dx\\[1ex]
&=-\int_{\mathbb{T}^n} (\tilde{b} \cdot \nabla) (\tilde{b} \cdot \nabla b) \cdot \Lambda^{-2} b \, dx - \int_{\mathbb{T}^n} (\tilde{b} \cdot \nabla \mathbb{P}(b \cdot \nabla b - u \cdot \nabla u)) \cdot \Lambda^{-2} b \, dx\\[1ex]
&=:I_{41}+I_{42}.
\end{align*}
By integration by parts, \eqref{meanzero2}, and the inequality $\|b\|_{L^2} \le \|\Lambda b\|_{L^2}$, we deduce
\begin{align*}
I_{41}
&= -\int_{\mathbb{T}^n} (\tilde{b} \cdot \nabla)(\tilde{b} \cdot \nabla b) \cdot \Lambda^{-2} b \, dx \\
&= \int_{\mathbb{T}^n} (\tilde{b} \cdot \nabla b) \cdot \Lambda^{-2} (\tilde{b} \cdot \nabla b) \, dx
= \|\Lambda^{-1} (\tilde{b} \cdot \nabla b)\|_{L^2}^2 \\
&\le |\tilde{b}|^2 \, \| \Lambda^{-1} \nabla b \|_{L^2}^2
\le |\tilde{b}|^2 \, \| b \|_{L^2}^2
\le |\tilde{b}|^2 \, \|\Lambda b\|_{L^2}^2.
\end{align*}
Using the boundedness of the Leray projection \( \mathbb{P} \) on \( L^2 \), one has
\begin{align*}
I_{42}
&= - \int_{\mathbb{T}^n} (\tilde{b} \cdot \nabla \mathbb{P}(b \cdot \nabla b - u \cdot \nabla u)) \cdot \Lambda^{-2} b \, dx  \\[1ex]
&=  \int_{\mathbb{T}^n} \mathbb{P}(b \cdot \nabla b - u \cdot \nabla u) \cdot \Lambda^{-2} (\tilde{b}\cdot\nabla  b) \, dx  \\[1ex]
&\leq \lVert \tilde{b} \cdot \nabla \Lambda^{-2} b \rVert_{L^2}\left(\lVert b \cdot \nabla b \rVert_{L^2}+\lVert u \cdot \nabla u \rVert_{L^2}\right)  \\[1ex]
&\leq C(|\tilde{b}|) \lVert b \rVert_{L^2} \left(\lVert \nabla b \rVert_{L^\infty}\lVert b \rVert_{L^2}+\lVert \nabla u \rVert_{L^\infty}\lVert u \rVert_{L^2}  \right)  \\[1ex]
&\leq C(|\tilde{b}|)\left(\|\nabla b\|_{L^\infty}+\|\nabla u\|_{L^\infty}\right) \left(\| b\|_{L^2}^2+\| u\|_{L^2}^2\right).
\end{align*}
Thus,
\begin{align}\label{8.1a}
I_{4}&\leq |\tilde{b}|^2\lVert \Lambda b \rVert_{L^2}^2+ C\left(\left\| \nabla u \right\|_{L^\infty}+\left\| \nabla b \right\|_{L^\infty} \right)  \left( \left\| u \right\|_{L^2}^2 + \left\|  b \right\|_{L^2}^2 \right).
\end{align}
For \(I_{5}\), one can use \eqref{equation}$_2$ and integration by parts to get
\begin{align*}
I_{5} &= -\int_{\mathbb{T}^n} (\tilde{b} \cdot \nabla u) \cdot \Lambda^{-2}\left( \tilde{b} \cdot \nabla u + \Delta b - (u \cdot \nabla b - b \cdot \nabla u) \right)\, dx \\[1ex]
&= -\lVert \Lambda^{-1}(\tilde{b} \cdot \nabla u) \rVert_{L^2}^2 - \int_{\mathbb{T}^n} (\tilde{b} \cdot \nabla u) \cdot \Lambda^{-2} \Delta b\, dx + \int_{\mathbb{T}^n} \Lambda^{-2}(\tilde{b} \cdot \nabla u) \cdot (u \cdot \nabla b - b \cdot \nabla u) \,dx \\[1ex]
&=: I_{51} +I_{52} + I_{53}.
\end{align*}
For $I_{52}$, by H\"{o}lder and Young inequalities, we have
\begin{align*}
|I_{52}| &= \left| \int_{\mathbb{T}^n} (\tilde{b} \cdot \nabla u) \cdot \Lambda^{-2} \Delta b\, dx \right| \\[1ex]
&\leq \lVert \Lambda^{-1}(\tilde{b} \cdot \nabla u) \rVert_{L^2} \lVert \Lambda b \rVert_{L^2} \\[1ex]
&\leq \frac{1}{2} \lVert \Lambda^{-1}(\tilde{b} \cdot \nabla u) \rVert_{L^2}^2 + \frac{1}{2} \lVert \Lambda b \rVert_{L^2}^2.
\end{align*}
For $I_{53}$, we have
\begin{align*}
I_{53}&= \int_{\mathbb{T}^n} \Lambda^{-2} (\tilde{b} \cdot \nabla u) \cdot (u \cdot \nabla b - b \cdot \nabla u) \,dx\\[1ex]
&\leq C \lVert \tilde{b} \cdot \nabla \Lambda^{-2} u\rVert_{L^2}\left(\lVert u \cdot \nabla b \rVert_{L^2}+\lVert b \cdot \nabla u \rVert_{L^2}\right)  \\[1ex]
&\leq C(|\tilde{b}|) \lVert  u\rVert_{L^2} \left(\lVert \nabla b \rVert_{L^\infty}\lVert u \rVert_{L^2} +\lVert \nabla u \rVert_{L^\infty}\lVert b \rVert_{L^2} \right)  \\[1ex]
&\leq C(|\tilde{b}|)\left(\|\nabla b\|_{L^\infty}+\|\nabla u\|_{L^\infty}\right) \left(\| b\|_{L^2}^2+\| u\|_{L^2}^2\right).
\end{align*}
The above estimates yield
\begin{align}\label{8.2a}
I_{5} \leq -\frac{1}{2}\lVert \Lambda^{-1}(\tilde{b} \cdot \nabla u) \rVert_{L^2}^2 +\frac{1}{2}\lVert \Lambda b \rVert_{L^2}^2+ C\left(\lVert \nabla u \rVert_{L^{\infty}}+\lVert \nabla b \rVert_{L^{\infty}}\right)\left(\lVert  u \rVert^2_{L^2} +\lVert b \rVert^2_{L^2}\right).
\end{align}
Finally, substituting \eqref{8.1a} and \eqref{8.2a} into \eqref{J12}, then \eqref{key1b} is obtained. So far, the proof of Proposition \ref{prop3.21} is completed.
\end{proof}

\section{Proof of Theorem \ref{thm}}\label{s:6}
In this section, we establish the nonlinear stability result stated in Theorem \ref{thm}. The analysis is divided into three subsections.

\subsection{A priori estimates}\label{4/1}

To derive the a priori estimates, for the case $\mu=1$, $\nu=0$,  we define the following modified energy functional:
\begin{align}
	Q_s(t) := a \lVert (u, b)(t) \rVert_{H^s}^2
	- \sum_{l = 0}^{s} \int_{\mathbb{T}^n} (\tilde{b} \cdot \nabla b)(t) \cdot \Lambda^{2l-2} u(t) \, dx,
	\quad s\in[0, m],\label{lya}
\end{align}
where \(a > 0\) is a suitably chosen constant. Our focus will be on the evolution of $Q_s(t)$.

We begin by analyzing the first term on the right-hand side of \eqref{lya}.
\begin{prop}\label{prop4.2}
Let $(u,b)$ be a smooth global solution to $\eqref{equation}$ with $\mu =1, \nu = 0$.
Assume $m > 2 + r + \frac{n}{2}$. For any $s \in [0, m]$ and $t \in [0, T]$, it holds that
\begin{align}\label{2.33}
\frac{1}{2}\frac{d}{dt} \lVert( \Lambda^s u, \Lambda^s b)\rVert_{L^2}^2 +\lVert \Lambda^{s + 1} u \rVert_{L^2}^2
\leq C \left( \lVert \nabla u \rVert_{L^{\infty}} + \lVert b \rVert_{H^{m-1-r}}^2 \right) \lVert( \Lambda^s u, \Lambda^s b)\rVert_{L^2}^2.
\end{align}
In particular,
\begin{align}\label{2.33'}
\frac{1}{2}\frac{d}{dt} \lVert (u, b)(t) \rVert_{H^m}^2 +\lVert \nabla u \rVert_{H^m}^2
\leq C \left( \lVert \nabla u \rVert_{L^{\infty}} + \lVert b \rVert_{H^{m-1-r}}^2 \right) \lVert (u, b)(t) \rVert_{H^m}^2.
\end{align}
\end{prop}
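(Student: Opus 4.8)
The plan is to analyse the evolution of $\|(\Lambda^s u,\Lambda^s b)(t)\|_{L^2}^2$ (the first term of the modified energy \eqref{lya}) by running simultaneous $\dot H^s$ estimates on the two equations of \eqref{equation} with $\mu=1,\nu=0$: apply $\Lambda^s$ to $\eqref{equation}_1$ and pair with $\Lambda^s u$ in $L^2$, apply $\Lambda^s$ to $\eqref{equation}_2$ and pair with $\Lambda^s b$, then add. Since $\nabla\cdot u=0$, the pressure contribution drops and $\langle u\cdot\nabla\Lambda^s u,\Lambda^s u\rangle=\langle u\cdot\nabla\Lambda^s b,\Lambda^s b\rangle=0$, so the viscous term $\|\Lambda^{s+1}u\|_{L^2}^2$ appears on the left; since $\tilde b$ is constant, $\tilde b\cdot\nabla$ commutes with $\Lambda^s$ and integrates by parts freely, so the two linear coupling terms cancel: $\langle\Lambda^s(\tilde b\cdot\nabla b),\Lambda^s u\rangle+\langle\Lambda^s(\tilde b\cdot\nabla u),\Lambda^s b\rangle=0$. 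It then remains to bound the four nonlinear contributions $T_1=-\langle[\Lambda^s,u\cdot\nabla]u,\Lambda^s u\rangle$, $T_2=\langle\Lambda^s(b\cdot\nabla b),\Lambda^s u\rangle$, $T_3=-\langle[\Lambda^s,u\cdot\nabla]b,\Lambda^s b\rangle$, $T_4=\langle\Lambda^s(b\cdot\nabla u),\Lambda^s b\rangle$; all four vanish when $s=0$ by the divergence-free structure, so one may take $s>0$ and invoke Lemma \ref{jiaohuanzi}.

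For $T_1$, the commutator estimate (Lemma \ref{jiaohuanzi}, first part) with the $L^\infty$ norm placed on $\nabla u$ gives $\|[\Lambda^s,u\cdot\nabla]u\|_{L^2}\le C\|\nabla u\|_{L^\infty}\|\Lambda^s u\|_{L^2}$, hence $T_1\le C\|\nabla u\|_{L^\infty}\|\Lambda^s u\|_{L^2}^2$. For $T_2$, write $b\cdot\nabla b=\nabla\cdot(b\otimes b)$ (using $\nabla\cdot b=0$), integrate by parts so that the loose derivative hits $\Lambda^s u$, apply the product inequality $\|\Lambda^s(b\otimes b)\|_{L^2}\le C\|b\|_{L^\infty}\|\Lambda^s b\|_{L^2}$ (Lemma \ref{jiaohuanzi}, second part), and use Young's inequality to split off $\|\Lambda^{s+1}u\|_{L^2}$, obtaining $T_2\le\varepsilon\|\Lambda^{s+1}u\|_{L^2}^2+C_\varepsilon\|b\|_{L^\infty}^2\|\Lambda^s b\|_{L^2}^2$. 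For $T_4$, the product inequality applied to $b\cdot\nabla u$ gives $\|\Lambda^s(b\cdot\nabla u)\|_{L^2}\le C(\|b\|_{L^\infty}\|\Lambda^{s+1}u\|_{L^2}+\|\nabla u\|_{L^\infty}\|\Lambda^s b\|_{L^2})$; pairing with $\Lambda^s b$ and applying Young yields $T_4\le\varepsilon\|\Lambda^{s+1}u\|_{L^2}^2+C_\varepsilon(\|b\|_{L^\infty}^2+\|\nabla u\|_{L^\infty})\|\Lambda^s b\|_{L^2}^2$.

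The delicate term — and what I expect to be the main obstacle — is $T_3$, because the $b$-equation carries no dissipation, so the product inequality applied to $u\cdot\nabla b$ would cost a factor $\|\Lambda^{s+1}b\|_{L^2}$ with nothing available to absorb it; one must therefore keep the commutator. Lemma \ref{jiaohuanzi} gives $\|[\Lambda^s,u\cdot\nabla]b\|_{L^2}\le C\|\nabla u\|_{L^\infty}\|\Lambda^s b\|_{L^2}+C\|\nabla b\|_{L^\infty}\|\Lambda^s u\|_{L^2}$, and the second summand contributes a cross term $C\|\nabla b\|_{L^\infty}\|\Lambda^s u\|_{L^2}\|\Lambda^s b\|_{L^2}$ which is not yet of the advertised form. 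I would resolve this by observing that $u(t)$ has zero mean (Lemma \ref{lem:mean}), so all its Fourier modes satisfy $|k|\ge1$ and hence the elementary bound $\|\Lambda^s u\|_{L^2}\le\|\Lambda^{s+1}u\|_{L^2}$ holds; feeding this into Young's inequality turns the cross term into $\varepsilon\|\Lambda^{s+1}u\|_{L^2}^2+C_\varepsilon\|\nabla b\|_{L^\infty}^2\|\Lambda^s b\|_{L^2}^2$, so that $T_3\le\varepsilon\|\Lambda^{s+1}u\|_{L^2}^2+C_\varepsilon(\|\nabla u\|_{L^\infty}+\|\nabla b\|_{L^\infty}^2)\|\Lambda^s b\|_{L^2}^2$. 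The crucial point is that the a priori uncontrolled factor $\|\nabla b\|_{L^\infty}$ is forced to appear \emph{squared}, which is precisely what the viscous dissipation plus the torus Poincar\'e inequality buy (note this is the trivial Poincar\'e on $\mathbb{T}^n$, not the Diophantine Lemma \ref{2.1}).

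To conclude, the hypothesis $m>2+r+\tfrac n2$ gives $m-1-r>\tfrac n2+1$, so the embedding $H^{m-1-r}(\mathbb{T}^n)\hookrightarrow W^{1,\infty}(\mathbb{T}^n)$ yields $\|b\|_{L^\infty}^2+\|\nabla b\|_{L^\infty}^2\le C\|b\|_{H^{m-1-r}}^2$. Summing $T_1+\cdots+T_4$, choosing $\varepsilon$ small enough that the accumulated Young corrections absorb into the viscous term and leave a positive multiple of $\|\Lambda^{s+1}u\|_{L^2}^2$ on the left, and bounding $\|\Lambda^s u\|_{L^2}^2\le\|(\Lambda^s u,\Lambda^s b)\|_{L^2}^2$, we obtain \eqref{2.33}. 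Finally \eqref{2.33'} is the case $s=m$, once one uses that, for zero-mean functions on $\mathbb{T}^n$, $\|(\Lambda^m u,\Lambda^m b)\|_{L^2}^2\simeq\|(u,b)\|_{H^m}^2$ and $\|\Lambda^{m+1}u\|_{L^2}^2\simeq\|\nabla u\|_{H^m}^2$.
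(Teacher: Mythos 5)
Your proof is correct and runs along the same template as the paper's: apply $\Lambda^s$ to both equations, pair, cancel the linear coupling $\tilde b\cdot\nabla$ terms, bound the nonlinear terms via Lemma~\ref{jiaohuanzi}, invoke $\|\nabla b\|_{L^\infty}\lesssim\|b\|_{H^{m-1-r}}$ from $m>2+r+\tfrac{n}{2}$, use the zero-mean Poincar\'e bound $\|\Lambda^s u\|_{L^2}\le\|\Lambda^{s+1}u\|_{L^2}$, and absorb the Young remainders into the viscous term. The one genuine difference is in how you organize the Lorentz-force pair: the paper treats $\mathrm{I}_2+\mathrm{I}_4=\langle\Lambda^s(b\cdot\nabla b),\Lambda^s u\rangle+\langle\Lambda^s(b\cdot\nabla u),\Lambda^s b\rangle$ jointly, subtracting and adding $b\cdot\Lambda^s\nabla b$ and $b\cdot\Lambda^s\nabla u$ so that the two transport pieces combine to $\int b\cdot\nabla(\Lambda^s u\cdot\Lambda^s b)\,dx=0$, leaving only commutators and producing $\|\nabla b\|_{L^\infty}$ factors; you instead treat $T_2$ and $T_4$ separately (integrating by parts in $T_2$, using the raw product estimate in $T_4$), which produces $\|b\|_{L^\infty}$ factors and costs one extra $\|\Lambda^{s+1}u\|_{L^2}$ in each, absorbed by viscosity. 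Both routes close under the same regularity hypothesis since $H^{m-1-r}\hookrightarrow W^{1,\infty}$; your version avoids the joint-commutator trick at the price of invoking the viscous absorption one more time, which is perfectly available here since $\mu>0$.

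One small inaccuracy in a side remark: you assert that ``all four [terms $T_1,\dots,T_4$] vanish when $s=0$ by the divergence-free structure.'' In fact only $T_1$ and $T_3$ vanish individually at $s=0$ (they are literal commutators); $T_2=\langle b\cdot\nabla b,u\rangle$ and $T_4=\langle b\cdot\nabla u,b\rangle$ are separately nonzero, and it is only their sum $T_2+T_4=\int b\cdot\nabla(u\cdot b)\,dx$ that vanishes via $\nabla\cdot b=0$. This does not affect your proof, since your $T_2$ and $T_4$ bounds hold for all $s\ge 0$ anyway (at $s=0$ the product inequality degenerates to the trivial H\"older bound), but the justification as stated is not correct.
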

\begin{proof}
Applying the operator \(\Lambda^s\) to both sides of $\eqref{equation}_1$ and $\eqref{equation}_2$, taking the inner products of the results with \(\Lambda^su\) and \(\Lambda^sb\) respectively, and then adding them together, we obtain
\[
\frac{1}{2} \frac{d}{dt} \left( \lVert \Lambda^s u \rVert_{L^2}^2 + \lVert \Lambda^s b \rVert_{L^2}^2 \right) + \lVert \Lambda^{s + 1} u \rVert_{L^2}^2  = \sum_{j = 1}^{6} \mathrm{I}_j,
\]
where
\begin{align*}
\mathrm{I}_1 &= - \int_{\mathbb{T}^n} \Lambda^s (u \cdot \nabla u) \cdot \Lambda^s u \, dx, & \mathrm{I}_2 &= \int_{\mathbb{T}^n} \Lambda^s (b \cdot \nabla b) \cdot \Lambda^s u \, dx, \\[1ex]
\mathrm{I}_3 &= - \int_{\mathbb{T}^n} \Lambda^s (u \cdot \nabla b) \cdot \Lambda^s b \, dx, & \mathrm{I}_4 &= \int_{\mathbb{T}^n} \Lambda^s (b \cdot \nabla u) \cdot \Lambda^s b \, dx, \\[1ex]
\mathrm{I}_5 &= \int_{\mathbb{T}^n} (\tilde{b} \cdot \nabla \Lambda^s b) \cdot \Lambda^s u \, dx, & \mathrm{I}_6 &= \int_{\mathbb{T}^n} (\tilde{b} \cdot \nabla \Lambda^s u) \cdot \Lambda^s b \, dx.
\end{align*}
By direct computation and integrating by parts, we find
\[\mathrm{I}_5 + \mathrm{I}_6 = \int_{\mathbb{T}^n} (\tilde{b} \cdot \nabla \Lambda^s b) \cdot \Lambda^s u \, dx + \int_{\mathbb{T}^n} (\tilde{b} \cdot \nabla \Lambda^s u) \cdot \Lambda^s b \, dx = 0.
\]
Making use of Lemma \ref{jiaohuanzi} and \eqref{equation}$_3$, it follows that
\begin{align*}
\mathrm{I}_1 = - \int_{\mathbb{T}^n} \Lambda^s (u \cdot \nabla u) \cdot \Lambda^s u \, dx &= - \int_{\mathbb{T}^n} \left[ \Lambda^s (u \cdot \nabla u) - u \cdot \nabla \Lambda^s u \right] \cdot \Lambda^s u \, dx \\[1ex]
&\leq C \lVert \Lambda^s u \rVert_{L^2}^2 \lVert \nabla u \rVert_{L^{\infty}},
\end{align*}
\begin{align*}
\mathrm{I}_3 = - \int_{\mathbb{T}^n} \Lambda^s (u \cdot \nabla b) \cdot \Lambda^s b \, dx &= - \int_{\mathbb{T}^n} \left[ \Lambda^s (u \cdot \nabla b) - u \cdot \nabla \Lambda^s b \right] \cdot \Lambda^s b \, dx \\[1ex]
&\leq C \lVert \Lambda^s b \rVert_{L^2} \left( \lVert \Lambda^s u \rVert_{L^2} \lVert \nabla b \rVert_{L^{\infty}} + \lVert \Lambda^s b \rVert_{L^2} \lVert \nabla u \rVert_{L^{\infty}} \right),
\end{align*}
and
\begin{align*}
\mathrm{I}_2 + \mathrm{I}_4 &= \int_{\mathbb{T}^n} \Lambda^s (b \cdot \nabla b) \cdot \Lambda^s u \, dx + \int_{\mathbb{T}^n} \Lambda^s (b \cdot \nabla u) \cdot \Lambda^s b \, dx \\
&= \int_{\mathbb{T}^n} \left[ \Lambda^s (b \cdot \nabla b) - b \cdot \Lambda^s \nabla b \right] \cdot \Lambda^s u \, dx \\
&\quad + \int_{\mathbb{T}^n} \left[ \Lambda^s (b \cdot \nabla u) - b \cdot \Lambda^s \nabla u \right] \cdot \Lambda^s b \, dx \\[1ex]
&\leq C \lVert \Lambda^s u \rVert_{L^2} \lVert \Lambda^s b \rVert_{L^2} \lVert \nabla b \rVert_{L^{\infty}} \\[1ex]
&\quad + C \lVert \Lambda^s b \rVert_{L^2} \left( \lVert \Lambda^s u \rVert_{L^2} \lVert \nabla b \rVert_{L^{\infty}} + \lVert \Lambda^s b \rVert_{L^2} \lVert \nabla u \rVert_{L^{\infty}} \right).
\end{align*}
Applying the Sobolev embedding
\begin{align}\label{2.55}
\lVert \nabla b \rVert_{L^{\infty}} \leq C \lVert b \rVert_{H^{m-1-r}}, \quad \text{with } m > 2 + r + \frac{n}{2},
\end{align}
and combining the preceding estimates, we obtain
\begin{align*}
&\frac{1}{2} \frac{d}{dt} \left( \lVert \Lambda^s u \rVert_{L^2}^2 + \lVert \Lambda^s b \rVert_{L^2}^2 \right) + \lVert \Lambda^{s + 1} u \rVert_{L^2}^2\\[1ex]
 \leq &C \lVert \nabla u \rVert_{L^{\infty}} \left( \lVert \Lambda^s u \rVert_{L^2}^2 + \lVert \Lambda^s b \rVert_{L^2}^2 \right)+C\lVert \nabla b \rVert_{L^{\infty}}\lVert \Lambda^s u \rVert_{L^2} \lVert \Lambda^s b \rVert_{L^2} \\[1ex]
 \leq &C \lVert \nabla u \rVert_{L^{\infty}} \left( \lVert \Lambda^s u \rVert_{L^2}^2 + \lVert \Lambda^s b \rVert_{L^2}^2 \right)+C\lVert b \rVert_{H^{m-1-r}}\lVert \Lambda^s u \rVert_{L^2} \lVert \Lambda^s b \rVert_{L^2},\quad (m>2+r+\frac{n}{2}) \\[1ex]
 \leq &C \lVert \nabla u \rVert_{L^{\infty}} \left( \lVert \Lambda^s u \rVert_{L^2}^2 + \lVert \Lambda^s b \rVert_{L^2}^2 \right)+C\lVert b \rVert_{H^{m-1-r}}\lVert \Lambda^{s+1} u \rVert_{L^2} \lVert \Lambda^s b \rVert_{L^2},
\end{align*}
where, in the last step, we used \eqref{meanzero2}, Plancherel's theorem  together with the Fourier multiplier property \(|k|^s \leq |k|^{s + 1}\) for all \(|k| \geq 1\), which implies \(\lVert \Lambda^s u \rVert_{L^2} \leq \lVert \Lambda^{s+1} u \rVert_{L^2}\).

Finally, applying the Young inequality with an arbitrary constant \(\varepsilon > 0\), we conclude
\begin{align*}
&\frac{1}{2} \frac{d}{dt} \left( \lVert \Lambda^s u \rVert_{L^2}^2 + \lVert \Lambda^s b \rVert_{L^2}^2 \right) + \lVert \Lambda^{s + 1} u \rVert_{L^2}^2\\[1ex]
 \leq &C \lVert \nabla u \rVert_{L^{\infty}} \left( \lVert \Lambda^s u \rVert_{L^2}^2 + \lVert \Lambda^s b \rVert_{L^2}^2 \right)+\varepsilon\lVert \Lambda^{s+1} u \rVert^2_{L^2}+C_\varepsilon\lVert b \rVert^2_{H^{m-1-r}} \lVert \Lambda^s b \rVert^2_{L^2}.
\end{align*}
Absorbing the \( \varepsilon \)-term into the left-hand side yields the desired estimate. This completes the proof of Proposition \ref{prop4.2}.
\end{proof}

The norm $\lVert b \rVert_{H^{m-1-r}}$, which appears in Proposition \ref{prop4.2}, plays a key role in controlling the nonlinear terms.
To exploit how dissipation acts on \( b \), we now turn to the second term on the right-hand side of \eqref{lya}.
\begin{prop}\label{prop4.3}
Let $m > 2 + r + \frac{n}{2}$, and let $(u,b)$ be a smooth global solution to $\eqref{equation}$ with $\mu = 1, \nu = 0$.
Then, for any $s \in [0, m]$ and $t \in [0, T]$, there is
\begin{align}\label{2.333}
-\frac{d}{dt} \sum_{l = 0}^{s}\int_{\mathbb{T}^n} (\tilde{b} \cdot \nabla b) \cdot \Lambda^{2l-2} u \, dx
\leq &\left( 1+ |\tilde{b}|^2 \right) \left\lVert \nabla u \right\rVert_{H^s}^2
-\frac{1}{2} \left\lVert \Lambda^{-1} (\tilde{b} \cdot \nabla b) \right\rVert_{H^s}^2 \notag\\[1ex]
&+ C \left\lVert (u, b) \right\rVert_{H^s}^2 \left( \left\lVert \nabla u \right\rVert_{L^{\infty}} + \lVert b \rVert^2_{H^{m-1-r}} \right),
\end{align}
where $C > 0$ depends only on the fixed parameters.
\end{prop}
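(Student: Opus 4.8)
The plan is to differentiate the cross functional in time, insert the evolution equations \eqref{equation} with $\mu=1$, $\nu=0$, and then organize the terms in exact analogy with the proof of Proposition~\ref{prop3.21}, the only differences being the extra Fourier weight $\Lambda^{2l-2}$ and the summation over $0\le l\le s$. Writing $\partial_t b=\tilde{b}\cdot\nabla u-N_2$ and $\partial_t u=\Delta u+\tilde{b}\cdot\nabla b-N_1$ with $N_1=\mathbb{P}(u\cdot\nabla u-b\cdot\nabla b)$ and $N_2=u\cdot\nabla b-b\cdot\nabla u$, the product rule gives
\begin{align*}
-\frac{d}{dt}\sum_{l=0}^{s}\int_{\mathbb{T}^n}(\tilde{b}\cdot\nabla b)\cdot\Lambda^{2l-2}u\,dx
&=-\sum_{l=0}^{s}\int_{\mathbb{T}^n}(\tilde{b}\cdot\nabla\partial_t b)\cdot\Lambda^{2l-2}u\,dx\\
&\quad-\sum_{l=0}^{s}\int_{\mathbb{T}^n}(\tilde{b}\cdot\nabla b)\cdot\Lambda^{2l-2}\partial_t u\,dx=:J_1+J_2,
\end{align*}
and into each of $J_1,J_2$ one substitutes the corresponding equation.

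The \emph{linear} part is extracted first. In $J_2$ the $\tilde{b}\cdot\nabla b$-term equals $-\sum_{l}\|\Lambda^{l-1}(\tilde{b}\cdot\nabla b)\|_{L^2}^2$, which by Plancherel and $\widehat{b}(0)=0$ is comparable to $-\|\Lambda^{-1}(\tilde{b}\cdot\nabla b)\|_{H^s}^2$; this is the dissipative gain. The $\Delta u$-term, after using $\Lambda^{2l-2}\Delta=-\Lambda^{2l}$ and moving powers of $\Lambda$ by self-adjointness, becomes $\sum_{l}\int\Lambda^{l-1}(\tilde{b}\cdot\nabla b)\cdot\Lambda^{l+1}u\,dx\le\tfrac12\|\Lambda^{-1}(\tilde{b}\cdot\nabla b)\|_{H^s}^2+C\|\nabla u\|_{H^s}^2$ by Cauchy--Schwarz and Young, the first summand cancelling half the gain and leaving $-\tfrac12\|\Lambda^{-1}(\tilde{b}\cdot\nabla b)\|_{H^s}^2$. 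In $J_1$ the term $\tilde{b}\cdot\nabla(\tilde{b}\cdot\nabla u)$, after integrating by parts the antisymmetric operator $\tilde{b}\cdot\nabla$, produces $\sum_{l}\|\Lambda^{l-1}(\tilde{b}\cdot\nabla u)\|_{L^2}^2\le|\tilde{b}|^2\sum_{l}\|\Lambda^{l}u\|_{L^2}^2\le|\tilde{b}|^2\|\nabla u\|_{H^s}^2$, where the last step uses the zero-mean Poincaré inequality $\|u\|_{H^s}\le\|\nabla u\|_{H^s}$. These two contributions yield precisely the $(1+|\tilde{b}|^2)\|\nabla u\|_{H^s}^2$ appearing on the right of \eqref{2.333}.

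It remains to treat the \emph{nonlinear} terms $\sum_{l}\int(\tilde{b}\cdot\nabla b)\cdot\Lambda^{2l-2}N_1\,dx$ coming from $J_2$ and $-\sum_{l}\int N_2\cdot\Lambda^{2l-2}(\tilde{b}\cdot\nabla u)\,dx$ coming (after an integration by parts) from $J_1$. Redistributing $\Lambda$'s symmetrically, each is a pairing of $\Lambda^{l-1}N_j$ against $\Lambda^{l-1}(\tilde{b}\cdot\nabla b)$ or $\Lambda^{l-1}(\tilde{b}\cdot\nabla u)$, to which Cauchy--Schwarz applies; the negative powers at $l=0$ are harmless since $\widehat{u}(0)=\widehat{b}(0)=0$. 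For $\|\Lambda^{l-1}N_j\|_{L^2}\le\|N_j\|_{H^{s-1}}$ we write $u\cdot\nabla u=\nabla\cdot(u\otimes u)$, $b\cdot\nabla b=\nabla\cdot(b\otimes b)$, etc., apply the calculus inequality of Lemma~\ref{jiaohuanzi} to reduce to $\|(u,b)\|_{L^\infty}\|(u,b)\|_{H^s}$, and bound $\|u\|_{L^\infty}\le C\|\nabla u\|_{L^\infty}$ (the torus Poincaré inequality for zero-mean functions) and $\|b\|_{L^\infty}+\|\nabla b\|_{L^\infty}\le C\|b\|_{H^{m-1-r}}$ (Sobolev embedding, which needs exactly $m>2+r+\tfrac n2$). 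The pieces carrying a velocity-Lipschitz factor are estimated directly and contribute $C\|\nabla u\|_{L^\infty}\|(u,b)\|_{H^s}^2$; the pieces carrying a magnetic $L^\infty$-factor are instead handled by one further Young inequality against the dissipative reservoirs $\|\Lambda^{-1}(\tilde{b}\cdot\nabla b)\|_{H^s}^2$ and $\|\nabla u\|_{H^s}^2$ (again using $\|u\|_{H^s}\le\|\nabla u\|_{H^s}$), which absorbs an arbitrarily small multiple of the dissipation and leaves $C\|b\|_{H^{m-1-r}}^2\|(u,b)\|_{H^s}^2$. Summing over $l$ and collecting all contributions gives \eqref{2.333}.

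The main difficulty is not any single estimate but the bookkeeping: every nonlinear piece must be matched to the appropriate reservoir so that the residual carries only the admissible weights --- a single $\|\nabla u\|_{L^\infty}$ for the velocity-quadratic terms and $\|b\|_{H^{m-1-r}}^2$ (necessarily squared, since it enters through a Young step) for the magnetic ones --- while no more than $m$ derivatives are ever allowed to fall on $(u,b)$. Keeping each $N_j$ one derivative below the top order (in $H^{s-1}$ rather than $H^s$) and exploiting $s\le m$ together with $m>2+r+\tfrac n2$ is exactly what makes all the product and embedding estimates close; this is where the regularity threshold of the theorem is consumed.
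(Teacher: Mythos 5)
Your proposal follows essentially the same route as the paper: substitute the equations into the time derivative of the cross functional, extract the dissipative reservoir $-\sum_l\|\Lambda^{l-1}(\tilde{b}\cdot\nabla b)\|_{L^2}^2$ from the $\tilde{b}\cdot\nabla b$ term in $\partial_t u$, pay the cost of the $\Delta u$ and $\tilde{b}\cdot\nabla(\tilde{b}\cdot\nabla u)$ terms via Cauchy--Schwarz/Young, and bound the cubic terms by pairing against the reservoirs using the Kato--Ponce inequality (Lemma~\ref{jiaohuanzi}) together with the Sobolev embedding $\|b\|_{L^\infty}+\|\nabla b\|_{L^\infty}\lesssim\|b\|_{H^{m-1-r}}$ (which is where $m>2+r+\tfrac n2$ enters) and the zero-mean Poincar\'e bound $\|u\|_{L^\infty}\lesssim\|\nabla u\|_{L^\infty}$. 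One small slip in the bookkeeping: if the Young step on the $\Delta u$ term already consumes exactly half of $\|\Lambda^{-1}(\tilde{b}\cdot\nabla b)\|_{H^s}^2$, then the additional small fraction absorbed by the nonlinear Young pushes the residual strictly above $-\tfrac12$; as in the paper you should split the $\Delta u$-Young less greedily (say a $\tfrac13$ share) so that the nonlinear absorption still leaves the full $-\tfrac12\|\Lambda^{-1}(\tilde{b}\cdot\nabla b)\|_{H^s}^2$, and likewise ensure the sum of the Young weights landing on $\|\nabla u\|_{H^s}^2$ does not exceed $1$. Also note that for $l=0$ the commutator lemma cannot be applied directly to a negative-order target; the paper handles this endpoint by writing the nonlinearity in divergence form and invoking $L^2$-boundedness of the Riesz transforms, which is consistent with (and makes precise) your remark that the negative powers at $l=0$ are harmless because the relevant means vanish.
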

\begin{proof}
By straightforward calculations, we have
\begin{align*}
-\frac{d}{dt} \int_{\mathbb{T}^n} (\tilde{b} \cdot \nabla b) \cdot \Lambda^{2l-2} u \, dx &= -\int_{\mathbb{T}^n} (\tilde{b} \cdot \nabla \partial_t b) \cdot \Lambda^{2l-2} u \, dx - \int_{\mathbb{T}^n} (\tilde{b} \cdot \nabla b) \cdot \Lambda^{2l-2} \partial_t u \, dx\\[1ex]
&=: \mathrm{II}_1 + \mathrm{II}_2.
\end{align*}
To proceed, we first focus on estimating \( \mathrm{II}_1 \). Due to its complexity, \( \mathrm{II}_1 \) is further divided into two parts, \( \mathrm{II}_{11} \) and \( \mathrm{II}_{12} \). After completing these estimates, we then turn our attention to \( \mathrm{II}_2 \).

\smallskip
\noindent
\textbf{Estimate of \(\mathrm{II}_1\):} Using $\eqref{equation}_2$, we rewrite it as
\begin{align*}
\mathrm{II}_1&=-\int_{\mathbb{T}^n} (\tilde{b} \cdot \nabla (\tilde{b} \cdot \nabla u + b \cdot \nabla u - u \cdot \nabla b)) \cdot \Lambda^{2l-2} u \, dx\\[1ex]
&=-\int_{\mathbb{T}^n} (\tilde{b} \cdot \nabla) (\tilde{b} \cdot \nabla u) \cdot \Lambda^{2l-2} u \, dx - \int_{\mathbb{T}^n} \tilde{b} \cdot \nabla (b \cdot \nabla u - u \cdot \nabla b) \cdot \Lambda^{2l-2} u \, dx\\[1ex]
&=:\mathrm{II}_{11}+\mathrm{II}_{12}.
\end{align*}
For $\mathrm{II}_{11}$, by integration by parts and using the commutation property
\[
(\tilde{b} \cdot \nabla)\Lambda^{2l-2} = \Lambda^{2l-2}(\tilde{b} \cdot \nabla) \quad \text{for all } l \in \mathbb{R},
\]
which holds since $\tilde{b}$ is a constant vector field, together with Plancherel's theorem, we obtain
\begin{align*}
\mathrm{II}_{11} &= -\int_{\mathbb{T}^n} (\tilde{b} \cdot \nabla) (\tilde{b} \cdot \nabla u) \cdot \Lambda^{2l-2} u \, dx = \int_{\mathbb{T}^n} (\tilde{b} \cdot \nabla u) \cdot (\tilde{b} \cdot \nabla \Lambda^{2l-2} u) \, dx\\[1ex]
&= \int_{\mathbb{T}^n} (\tilde{b} \cdot \nabla u) \cdot \Lambda^{2l-2}(\tilde{b} \cdot \nabla u) \, dx = \lVert \Lambda^{l-1}(\tilde{b} \cdot \nabla u) \rVert_{L^2}^2\\[1ex]
& \leq |\tilde{b}|^2 \lVert \Lambda^{l} u \rVert_{L^2}^2 \leq |\tilde{b}|^2 \lVert \Lambda^{l+1} u \rVert_{L^2}^2.
\end{align*}
To estimate \( \mathrm{II}_{12} \), we distinguish the case \( l \geq 1 \) and the case \( l = 0 \).

\smallskip
\noindent
{\textbf{Case $l \geq 1$.}} We employ integration by parts, H\"{o}lder inequality, Lemma \ref{jiaohuanzi}, the Sobolev embedding \eqref{2.55} and the Young inequality to control high-order derivatives.
\begin{align*}
\mathrm{II}_{12}
&= - \int_{\mathbb{T}^n} (\tilde{b} \cdot \nabla)(b \cdot \nabla u - u \cdot \nabla b) \cdot \Lambda^{2l-2} u \, dx \\[1ex]
&= - \int_{\mathbb{T}^n} (\tilde{b} \cdot \nabla)(b \cdot \nabla u - u \cdot \nabla b) \cdot \Lambda^{l-2} (\Lambda^{l} u) \, dx \\[1ex]
&\leq C \| \tilde{b} \cdot \nabla \Lambda^{l-2}(b \cdot \nabla u - u \cdot \nabla b) \|_{L^2} \| \Lambda^{l} u \|_{L^2} \\[1ex]
&\leq C(|\tilde{b}|) \big( \| \Lambda^{l-1} b \|_{L^2} \| \nabla u \|_{L^\infty} + \| b \|_{L^\infty} \| \Lambda^{l} u \|_{L^2} \big) \| \Lambda^{l} u \|_{L^2} \\[1ex]
&\quad + C(|\tilde{b}|) \big( \| \Lambda^{l-1} u \|_{L^2} \| \nabla b \|_{L^\infty} + \| u \|_{L^\infty} \| \Lambda^{l} b \|_{L^2} \big) \| \Lambda^{l} u \|_{L^2} \\[1ex]
&\leq C(|\tilde{b}|) \| \nabla u \|_{L^\infty} \big( \| \Lambda^{l} u \|_{L^2}^2 + \| \Lambda^{l} b \|_{L^2}^2 \big)
+ \frac{1}{4} \| \Lambda^{l+1} u \|_{L^2}^2 + C \| b \|_{H^{m - 1 - r}}^2 \| \Lambda^{l} u \|_{L^2}^2.
\end{align*}

\smallskip
\noindent
{\textbf{Case $l = 0$.}} In this case, the term $\Lambda^{-2}u$ involves a negative-order Sobolev norm, making Lemma \ref{jiaohuanzi} inapplicable. To circumvent the associated singularity, we instead exploit the $L^2$-boundedness of the Riesz transforms $R_i = \partial_i \Lambda^{-1}$ together with direct $L^2$ estimates.
\begin{align*}
\mathrm{II}_{12}
&= - \int_{\mathbb{T}^n} (\tilde{b} \cdot \nabla)(b \cdot \nabla u-u \cdot \nabla b) \cdot \Lambda^{-2} u \, dx \\[1ex]
&= - \int_{\mathbb{T}^n} \tilde{b}_i \partial_i \Lambda^{-1}(b_j \partial_j u_k-u_j \partial_j b_k) \cdot \Lambda^{-1} u_k \, dx \\[1ex]
&= \int_{\mathbb{T}^n} \tilde{b}_i R_i(b_j \partial_j u_k-u_j \partial_j b_k) \cdot \Lambda^{-1} u_k \, dx \\[1ex]
&\leq C \| \tilde{b} \|_{L^\infty}\left(\| R_i(b_j \partial_j u_k) \|_{L^2}+\| R_i (u_j \partial_j b_k) \|_{L^2}\right) \| \Lambda^{-1} u_k \|_{L^2} \\[1ex]
&\leq C(|\tilde{b}|) \| b \cdot \nabla u \|_{L^2} \| \Lambda^{-1} u \|_{L^2}+C(|\tilde{b}|)\|u \cdot \nabla b\|_{L^2}\| \Lambda^{-1} u\|_{L^2}\\[1ex]
&\leq C(|\tilde{b}|) \| \nabla u \|_{L^\infty} \| b \|_{L^2} \| u \|_{L^2}+C(|\tilde{b}|)\|\nabla b\|_{L^\infty}\| u\|_{L^2}\|\nabla u\|_{L^2} \\[1ex]
&\leq C(|\tilde{b}|) \left\| \nabla u \right\|_{L^\infty} \left( \left\| u \right\|_{L^2}^2 + \left\| b \right\|_{L^2}^2 \right)+ \frac{1}{4}\left\| \nabla u \right\|_{L^2}^2 +  C\left\| b \right\|_{H^{m - 1 - r}}^2 \left\| u \right\|_{L^2}^2.
\end{align*}
Thus, for any $l\geq0$
\begin{align}\label{8.1}
|\mathrm{II}_1|&\leq \left(|\tilde{b}|^2+\frac{1}{4}\right) \lVert \Lambda^{l+1} u \rVert_{L^2}^2+ C\left(\left\| \nabla u \right\|_{L^\infty}+\left\| b \right\|_{H^{m - 1 - r}}^2 \right)  \left(\| \Lambda^{l} u \|_{L^2}^2 +\| \Lambda^{l} b \|_{L^2}^2 \right).
\end{align}

We now turn  our attention to \(\mathrm{II}_2\).

\smallskip
\noindent
\textbf{Estimate of \(\mathrm{II}_2\):} Similarly, by using the first equation in \eqref{1.41} and integration by parts, we get
\begin{align*}
\mathrm{II}_2 &= -\int_{\mathbb{T}^n} (\tilde{b} \cdot \nabla b) \cdot \Lambda^{2l-2}\left( \tilde{b} \cdot \nabla b + \Delta u - \mathbb{P}(u \cdot \nabla u - b \cdot \nabla b) \right) \,dx \\[1ex]
&= -\lVert \Lambda^{l-1}(\tilde{b} \cdot \nabla b) \rVert_{L^2}^2 - \int_{\mathbb{T}^n} (\tilde{b} \cdot \nabla b) \cdot \Lambda^{2l-2} \Delta u \,dx + \int_{\mathbb{T}^n} (\tilde{b} \cdot \nabla b) \cdot \Lambda^{2l-2} \mathbb{P}(u \cdot \nabla u - b \cdot \nabla b) \,dx \\[1ex]
&=: \mathrm{II}_{21} + \mathrm{II}_{22} + \mathrm{II}_{23}.
\end{align*}
For $\mathrm{II}_{22}$, by H\"{o}lder and Young inequalities with a parameter $\varepsilon>0$, we obtain
\begin{align*}
|\mathrm{II}_{22}|
&= \left| \int_{\mathbb{T}^n} (\tilde{b} \cdot \nabla b) \cdot \Lambda^{2l-2} \Delta u\, dx \right| \\[1ex]
&\leq \lVert \Lambda^{l-1}(\tilde{b} \cdot \nabla b) \rVert_{L^2} \,
      \lVert \Lambda^{l+1} u \rVert_{L^2} \\[1ex]
&\leq \varepsilon \lVert \Lambda^{l+1} u \rVert_{L^2}^2
   + \frac{1}{4\varepsilon} \lVert \Lambda^{l-1}(\tilde{b} \cdot \nabla b) \rVert_{L^2}^2.
\end{align*}
Choosing $\varepsilon =\frac{3}{4}$ yields
\begin{align*}
|\mathrm{II}_{22}|
&\leq \frac{3}{4}\lVert \Lambda^{l+1} u \rVert_{L^2}^2
   + \frac{1}{3} \lVert \Lambda^{l-1}(\tilde{b} \cdot \nabla b) \rVert_{L^2}^2.
\end{align*}

For $\mathrm{II}_{23}$, similar to $\mathrm{II}_{12}$, we distinguish between the cases $l \geq 1$  and $l = 0$.

\smallskip
\noindent
{\textbf{Case $l \geq 1$.}}
Using the boundedness of the Leray projection \( \mathbb{P} \) on \( L^2 \), Lemma \ref{jiaohuanzi}, the Sobolev embedding \eqref{2.55}, Poincar\'e and Young inequalities, we obtain
\begin{align*}
|\mathrm{II}_{23}| &\leq \lVert \Lambda^{l-1}(\tilde{b} \cdot \nabla b) \rVert_{L^2} \lVert \Lambda^{l-1}(u \cdot \nabla u - b \cdot \nabla b) \rVert_{L^2} \\[1ex]
&\leq C \lVert \Lambda^{l-1}(\tilde{b} \cdot \nabla b) \rVert_{L^2} \left( \lVert \Lambda^{l} u \rVert_{L^2} \lVert \nabla u \rVert_{L^\infty} + \lVert \Lambda^{l} b \rVert_{L^2} \lVert b \rVert_{H^{m - 1 - r}} \right)\\[1ex]
&\leq C(|\tilde{b}|)\lVert \Lambda^{l} b \rVert_{L^2} \lVert \Lambda^{l} u \rVert_{L^2}\lVert \nabla u \rVert_{L^{\infty}}+C\lVert \Lambda^{l-1} (\tilde{b} \cdot \nabla b) \rVert_{L^2}\lVert \Lambda^{l} b \rVert_{L^2}\lVert b \rVert_{H^{m-1-r}}\\[1ex]
&\leq C(|\tilde{b}|)\lVert \nabla u \rVert_{L^{\infty}}\left(\lVert \Lambda^{l} u \rVert^2_{L^2} +\lVert \Lambda^{l} b \rVert^2_{L^2}\right) +\frac{1}{6}  \lVert \Lambda^{l-1} (\tilde{b} \cdot \nabla b) \rVert_{L^2}^2 +  C\left\| b \right\|_{H^{m - 1 - r}}^2 \|\Lambda^{l} b \|_{L^2}^2.
\end{align*}

\smallskip
\noindent
{\textbf{Case $l =0$.}}
In this case, we proceed via direct \( L^2 \) estimates
\begin{align*}
\mathrm{II}_{23}&= \int_{\mathbb{T}^n} (\tilde{b} \cdot \nabla b) \cdot \Lambda^{-2} \mathbb{P}(u \cdot \nabla u - b \cdot \nabla b)\, dx\\[1ex]
&\leq \lVert \Lambda^{-2}(\tilde{b} \cdot \nabla b) \rVert_{L^2}\left(\lVert u \cdot \nabla u \rVert_{L^2}+\lVert b \cdot \nabla b \rVert_{L^2}\right)  \\[1ex]
&\leq C \lVert \Lambda^{-1}(\tilde{b} \cdot \nabla b) \rVert_{L^2} \left(\lVert \nabla u \rVert_{L^\infty}\lVert u \rVert_{L^2}+\lVert \nabla b \rVert_{L^\infty}\lVert b \rVert_{L^2}  \right)  \\[1ex]
&\leq C(|\tilde{b}|) \lVert b \rVert_{L^2} \lVert u \rVert_{L^2}\lVert  \nabla u \rVert_{L^\infty}+C \lVert \Lambda^{-1}(\tilde{b} \cdot \nabla b)\rVert_{L^2} \|b\|_{H^{m-1-r}} \lVert b \rVert_{L^2}\\[1ex]
&\leq C(|\tilde{b}|)\lVert \nabla u \rVert_{L^{\infty}}\left(\lVert  u \rVert^2_{L^2} +\lVert b \rVert^2_{L^2}\right) +\frac{1}{6}\lVert  \Lambda^{-1} (\tilde{b} \cdot \nabla b) \rVert_{L^2}^2 +  C\left\| b \right\|_{H^{m - 1 - r}}^2 \left\| b  \right\|_{L^2}^2.
\end{align*}
Combining the estimates for $\mathrm{II}_{21}, \mathrm{II}_{22}$ and $\mathrm{II}_{23}$, we conclude that for any $l\geq0$,
\begin{align}\label{8.2}
|\mathrm{II}_2|&\leq -\frac{1}{2} \lVert \Lambda^{l-1}(\tilde{b} \cdot \nabla b) \rVert_{L^2}^2 +\frac{3}{4} \lVert \Lambda^{l+1} u \rVert_{L^2}^2\notag\\[1ex]
&\quad+ C\left(\lVert \nabla u \rVert_{L^{\infty}}+\left\| b \right\|_{H^{m - 1 - r}}^2\right)\left(\lVert \Lambda^{l} u \rVert^2_{L^2} +\lVert \Lambda^{l} b \rVert^2_{L^2}\right).
\end{align}
Thus, combining \eqref{8.1} and \eqref{8.2}, and using the assumption \( m > 2 + r + \frac{n}{2} \), we obtain
\begin{align*}
- \frac{d}{dt} \sum_{l = 0}^{s} \int_{\mathbb{T}^n} (\tilde{b} \cdot \nabla b) \cdot \Lambda^{2l-2} u \, dx
\leq{}& \left( 1+ |\tilde{b}|^2 \right) \left\lVert \nabla u \right\rVert_{H^s}^2
- \frac{1}{2} \left\lVert \Lambda^{-1} (\tilde{b} \cdot \nabla b) \right\rVert_{H^s}^2 \\[1ex]
& + C \left\lVert (u, b) \right\rVert_{H^s}^2 \left( \left\lVert \nabla u \right\rVert_{L^{\infty}} + \left\lVert b \right\rVert_{H^{m-1-r}}^2 \right).
\end{align*}
This completes the proof of Proposition \ref{prop4.3}.
\end{proof}

With the help of Propositions \ref{prop4.2} and \ref{prop4.3}, we proceed to derive the energy estimates.
Define the energy functional
\begin{align*}
E_m^2(T) :=  \sup_{t \in [0, T]} \lVert (u, b)(t) \rVert_{H^m}^2
+ \int_0^T \lVert \nabla u(t) \rVert_{H^m}^2 \, dt
+ \int_0^T \lVert \Lambda^{-1}(\tilde{b} \cdot \nabla b)(t) \rVert_{H^m}^2 \, dt.
\end{align*}
Then, the following proposition holds.
\begin{prop}\label{prop3.211a}
Let \( n \in \mathbb{N} \) with \( n \geq 2 \), \(r>n-1\), and \( m \in \mathbb{N} \) satisfying \( m > 2 + r + \frac{n}{2} \).
Assume that \( (u, b) \) is a smooth global solution to \eqref{equation} with $\mu=1$, $\nu=0$.
Then there exists a constant \( C > 0 \) such that
\begin{align}\label{12.24}
E_m^2(T) \leq C(|\tilde{b}|) \lVert (u_0, b_0) \rVert_{H^m}^2
+ C E_m^4(T)
+ C E_m^2(T) \int_0^T \lVert \nabla u(t) \rVert_{L^{\infty}} \, dt,
\end{align}
for all \( T > 0 \).
\end{prop}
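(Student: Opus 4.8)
The plan is to run one energy estimate at the top regularity level $s=m$, combining the two ingredients already prepared: Proposition~\ref{prop4.2} controls the plain $H^m$ energy together with the viscous dissipation $\lVert\nabla u\rVert_{H^m}^2$, while Proposition~\ref{prop4.3} with $s=m$ extracts, out of the coupling term $\tilde b\cdot\nabla b$, the additional dissipation $\lVert\Lambda^{-1}(\tilde b\cdot\nabla b)\rVert_{H^m}^2$ at the price of reintroducing a controlled multiple of $\lVert\nabla u\rVert_{H^m}^2$. First I would fix $a=a(|\tilde b|)$ large enough that (i) the cross term in \eqref{lya} obeys
\[
\Bigl|\sum_{l=0}^{m}\int_{\mathbb T^n}(\tilde b\cdot\nabla b)\cdot\Lambda^{2l-2}u\,dx\Bigr|\le\frac{a}{2}\,\lVert(u,b)\rVert_{H^m}^2,
\]
so that $\tfrac{a}{2}\lVert(u,b)\rVert_{H^m}^2\le Q_m(t)\le\tfrac{3a}{2}\lVert(u,b)\rVert_{H^m}^2$, and (ii) $2a-(1+|\tilde b|^2)\ge1$. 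For (i) one writes each summand with $l\ge1$ as $\int\Lambda^{l-1}(\tilde b\cdot\nabla b)\cdot\Lambda^{l-1}u$, bounds it by $|\tilde b|\,\lVert\Lambda^{l}b\rVert_{L^2}\lVert\Lambda^{l-1}u\rVert_{L^2}$ via Plancherel and $|\tilde b\cdot k|\le|\tilde b|\,|k|$, and handles the $l=0$ term by $|\tilde b|\,\lVert b\rVert_{L^2}\lVert\Lambda^{-1}u\rVert_{L^2}\le|\tilde b|\,\lVert b\rVert_{L^2}\lVert u\rVert_{L^2}$ using the Poincar\'e inequality; summing over $0\le l\le m$ gives the claim.

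Next, I would multiply \eqref{2.33'} by $2a$, add \eqref{2.333} with $s=m$, and use $\frac{d}{dt}Q_m=a\frac{d}{dt}\lVert(u,b)\rVert_{H^m}^2-\frac{d}{dt}\sum_{l}\int(\tilde b\cdot\nabla b)\cdot\Lambda^{2l-2}u\,dx$ to obtain, after absorbing $(1+|\tilde b|^2)\lVert\nabla u\rVert_{H^m}^2$ into $2a\lVert\nabla u\rVert_{H^m}^2$,
\[
\frac{d}{dt}Q_m(t)+\lVert\nabla u\rVert_{H^m}^2+\tfrac12\lVert\Lambda^{-1}(\tilde b\cdot\nabla b)\rVert_{H^m}^2\le C(|\tilde b|)\bigl(\lVert\nabla u\rVert_{L^\infty}+\lVert b\rVert_{H^{m-1-r}}^2\bigr)\lVert(u,b)\rVert_{H^m}^2 .
\]
Integrating on $[0,t]$, using $Q_m(0)\le C(|\tilde b|)\lVert(u_0,b_0)\rVert_{H^m}^2$ (same cross-term bound at $t=0$) and $Q_m\ge0$, then taking $\sup_{t\in[0,T]}$ and invoking the equivalence from (i), yields
\[
E_m^2(T)\le C(|\tilde b|)\lVert(u_0,b_0)\rVert_{H^m}^2+C(|\tilde b|)\int_0^T\bigl(\lVert\nabla u(t)\rVert_{L^\infty}+\lVert b(t)\rVert_{H^{m-1-r}}^2\bigr)\lVert(u,b)(t)\rVert_{H^m}^2\,dt .
\]

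Finally, I would split the nonlinear integral. The $\lVert\nabla u\rVert_{L^\infty}$ part is immediate: bound $\lVert(u,b)(t)\rVert_{H^m}^2\le E_m^2(T)$ and factor it out, producing $CE_m^2(T)\int_0^T\lVert\nabla u\rVert_{L^\infty}\,dt$, which is the last term of \eqref{12.24}. For the $\lVert b\rVert_{H^{m-1-r}}^2$ part I would invoke the Diophantine Poincar\'e inequality Lemma~\ref{2.1} with index $m-1-r$ applied to $b$, which has zero mean by Lemma~\ref{lem:mean}, giving $\lVert b\rVert_{H^{m-1-r}}\le c\lVert\tilde b\cdot\nabla b\rVert_{H^{m-1}}$; since $\tilde b\cdot\nabla b$ is zero-mean, a direct frequency comparison (using $|k|\ge1$) gives $\lVert\tilde b\cdot\nabla b\rVert_{H^{m-1}}\le C\lVert\Lambda^{-1}(\tilde b\cdot\nabla b)\rVert_{H^m}$. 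Hence
\[
\int_0^T\lVert b\rVert_{H^{m-1-r}}^2\lVert(u,b)\rVert_{H^m}^2\,dt\le C\,E_m^2(T)\int_0^T\lVert\Lambda^{-1}(\tilde b\cdot\nabla b)\rVert_{H^m}^2\,dt\le C\,E_m^4(T),
\]
because the last time integral is precisely one of the three terms defining $E_m^2(T)$. Collecting the three contributions gives \eqref{12.24}.

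The main obstacle is exactly this last maneuver. Without magnetic diffusion there is no a priori control of $b$ in any positive Sobolev norm, so the factor $\lVert b\rVert_{H^{m-1-r}}^2$ produced by the transport nonlinearities must be routed entirely through the single dissipative quantity available, the multiplier $\Lambda^{-1}(\tilde b\cdot\nabla b)$ generated by the background field; the Diophantine condition is what licenses this routing, trading the directional derivative $\tilde b\cdot\nabla$ for a genuine one-derivative gain on $b$ at the cost of an $r$-derivative loss, which is the source of the index $m-1-r$ and, after the forthcoming bootstrap on $\int_0^T\lVert\nabla u\rVert_{L^\infty}\,dt$, of the threshold $m>3+2r+\tfrac n2$ in Theorem~\ref{thm}. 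A subsidiary point, already dealt with inside Proposition~\ref{prop4.3}, is the $l=0$ term of the cross sum, where the negative-order operator $\Lambda^{-2}$ forbids the commutator estimate and one must fall back on $L^2$ bounds and the boundedness of the Riesz transforms.
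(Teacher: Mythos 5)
Your proposal is correct and follows essentially the same route as the paper: combine Proposition~\ref{prop4.2} (scaled by a suitable $a$-dependent factor) with Proposition~\ref{prop4.3} at $s=m$, absorb the $(1+|\tilde b|^2)\lVert\nabla u\rVert_{H^m}^2$ term, convert $\lVert b\rVert_{H^{m-1-r}}$ to $\lVert\Lambda^{-1}(\tilde b\cdot\nabla b)\rVert_{H^m}$ via Lemma~\ref{2.1} and Poincar\'e, integrate in time, and use the equivalence $Q_m\sim\lVert(u,b)\rVert_{H^m}^2$ from the cross-term bound. The only differences are cosmetic (you fix $a$ abstractly and perform the Diophantine substitution after integrating rather than before), and your bookkeeping is consistent.
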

\begin{proof}
Recall the modified energy functional defined in \eqref{lya}:
\begin{align*}
Q_m(t):= a\lVert (u, b)(t) \rVert_{H^m}^2 - \sum_{l = 0}^{m} \int_{\mathbb{T}^n} (\tilde{b} \cdot \nabla b)(t) \cdot \Lambda^{2l-2} u(t) \, dx.
\end{align*}
Multiplying \eqref{2.33'} by $a := 1+\tfrac{|\tilde{b}|}{2}+\tfrac{|\tilde{b}|^2}{2}$
and adding $\frac{1}{2}\eqref{2.333}$, we obtain
\begin{align}\label{12.2}
\frac{1}{2} \frac{d}{dt} Q_m(t)
\leq\; & -a \left\lVert \nabla u \right\rVert_{H^m}^2 - \frac{1}{4} \left\lVert \Lambda^{-1} (\tilde{b} \cdot \nabla b) \right\rVert_{H^m}^2+ \left( \frac{1}{2} + \frac{|\tilde{b}|^2}{2} \right) \left\lVert \nabla u \right\rVert_{H^m}^2 \notag\\
&  + C \lVert (u, b) \rVert_{H^m}^2 \left( \left\lVert \nabla u \right\rVert_{L^{\infty}} + \lVert b \rVert_{H^{m-1-r}}^2 \right) \notag\\
\leq\; & -\frac{1}{4} \left( \left\lVert \nabla u \right\rVert_{H^m}^2 + \left\lVert \Lambda^{-1} (\tilde{b} \cdot \nabla b) \right\rVert_{H^m}^2 \right) + C \lVert (u, b) \rVert_{H^m}^2 \left( \left\lVert \nabla u \right\rVert_{L^{\infty}} + \lVert b \rVert_{H^{m-1-r}}^2 \right).
\end{align}
To further control the nonlinear terms on the right-hand side, we apply Lemma \ref{2.1} together with the Poincar\'e inequality to deduce
\begin{align*}
\|b\|_{H^{m-1-r}}
&\leq c \|\tilde{b} \cdot \nabla b\|_{H^{m-1}}
\leq C \|\tilde{b} \cdot \nabla b\|_{\dot{H}^{m-1}}\notag\\
& = C \left\|\Lambda^{m-1}(\tilde{b} \cdot \nabla b)\right\|_{L^2}= C \left\|\Lambda^{-1}(\tilde{b} \cdot \nabla b)\right\|_{\dot{H}^m} \notag\\
&\leq C \left\lVert \Lambda^{-1} (\tilde{b} \cdot \nabla b) \right\rVert_{H^m},
\end{align*}
so that inequality \eqref{12.2} becomes
\begin{align}\label{12.233'}
&\frac{d}{dt} Q_m(t) + \frac{1}{2} \left( \left\lVert \nabla u \right\rVert_{H^m}^2 + \left\lVert \Lambda^{-1} (\tilde{b} \cdot \nabla b) \right\rVert_{H^m}^2 \right)\notag\\
&\qquad\leq C \lVert (u, b) \rVert_{H^m}^2 \left( \left\lVert \nabla u \right\rVert_{L^{\infty}} + \left\lVert \Lambda^{-1} (\tilde{b} \cdot \nabla b) \right\rVert_{H^m}^2 \right).
\end{align}
Using H\"older, Poincar\'e and Young inequalities, one has
\begin{align*}
\left| \int_{\mathbb{T}^n} (\tilde{b} \cdot \nabla b) \cdot \Lambda^{2l-2} u \, dx \right|
\leq |\tilde{b}| \cdot \lVert \Lambda^l b \rVert_{L^2} \cdot \lVert \Lambda^{l - 1} u \rVert_{L^2} \leq \frac{|\tilde{b}|}{2} \left( \lVert \Lambda^l b \rVert_{L^2}^2 + \lVert \Lambda^l u \rVert_{L^2}^2 \right),
\end{align*}
from which we obtain the bound
\begin{align}\label{12.21}
\left| \sum_{l = 0}^{m} \int_{\mathbb{T}^n} (\tilde{b} \cdot \nabla b) \cdot \Lambda^{2l-2} u \, dx \right|
\leq \frac{|\tilde{b}|}{2} \lVert (u, b) \rVert_{H^m}^2.
\end{align}
Integrating \eqref{12.233'} over \([0, T]\), and using the estimate \eqref{12.21}, we arrive at \eqref{12.24}.
\end{proof}

\subsection{Global-in-time existence}\label{4/2}
In this subsection, we establish the global existence part of Theorem \ref{thm}.
The main difficulty lies in controlling the critical term
\[
\int_0^T \|\nabla u(t)\|_{L^\infty} \, dt,
\]
which appears in the energy inequality \eqref{12.24}. To handle this, we employ spectral analysis to derive suitable upper bounds.
The first step is given by the following proposition.
\begin{prop}\label{prop5.1}
Let \( n \in \mathbb{N} \) with \( n \ge 2 \), and choose \( m \in \mathbb{N} \) so that \( m > \frac{n}{2} \). Assume that $(u, b)$ is a smooth global solution to \eqref{equation} with \( \mu = 1 \) and \( \nu = 0 \).
Then there exists a constant $C > 0$ such that for all $T>0$,
\begin{align}\label{prop1.1}
 \sum_{k \in \mathbb{Z}^n \setminus \{0\}} \int_{0}^{T} |k|\left| \hat{u}(t,k) \right| \, dt
 &\leq C\lVert u_0 \rVert_{H^m}
 + C\sup_{t \in [0, T]} \lVert u \rVert_{H^m}\sum_{k \in \mathbb{Z}^n \setminus \{0\}}\int_{0}^{T} |k| |\hat{u}(t,k)| \, dt\notag\\[1ex]
 &\quad +\left(C\sup_{t \in [0, T]} \lVert b \rVert_{H^m}+C(|\tilde{b}|)\right)\sum_{k \in \mathbb{Z}^n \setminus \{0\}} \int_{0}^{T}| \hat{b}(t,k)| \, dt,
\end{align}
where $C(|\tilde{b}|)$ denotes a constant depending on $|\tilde{b}|$.
\end{prop}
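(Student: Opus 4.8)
The plan is to apply Duhamel's formula to the velocity equation $\eqref{97}_1$ (with $\mu=1$), treating the magnetic coupling $i(\tilde b\cdot k)\hat b(\tau,k)$ and the nonlinearity $\mathbb P\big(\widehat{u\cdot\nabla u}-\widehat{b\cdot\nabla b}\big)(\tau,k)$ as a forcing term, and then to bound the four resulting space--time sums separately using the elementary inequality \eqref{2.3}, Fubini's theorem, and Young's inequality for convolutions on $\ell^1(\mathbb Z^n)$. Concretely, for $k\in\mathbb Z^n\setminus\{0\}$,
\[
\hat u(t,k)=e^{-|k|^2 t}\hat u_0(k)+\int_0^t e^{-|k|^2(t-\tau)}\Big[i(\tilde b\cdot k)\hat b(\tau,k)-\mathbb P\big(\widehat{u\cdot\nabla u}-\widehat{b\cdot\nabla b}\big)(\tau,k)\Big]\,d\tau ,
\]
so that $\sum_{k\ne0}\int_0^T|k|\,|\hat u(t,k)|\,dt\le J_1+J_2+J_3+J_4$ with the obvious four pieces.

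For $J_1$, performing the $t$-integration first and using $\int_0^T|k|e^{-|k|^2 t}\,dt\le|k|^{-1}\le1$ gives $J_1\le\sum_{k\ne0}|\hat u_0(k)|\le C\|u_0\|_{\dot H^m}$ by Cauchy--Schwarz, since $m>\tfrac n2$ makes $\sum_{k\ne0}|k|^{-2m}$ finite. For the linear coupling $J_2$, I apply Fubini in $(t,\tau)$, use $\int_\tau^T|k|e^{-|k|^2(t-\tau)}\,dt\le|k|^{-1}$, and bound $|\tilde b\cdot k|/|k|\le|\tilde b|$, which yields $J_2\le C(|\tilde b|)\sum_{k\ne0}\int_0^T|\hat b(t,k)|\,dt$. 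For the quadratic terms, the divergence-free conditions allow $u\cdot\nabla u=\nabla\cdot(u\otimes u)$ and $b\cdot\nabla b=\nabla\cdot(b\otimes b)$, so $|\widehat{u\cdot\nabla u}(\tau,k)|\le|k|\,|(\hat u*\hat u)(\tau,k)|$ and likewise for $b$; moreover the Fourier symbol $I-\tfrac{k\otimes k}{|k|^2}$ of $\mathbb P$ has operator norm at most $1$, so it can simply be dropped. After Fubini and $\int_\tau^T e^{-|k|^2(t-\tau)}\,dt\le|k|^{-2}$, the two powers of $|k|$ are absorbed and $J_3\le\int_0^T\sum_{k\ne0}|(\hat u*\hat u)(t,k)|\,dt$, $J_4\le\int_0^T\sum_{k\ne0}|(\hat b*\hat b)(t,k)|\,dt$. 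By Young's inequality for convolutions, $\sum_{k\ne0}|(\hat u*\hat u)(t,k)|\le\big(\sum_{k\ne0}|\hat u(t,k)|\big)^2$; taking one factor out as $\sup_{t\in[0,T]}\sum_{k\ne0}|\hat u(t,k)|\le C\sup_{t\in[0,T]}\|u(t)\|_{H^m}$ (Cauchy--Schwarz, $m>\tfrac n2$) and bounding the remaining factor by $\sum_{k\ne0}|k|\,|\hat u(t,k)|$ via Poincar\'e gives the desired $J_3$ bound; $J_4$ is handled identically with $b$ replacing $u$ and produces the $\sup_{t}\|b(t)\|_{H^m}\sum_{k\ne0}\int_0^T|\hat b(t,k)|\,dt$ contribution. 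Adding $J_1,\dots,J_4$ gives \eqref{prop1.1}.

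There is no genuine analytic obstacle here: the estimate is essentially a bookkeeping argument resting on \eqref{2.3}, Fubini, Young's convolution inequality, and the summability $\sum_{k\ne0}|k|^{-2m}<\infty$. The only subtlety worth flagging is the asymmetry caused by the absence of magnetic diffusion: the heat kernel acting on $\hat u$ returns one power of $|k|$ in $J_2$ (spent against $|\tilde b\cdot k|$) and two in $J_3$, $J_4$ (spent against the divergence structure), but it never acts on $\hat b$, so the right-hand side of \eqref{prop1.1} controls only $\sum_{k\ne0}\int_0^T|\hat b(t,k)|\,dt$ and not the stronger quantity carrying a factor $|k|$. Closing the bootstrap therefore requires a companion estimate for $\sum_{k\ne0}\int_0^T|\hat b(t,k)|\,dt$, which is precisely the role of the subsequent Proposition \ref{prop5.2}.
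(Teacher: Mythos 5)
Your proposal is correct and follows essentially the same route as the paper's proof: apply Duhamel to $\eqref{97}_1$, split into the four pieces (initial data, linear coupling $i(\tilde b\cdot k)\hat b$, and the two quadratic terms), use the elementary bound \eqref{2.3} together with Fubini, exploit $|\widehat{\mathbb P f}(k)|\le|\hat f(k)|$ and the divergence structure to extract a factor $|k|$ from the nonlinearity, then apply Young's $\ell^1$-convolution inequality and Cauchy--Schwarz with $m>\tfrac n2$ for one factor and $|k|\ge 1$ for the other. The only cosmetic difference is the labeling of the four terms (you place the linear coupling as $J_2$, the paper as $J_4$); the estimates themselves coincide term by term.
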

\begin{proof}
Applying Duhamel's principle to $\eqref{97}_1$, we obtain
\begin{align}\label{le'}
&\sum_{k \in \mathbb{Z}^n \setminus \{0\}} \int_{0}^{T} |k| |\hat{u}(t,k)| \, dt \leq J_1 + J_2 + J_3 + J_4,
\end{align}
where
\begin{align*}
&J_1:= \sum_{k \in \mathbb{Z}^n \setminus \{0\}} \int_{0}^{T} |k| e^{-|k|^2 t} |\hat{u}_0(k)| \, dt,\notag\\[1ex]
&J_2:= \sum_{k \in \mathbb{Z}^n \setminus \{0\}} \int_{0}^{T} \int_{0}^{t} |k| e^{-|k|^2 (t - \tau)} |\widehat{u \cdot \nabla u}(\tau,k)| \, d\tau dt,\notag\\[1ex]
&J_3 := \sum_{k \in \mathbb{Z}^n \setminus \{0\}} \int_{0}^{T} \int_{0}^{t} |k| e^{-|k|^2 (t - \tau)} |\widehat{b \cdot \nabla b} (\tau,k)| \, d\tau dt,\notag\\[1ex]
&J_4:= \sum_{k \in \mathbb{Z}^n \setminus \{0\}} \int_{0}^{T} \int_{0}^{t} |k| e^{-|k|^2 (t - \tau)} |(\tilde{b} \cdot k) \hat{b} (\tau,k)| \, d\tau dt.
\end{align*}
Here we have used the property that $|\widehat{\mathbb{P} f}(k)| \leq |\hat{f}(k)|$ when estimating $J_2$ and $J_3$.

It is clear that
$$
J_1\leq \sum_{k \in \mathbb{Z}^n \setminus \{0\}} \int_0^T |k|^2 e^{-|k|^2 t} |\hat{u}_0| \, dt \leq \sum_{k \in \mathbb{Z}^n \setminus \{0\}} |\hat{u}_0(k)| \leq C \left\lVert u_0 \right\rVert_{\dot{H}^m}.
$$

For $J_4$, we have
\begin{align*}
J_4 \leq C(|\tilde{b}|) \sum_{k \in \mathbb{Z}^n \setminus \{0\}} \int_{0}^{T} | \hat{b}(t,k) | \, dt.
\end{align*}

For \(J_2\) and \(J_3\), we can deduce
 \begin{align*}
J_2+J_3 &\leq C \sum_{k \in \mathbb{Z}^n \setminus \{0\}} \int_{0}^{T} \left|(\hat{u}*\hat{u}) (t,k)\right| \, d t+C \sum_{k \in \mathbb{Z}^n \setminus \{0\}} \int_{0}^{T}|(\hat{b}*\hat{b}) (t,k)| \, d t
\\[1ex]
&\leq C \sup_{t \in [0, T]} \| u(t) \|_{H^m} \left( \sum_{k \in \mathbb{Z}^n \setminus \{0\}} \int_{0}^{T} |k| |\hat{u} (t,k)| \, dt \right)\\[1ex]
&\quad+C \sup_{t \in [0, T]} \left\lVert b  \right\rVert_{H^m} \left( \sum_{k \in \mathbb{Z}^n \setminus \{0\}} \int_{0}^{T} |\hat{b} (t,k)| \, dt \right).
\end{align*}
 Collecting all the estimates \(J_1\), \(J_2\), \(J_3\) and \(J_4\) leads to \eqref{le'}, which completes the proof of Proposition \ref{prop5.1}.
\end{proof}

In order to close the estimate in Proposition \ref{prop5.1}, it remains to control the term involving
\[\sum_{k \in \mathbb{Z}^n \setminus \{0\}}\int_0^T |\hat{b}(t,k)|\,dt.\]
The following proposition provides the corresponding bound.
\begin{prop}\label{prop5.2}
Let $n \in \mathbb{N}$ with $n \geq 2$ and $m \in \mathbb{N}$ satisfying $m > 3 + 2r + \frac{n}{2}$. Assume that $(u, b)$ is a smooth global solution to \eqref{equation} with \( \mu = 1 \) and \( \nu = 0 \). Then there exists a constant $C > 0$ such that
\begin{align}\label{prop1.2}
 \sum_{k \in \mathbb{Z}^n \setminus \{0\}} \int_{0}^{T}| \hat{b}(t,k)| \, dt&\leq  C\lVert(u_0, b_0) \rVert_{H^m}+C\sup_{t \in [0, T]} \lVert (u, b) \rVert_{H^m}\sum_{k \in \mathbb{Z}^n \setminus \{0\}}\int_{0}^{T} |k| |\hat{u}(t,k)| \, dt\notag\\[1ex]
 &\quad+C\sup_{t \in [0, T]} \lVert (u, b) \rVert_{H^m}\sum_{k \in \mathbb{Z}^n \setminus \{0\}}\int_{0}^{T} |\hat{b}(t,k)| \, dt,
\end{align}
for all $T>0$.
\end{prop}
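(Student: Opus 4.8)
The plan is to use the coupled integral representation, not a parabolic Duhamel formula, because the $b$-equation in \eqref{equation} has no diffusion when $\nu=0$. Applying \eqref{2.4} of Proposition~\ref{pro1} with $\mu=1$, $\nu=0$ yields, for every $k\in\mathbb Z^n\setminus\{0\}$,
\[
|\hat b(t,k)|\le\bigl(|\widehat{K_1}|+|\widehat{K_3}|\bigr)|\hat{\boldsymbol{\psi}}_0|+\int_0^t\bigl(|\widehat{K_1}(t-\tau)|\,|\widehat N(\tau)|+|\widehat{K_3}(t-\tau)|\,|\widehat{N_2}(\tau)|\bigr)\,d\tau,
\]
so that $\sum_{k\neq0}\int_0^T|\hat b(t,k)|\,dt$ splits into a linear part carried by $|\hat{\boldsymbol{\psi}}_0|$ and a nonlinear (Duhamel) part, each split again into a $K_1$ and a $K_3$ contribution. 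In each piece I would first integrate in $t$ by Fubini, using $\int_0^T e^{-at}\,dt\le 1/a$ (a consequence of \eqref{2.3}); by Proposition~\ref{pro2} this produces a gain $|k|^{-2}$ on $S_1\cup S_2$ and from $K_3$, while on $S_3$ only $\int_0^T e^{-(\tilde b\cdot k)^2 t/|k|^2}\,dt\le|k|^2/(\tilde b\cdot k)^2$ is available, which the Diophantine condition \eqref{Diophantine} turns into the weight $c^{-2}|k|^{2+2r}$.

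For the linear part this is immediate: on $S_1\cup S_2$ and from $K_3$ the gain $|k|^{-2}\le1$ together with Cauchy--Schwarz and $\sum_{k\neq0}|k|^{-2m}<\infty$ (valid for $m>\tfrac{n}{2}$) bound those contributions by $C\|(u_0,b_0)\|_{H^m}$, while on $S_3$ the weight $|k|^{2+2r}$ is handled by Cauchy--Schwarz at the cost of $m>2+2r+\tfrac{n}{2}$, which is weaker than the hypothesis. For the nonlinear part I would use the divergence structure granted by $\nabla\cdot u=\nabla\cdot b=0$, namely $N_1=\mathbb{P}\,\nabla\!\cdot(u\otimes u-b\otimes b)$ and $N_2=\nabla\!\cdot(u\otimes b-b\otimes u)$, together with $|\widehat{\mathbb{P} f}(k)|\le|\hat f(k)|$, to obtain
\[
|\widehat N(\tau,k)|\le C|k|\bigl(|\hat u|*|\hat u|+|\hat b|*|\hat b|+|\hat u|*|\hat b|\bigr)(\tau,k),\qquad|\widehat{N_2}(\tau,k)|\le C|k|\,(|\hat u|*|\hat b|)(\tau,k).
\]
After the $t$-integration the extra factor $|k|$ is killed by the $|k|^{-2}$ gain on $S_1\cup S_2$ and from $K_3$, leaving a summable convolution that Young's inequality $\|f*g\|_{\ell^1}\le\|f\|_{\ell^1}\|g\|_{\ell^1}$ and $\sum_k|\hat u|\le C\|u\|_{H^m}$ ($m>\tfrac{n}{2}$) reduce to a product of two $\ell^1$ norms; keeping one factor \emph{inside} $\int_0^T$ and pulling the other out with $\sup_{[0,T]}$ gives the prefactor $\sup_{[0,T]}\|(u,b)\|_{H^m}$, the integrated factor being either $\sum_{k\neq0}\int_0^T|\hat b|\,dt$ directly or $\sum_{k\neq0}\int_0^T|k|\,|\hat u|\,dt$ after the Poincar\'e inequality $\sum_{k\neq0}|\hat u|\le\sum_{k\neq0}|k|\,|\hat u|$.

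The genuine obstacle is the $S_3$ part of the nonlinear $K_1$ term, where no parabolic gain is available: the $t$-integration returns only $|k|^2/(\tilde b\cdot k)^2\lesssim|k|^{2+2r}$, and combined with the derivative intrinsic to the quadratic nonlinearity in divergence form one is left with the weight $|k|^{3+2r}$ on a quadratic convolution $\sum_{k\in S_3}|k|^{3+2r}\bigl(|\hat u|*|\hat u|+|\hat b|*|\hat b|+|\hat u|*|\hat b|\bigr)(\tau,k)$. I would split the weight via $|k|^{3+2r}\lesssim|k-j|^{3+2r}+|j|^{3+2r}$ and apply Young's inequality to concentrate it on one factor, then bound that weighted factor by $\sum_{k\neq0}|k|^{3+2r}|\hat u(k)|\le C\|u\|_{H^m}$ via Cauchy--Schwarz, which needs $\sum_{k\neq0}|k|^{2(3+2r-m)}<\infty$; this is exactly the step forcing the threshold $m>3+2r+\tfrac{n}{2}$ in the statement. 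The complementary factor, once again an $\ell^1$ norm of $\hat u$ or $\hat b$, stays under $\int_0^T$ and furnishes one of the two closing quantities of \eqref{prop1.2}. Collecting the four pieces, taking $\sup_{t\in[0,T]}$ of the $H^m$ prefactors and invoking the Poincar\'e inequality where needed yields \eqref{prop1.2}; aside from this $S_3$ regularity count, the remaining work is routine bookkeeping of which convolution factor carries the high-frequency weight and which stays time-integrated, so that the final inequality matches \eqref{prop1.2} verbatim.
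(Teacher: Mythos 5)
Your proposal is correct and takes essentially the same approach as the paper: it starts from the integral representation \eqref{2.4}$_2$ of Proposition~\ref{pro1}, splits into linear/nonlinear and $K_1$/$K_3$ pieces, applies the kernel bounds of Proposition~\ref{pro2} region by region, Fubinis in $t$, converts the $S_3$ loss into the weight $|k|^{2+2r}$ via the Diophantine condition, exploits the divergence structure of the nonlinearities, and closes with Young's inequality for $\ell^1$ convolutions and Cauchy--Schwarz. The $S_3$-nonlinear accounting you single out (weight $|k|^{3+2r}$ forcing $m>3+2r+\tfrac n2$) is exactly the paper's estimate of $J_6$.
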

\begin{proof}
From $\eqref{2.4}_2$, we obtain
\begin{align}
&\sum_{k \in \mathbb{Z}^n \setminus \{0\}} \int_{0}^{T} |\hat{b}(t,k)| \, dt\notag\\[1ex]
\leq& \sum_{k \in \mathbb{Z}^n \setminus \{0\}} \int_{0}^{T} |\widehat{K_1}||\hat{\boldsymbol{\psi}}_0|\, dt+\sum_{k \in \mathbb{Z}^n \setminus \{0\}} \int_{0}^{T} \int_{0}^{t}
|\widehat{K_1}(t-\tau)||\widehat{N}(\tau)| \, d\tau dt\notag\\[1ex]
&+\sum_{k \in \mathbb{Z}^n \setminus \{0\}} \int_{0}^{T} |\widehat{K_3}||\hat{\boldsymbol{\psi}}_0|  \, dt+\sum_{k \in \mathbb{Z}^n \setminus \{0\}} \int_{0}^{T} \int_{0}^{t}
|\widehat{K_3}(t-\tau)||\widehat{N_2}(\tau)| \, d\tau dt\notag\\[1ex]
=:&J_5 + J_6 + J_7 + J_8.\label{le''}
\end{align}
We first estimate \(J_7\) and \(J_8\). By Proposition \ref{pro2} and \eqref{2.3}, we have
\begin{align*}
J_7 \leq \sum_{k \in \mathbb{Z}^n \setminus \{0\}} \int_0^T e^{-\frac{|k|^2}{2} t} |\hat{\boldsymbol{\psi}}_0(k)| \, dt \leq C \| \boldsymbol{\psi}_0 \|_{H^m}.
\end{align*}
For \(J_8\), we proceed similarly to the estimate for \(J_2\) and use Proposition \ref{pro2} to get
\begin{align*}
J_8
&\leq C \int_0^T \sum_{k \in \mathbb{Z}^n \setminus \{0\}}
|(\hat{b} * \hat{u})(t,k)| \, dt \\[1ex]
&\leq C \int_0^T \left( \sum_{k \in \mathbb{Z}^n \setminus \{0\}} |\hat{b}(t,k)| \right)
\left( \sum_{k \in \mathbb{Z}^n \setminus \{0\}} |\hat{u}(t,k)| \right) dt \\[1ex]
&\leq C \sup_{t \in [0, T]} \| u(t) \|_{H^m} \sum_{k \in \mathbb{Z}^n \setminus \{0\}} \int_0^T |\hat{b}(t,k)| \, dt.
\end{align*}
We now estimate \(J_5\) and \(J_6\).  As the previous estimates, we have
\begin{align*}
J_5
&\leq \sum_{k \in S_1 \cup S_2} \int_0^T |\widehat{K_1}(t,k)|\, |\hat{\boldsymbol{\psi}}_0(k)| \, dt
+ \sum_{k \in S_3} \int_0^T |\widehat{K_1}(t,k)|\, |\hat{\boldsymbol{\psi}}_0(k)| \, dt \\[1ex]
&\leq C \sum_{k \in S_1 \cup S_2} \int_0^T e^{-\frac{|k|^2}{8} t} |\hat{\boldsymbol{\psi}}_0(k)| \, dt
+ C \sum_{k \in S_3} \int_0^T e^{-\frac{|\tilde{b}\cdot k|^2}{|k|^2} t} |\hat{\boldsymbol{\psi}}_0(k)| \, dt \\[1ex]
&\leq C \sum_{k \in S_1 \cup S_2} |\hat{\boldsymbol{\psi}}_0(k)|
+ C \sum_{k \in S_3} \frac{|k|^2}{|\tilde{b} \cdot k|^2} \left( \int_0^T \frac{|\tilde{b} \cdot k|^2}{|k|^2} e^{-\frac{|\tilde{b} \cdot k|^2}{|k|^2} t} dt \right) |\hat{\boldsymbol{\psi}}_0(k)| \\[1ex]
&\leq C \sum_{k \in \mathbb{Z}^n \setminus \{0\}} |\hat{\boldsymbol{\psi}}_0(k)|
+ C \sum_{k \in \mathbb{Z}^n \setminus \{0\}} |k|^{2+2r} |\hat{\boldsymbol{\psi}}_0(k)| \\[1ex]
&\leq C \| \boldsymbol{\psi}_0 \|_{H^m},
\end{align*}
and
\begin{align*}
J_6
&\leq  \sum_{k \in S_1 \cup S_2} \int_0^T \int_0^t |\widehat{K_1}(t-\tau,k)|\, |\widehat{N}(\tau,k)| \, d\tau dt
+ \sum_{k \in S_3} \int_0^T \int_0^t |\widehat{K_1}(t-\tau,k)|\, |\widehat{N}(\tau,k)| \, d\tau dt \\[1ex]
&\leq C \sum_{k \in S_1 \cup S_2} \int_0^T \int_0^t e^{-\frac{|k|^2}{8}(t-\tau)} |\widehat{N}(\tau,k)| \, d\tau dt
+ C \sum_{k \in S_3} \int_0^T |k|^{2+2r} |\widehat{N}(t,k)| \, dt \\[1ex]
&\leq C \sum_{k \in S_1 \cup S_2} \int_0^T \int_0^t e^{-\frac{|k|^2}{8}(t-\tau)} |k|
\left( |\hat{u} * \hat{u}| + |\hat{b} * \hat{b}| + |\hat{u} * \hat{b}| \right)(\tau,k) \, d\tau dt \\[1ex]
&\quad + C \int_0^T \sum_{k \in S_3} |k|^{3+2r}
\left( |\hat{u} * \hat{u}| + |\hat{b} * \hat{b}| + |\hat{u} * \hat{b}| \right)(t,k) \, dt \\[1ex]
&\leq C \int_0^T \sum_{k \in \mathbb{Z}^n \setminus \{0\}} |k|^{3+2r}
\left( |\hat{u} * \hat{u}| + |\hat{b} * \hat{b}| + |\hat{u} * \hat{b}| \right)(t,k) \, dt \\[1ex]
&\leq C \sup_{t \in [0, T]} \| (u, b)(t) \|_{H^m} \left( \int_0^T \sum_k |k| |\hat{u}(t,k)| \, dt + \int_0^T \sum_k |\hat{b}(t,k)| \, dt \right).
\end{align*}
Combining the estimates for \(J_5\) through \(J_8\) in \eqref{le''} completes the proof of Proposition \ref{prop5.2}.
\end{proof}

We are now ready to prove the global existence part of Theorem \ref{thm}.
Recall the energy inequality \eqref{12.24}:
\begin{equation}\label{12.24'}
E_m^2(T) \leq C(|\tilde{b}|)\|(u_0,b_0)\|_{H^m}^2
+ C E_m^4(T)
+ C E_m^2(T) \int_0^T \|\nabla u(t)\|_{L^\infty}\,dt.
\end{equation}
To preclude finite-time blow-up and extend the solution globally, it suffices to control the last term on the right-hand side of \eqref{12.24'}, namely,
\[
\int_0^T \|\nabla u(t)\|_{L^\infty}\,dt.
\]
Using the Fourier representation, we obtain for all \(t \ge 0\),
\[
\|\nabla u(t)\|_{L^\infty}
\leq C \sum_{k\in \mathbb{Z}^n\setminus\{0\}} |k|\,|\hat{u}(t,k)|,
\qquad
\| b(t)\|_{L^\infty}
\leq C \sum_{k\in \mathbb{Z}^n\setminus\{0\}}|\hat{b}(t,k)|.
\]
Hence,
\begin{align*}
\int_0^T \big(\|\nabla u(t)\|_{L^\infty} + \|b(t)\|_{L^\infty}\big)\,dt
\leq C
\sum_{k\in \mathbb{Z}^n\setminus\{0\}} \int_0^T |k|\,|\hat{u}(t,k)|\,dt
+ C\sum_{k\in \mathbb{Z}^n\setminus\{0\}} \int_0^T |\hat{b}(t,k)|\,dt.
\end{align*}
Applying Propositions \ref{prop5.1} and \ref{prop5.2}, we deduce that
\begin{align}\label{prop-appliedy}
 &\sum_{k \in \mathbb{Z}^n \setminus \{0\}} \int_{0}^{T} |k|\left| \hat{u}(t,k) \right| \, dt
+ \sum_{k \in \mathbb{Z}^n \setminus \{0\}} \int_{0}^{T}| \hat{b}(t,k)| \, dt\leq C\|(u_0,b_0)\|_{H^m} \notag\\
&\quad
+ C_1 \sup_{t \in [0,T]} \|(u, b)(t)\|_{H^m}
\left( \sum_{k \in \mathbb{Z}^n \setminus \{0\}}\int_0^T |k|\,|\hat u(t,k)| \, dt
+ \sum_{k \in \mathbb{Z}^n \setminus \{0\}}\int_0^T|\hat b(t,k)| \, dt \right),
\end{align}
for some constant \(C_1>0\).

Taking the initial data sufficiently small as in \eqref{smallconditionq}, the last term on the right-hand side of \eqref{prop-appliedy} can be absorbed into the left-hand side via a standard bootstrap argument. Consequently,
\begin{align}\label{Linftyq}
\int_0^T \|\nabla u(t)\|_{L^\infty}\,dt
\leq \int_0^T \big(\|\nabla u(t)\|_{L^\infty} + \|b(t)\|_{L^\infty}\big)\,dt
\leq C\|(u_0,b_0)\|_{H^m}, \quad \forall\, T>0.
\end{align}
Substituting \eqref{Linftyq} into \eqref{12.24'} and invoking the smallness assumption \eqref{smallconditionq} once again, we infer that
\begin{equation}\label{12.24'y}
E_m^2(T) \leq C(|\tilde{b}|)\|(u_0,b_0)\|_{H^m}^2 + C_2 E_m^4(T),
\end{equation}
for some constant \(C_2>0\).

To close the bootstrap, we make the a priori assumption
\begin{align}\label{(9)}
E_m^2(T) \leq \frac{1}{2C_2}.
\end{align}
Then from \eqref{12.24'y} it follows that
\[
E_m^2(T) \leq C(|\tilde{b}|)E_m^2(0),
\qquad \text{where } E_m(0):=\|(u_0,b_0)\|_{H^m}.
\]
Moreover, if the initial data are sufficiently small such that
\[
E_m^2(0) \leq \frac{1}{4C_2},
\]
then \(E_m^2(T) < \frac{1}{4C_2}\), which is consistent with \eqref{(9)} and thereby closes the bootstrap argument.
This yields
\begin{align}\label{eq:1.2qw}
\sup_{t \in [0, \infty)} \lVert (u, b)(t) \rVert_{H^m}^2
+ \int_0^\infty \lVert \nabla u(t) \rVert_{H^m}^2 \,{d}t
+ \int_0^\infty \lVert b(t) \rVert_{H^{m-1-r}}^2 \, {d}t
\leq C \lVert (u_0, b_0) \rVert_{H^m}^2,
\end{align}
which implies the global-in-time existence of smooth solutions and the uniform bound \eqref{eq:1.2} in Theorem \ref{thm}.

\subsection{Temporal decay estimate}\label{4/3}
In this subsection, we establish the temporal decay estimate stated in Theorem \ref{thm}.
Let $(u, b)$ be a smooth global-in-time solution to \eqref{equation} with \( \mu = 1, \nu = 0 \).

The proof relies on a time-weighted energy method applied to the modified energy functional \(Q_s(t)\) defined in \eqref{lya}.
Following the argument in \eqref{2.33}, for any $s \in [0, m]$ with \( m > 2 + r + \frac{n}{2} \), we have
\begin{align}\label{2.333t}
\frac{1}{2}\frac{d}{dt} \lVert (u, b)(t) \rVert_{H^s}^2 +\lVert \nabla u \rVert_{H^s}^2
\leq C \left( \lVert \nabla u \rVert_{L^{\infty}} + \lVert b \rVert_{H^{m-1-r}}^2 \right) \lVert (u, b)(t) \rVert_{H^s}^2.
\end{align}
Combining \eqref{2.333t} with \eqref{2.333}, we deduce
\begin{align}\label{12.2ab}
\frac{1}{2} \frac{d}{dt} Q_s(t)
\leq\; & -a \left\lVert \nabla u \right\rVert_{H^s}^2 - \frac{1}{4} \left\lVert \Lambda^{-1} (\tilde{b} \cdot \nabla b) \right\rVert_{H^s}^2
+ \left( \frac{1}{2} + \frac{|\tilde{b}|^2}{2} \right) \left\lVert \nabla u \right\rVert_{H^s}^2 \notag\\
& + C \lVert (u, b) \rVert_{H^s}^2 \left( \left\lVert \nabla u \right\rVert_{L^{\infty}} + \lVert b \rVert_{H^{m-1-r}}^2 \right) \notag\\
\leq\; & -\frac{1}{4} \left( \left\lVert \nabla u \right\rVert_{H^s}^2 + \left\lVert \Lambda^{-1} (\tilde{b} \cdot \nabla b) \right\rVert_{H^s}^2 \right)
+ C \lVert (u, b) \rVert_{H^s}^2 \left( \left\lVert \nabla u \right\rVert_{L^{\infty}} + \lVert b \rVert_{H^{m-1-r}}^2 \right).
\end{align}
Applying Lemma \ref{2.1}, Lemma \ref{lem:zero-mean-fractional} and Poincar\'e inequality, we have
\begin{align*}
\left\lVert \Lambda^{-1-r} b \right\rVert_{\dot{H}^s}
&\leq \left\lVert \Lambda^{-1-r} b \right\rVert_{H^s}
\leq c \left\lVert \tilde{b} \cdot \nabla(\Lambda^{-1-r} b) \right\rVert_{H^{s+r}} \\
&= c \left\lVert \Lambda^{-1-r}(\tilde{b} \cdot \nabla b) \right\rVert_{H^{s+r}}
\leq c_1 \left\lVert \Lambda^{-1}(\tilde{b} \cdot \nabla b) \right\rVert_{H^s},
\end{align*}
where \( c \) denoting the constant appearing in \eqref{Diophantine}.

Moreover, by Poincar\'e inequality and \eqref{meanzero2}, we get
\[
\left\lVert u \right\rVert_{H^s} \leq \left\lVert \nabla u \right\rVert_{H^s}.
\]
Then the estimate \eqref{12.2ab} can be refined as
\begin{align}\label{6.4}
\frac{d}{dt} Q_s(t)
\leq -\frac{c^*}{4} \left( \left\lVert u \right\rVert_{H^s}^2 + \left\lVert \Lambda^{-1-r} b \right\rVert_{\dot{H}^s}^2 \right)
+C \lVert (u, b) \rVert_{H^s}^2 \left( \left\lVert \nabla u \right\rVert_{L^{\infty}}+ \lVert b \rVert_{H^{m-1-r}}^2 \right),
\end{align}
where \( c^* := \min\{1, c_1\} \).

Let \( M > 0 \) be a constant to be specified later such that \( \frac{a}{M} \leq 1 \). By Plancherel's theorem, it follows that
\begin{align*}
&\frac{a}{M}\lVert\Lambda^{s}b\rVert_{L^2}^2-\lVert\Lambda^{-1 - r}b\rVert_{\dot{H}^{s}}^2\\[1ex]
=&\sum_{|k|\neq0}\left(\frac{a}{M}|k|^{2s}-|k|^{2s-2-2r}\right)|\hat{b}(k)|^2\\[1ex]
\leq&\frac{a}{M}\sum_{\frac{a}{M}>|k|^{-2 - 2r}}|k|^{2s}|\hat{b}(k)|^2\\[1ex]
=&\left(\frac{a}{M}\right)^{\frac{m - s}{1 + r}}\sum_{\frac{a}{M}>|k|^{-2 - 2r}}\left(\frac{M}{a}\right)^{\frac{m -s}{1 + r}-1}|k|^{2s - 2m+2+2r}|k|^{2m-2-2r}|\hat{b}(k)|^2\\[1ex]
\leq&\left(\frac{a}{M}\right)^{\frac{m -s}{1 + r}}\lVert
\Lambda^{m-1 - r}b\rVert_{L^2}^2\leq\frac{C}{M^{\frac{m - s}{1 + r}}}  \|b\|^2_{H^{m-1-r}}.
\end{align*}
Then, the first term on the right-hand side of \eqref{6.4}  turns to
\begin{align}\label{6.4aab}
&- \frac{c^{*}}{4} \left( \left\lVert u \right\rVert_{H^s}^2 + \left\lVert \Lambda^{-1-r} b \right\rVert_{\dot{H}^s}^2 \right)\notag\\
= &- \frac{c^{*}}{4} \left\lVert u \right\rVert_{H^s}^2 - \frac{c^* a}{4M} \left\lVert \Lambda^s b \right\rVert_{L^2}^2
+ \frac{c^*}{4} \left( \frac{a}{M} \left\lVert \Lambda^s b \right\rVert_{L^2}^2 - \left\lVert \Lambda^{-1 - r} b \right\rVert_{\dot{H}^s}^2 \right) \notag \\
\leq& - \frac{c^* a}{4 M c_0} \left( \left\lVert u \right\rVert_{H^s}^2 + \left\lVert b \right\rVert_{H^s}^2 \right)
+\frac{C}{M^{\frac{m - s}{1 + r}}}\|b\|^2_{H^{m-1-r}} \notag \\
\leq &- \frac{c^*}{8 M c_0}Q_s(t)
+ \frac{C}{M^{\frac{m - s}{1 + r}}}\|b\|^2_{H^{m-1-r}},
\end{align}
where we have used \eqref{12.21} and \( \left\lVert b \right\rVert_{H^s} \leq c_0 \left\lVert \Lambda^s b \right\rVert_{L^2} \), which follows from the Poincar\'e inequality due to  the mean-zero condition \eqref{meanzero2}.

Substituting \eqref{6.4aab} into \eqref{6.4}, we obtain
\begin{align}\label{6.6}
\frac{d}{dt} Q_s(t)
\leq - \frac{c^*}{8 M c_0} Q_s(t)
+ \frac{C}{M^{\frac{m - s}{1 + r}}} \left\lVert b \right\rVert_{H^{m - 1 - r}}^2
+ C \left\lVert (u, b) \right\rVert_{H^s}^2\left( \left\lVert \nabla u \right\rVert_{L^{\infty}}+ \lVert b \rVert_{H^{m-1-r}}^2 \right).
\end{align}
We now take \( M = a + \frac{c^* t}{8 c_0 \frac{m - s}{1 + r}} \) and multiply both sides of \eqref{6.6} by \( M^{\frac{m - s}{1 + r}} \) to deduce
\begin{align*}
\frac{d}{dt} \left( M^{\frac{m - s}{1 + r}} Q_s(t) \right)
\leq C \left\lVert b \right\rVert_{H^{m - 1 - r}}^2
+ C\left( M^{\frac{m - s}{1 + r}} Q_s(t) \right)\left( \left\lVert \nabla u \right\rVert_{L^{\infty}}+ \lVert b \rVert_{H^{m-1-r}}^2 \right).
\end{align*}
 Using Gr\"onwall's inequality and \eqref{eq:1.2qw}, we conclude the decay estimate \eqref{finaldecay}. This completes the proof of Theorem \ref{thm}.

\end{document}